\DeclareSymbolFont{cyrletters}{OT2}{wncyr}{m}{n}
\DeclareMathSymbol{\Sha}{\mathalpha}{cyrletters}{"58}
\newcommand{\pushright}[1]{\ifmeasuring@#1\else\omit\hfill$\displaystyle#1$\fi\ignorespaces}
\newcommand{\pushleft}[1]{\ifmeasuring@#1\else\omit$\displaystyle#1$\hfill\fi\ignorespaces}
\newtheorem{theorem}{Theorem}[section]
\newtheorem{proposition}[theorem]{Proposition}
\newtheorem*{proposition*}{Proposition}
\newtheorem*{theorem*}{Theorem}
\newtheorem*{corollary*}{Corollary}
\theoremstyle{definition}
\newtheorem{example}[theorem]{Example}
\newtheorem{claim}[theorem]{Claim}
\newtheorem{remark}[theorem]{Remark}
\newtheorem{emp}[theorem]{\bf }{\kern -4pt}
\newtheorem*{acknowledgement}{Acknowledgements}
\theoremstyle{theorem}
\newtheorem{conjecture}[theorem]{Conjecture}
\theoremstyle{plain}
\newtheoremstyle{TheoremNum}
{\topsep}{\topsep}              %%% space between body and thm
{\itshape}                      %%% Thm body font
{}                              %%% Indent amount (empty = no indent)
{\bfseries}                     %%% Thm head font
{.}                             %%% Punctuation after thm head
{ }                             %%% Space after thm head
{\thmname{#1}\thmnote{ \bfseries #3}}%%% Thm head spec
\theoremstyle{TheoremNum}
\newtheorem{prorep}{Proposition}
\theoremstyle{remark}
\newcommand{\legendre}[2]{\ensuremath{\left( \frac{#1}{#2} \right) }}
\newcommand{\bd}{400}
\newcommand{\repisogeny}{Section 3}
\title{Congruence properties of the coefficients of the classical modular polynomials}
\author{Haiyang Wang}
\address{}
\date{\today}
\begin{document}
	
	\maketitle
	
	%\begin{abstract}
	%\end{abstract}

	\begin{abstract}
		The classical modular polynomials $\Phi_\ell(X,Y)$ give plane curve models for the modular curves $X_0(\ell)/\mathbb{Q}$ and have been extensively studied. In this article, we provide closed formulas for $\ell$ nontrivial coefficients of the classical modular polynomials $\Phi_\ell(X,Y)$ in terms of the Fourier coefficients of the modular invariant function $j(z)$ for a prime $\ell$. Our interest in the formulas were motivated by our conjectures on congruences modulo powers of the primes $2,3$ and $5$ satisfied by the coefficients of these polynomials. We deduce congruences from these formulas supporting the conjectures.
	\end{abstract}

%v22

% \chapter{Coefficients of the classical modular polynomials}\label{chap_mod}

\section{Introduction}

Let $\ell$ be a prime. Let $\Phi_\ell(X,Y)\in\mathbb{Z}[X,Y]$ be the $\ell$-th classical modular polynomial. Recall that
\begin{equation*}
	\Phi_\ell(X,Y)=X^{\ell+1}+Y^{\ell+1}+\sum\limits_{0\le m,n\le \ell}a_{m,n}X^mY^n,
\end{equation*}
for some $a_{m,n}\in\mathbb{Z}$ with $a_{m,n}=a_{n,m}$. See \cite{Lang}, Ch. 5, Theorem 3 and \cite{SilvermanAd} Theorem II.6.3.

Let $j(z)$ be the modular invariant function (see \cite{SilvermanAd} p. 34). Let $q:=e^{2\pi iz}$ and let
\begin{equation}\label{eq.qexpansion}
	j(z)=\sum\limits_{i=-1}^\infty c_i q^i=\frac{1}{q}+744+196884q+\cdots
\end{equation}
be the Fourier expansion of $j(z)$.	
In this paper, we provide the following closed formulas for $\ell$ nontrivial coefficients of $\Phi_\ell(X,Y)$ in terms of the Fourier coefficients of $j(z)$. Recall that it is well known that $a_{\ell,\ell}=-1$ (see \cite{Lang} Theorem 3, iii) and p. 57).

\medskip

\begin{theorem}\label{thm.form}
	
	Let $\ell\ge 3$ be a prime and $0< m\le\ell$. Let $a_{\ell,\ell-m}$ be the coefficient of $\Phi_\ell(X,Y)$.

	\begin{enumerate}[label=(\roman*)]
		\item\label{it.el_m} 	If $0<m< \ell$, then
		\begin{equation}\label{eq.a_l_l_m}
			a_{\ell,\ell-m}=\sum\limits_{\substack{ t_1 r_1+\cdots +t_\lambda r_\lambda=m\\
					0<r_1<\cdots <r_\lambda,\; 1\le t_i\\
					u:=t_1+\cdots +t_\lambda-1}} (-1)^u\frac{u!}{t_1!\cdots t_\lambda!}\ell\binom{\ell-m+u}{u}c_{r_1-1}^{t_1}\cdots c_{r_\lambda-1}^{t_\lambda}.
		\end{equation}

		\item\label{it.el_0} If $m=\ell$, then 
		\begin{equation}\label{eq.a_l_0}
			a_{\ell,0}=-(\ell+1)c_0+\sum\limits_{\substack{ t_1 r_1+\cdots +t_\lambda r_\lambda=\ell\\
					0<r_1<\cdots <r_\lambda,\; 1\le t_i\\
					u:=t_1+\cdots +t_\lambda-1}} (-1)^u\frac{u!}{t_1!\cdots t_\lambda!}\ell c_{r_1-1}^{t_1}\cdots c_{r_\lambda-1}^{t_\lambda}.
		\end{equation}
		\item\label{it.int} Let $0<m\le \ell$. Assume that $0<r_1<\cdots <r_\lambda$ and $t_i\ge 1$ for $1\le i\le\lambda$ satisfying $ t_1 r_1+\cdots +t_\lambda r_\lambda=m$. Let $ 
		u:=t_1+\cdots +t_\lambda-1$. Then $\frac{u!}{t_1!\cdots t_\lambda!}\ell\binom{\ell-m+u}{u}$ is an integer.
	\end{enumerate}

\end{theorem}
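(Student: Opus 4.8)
The statement to prove is part~\ref{it.int}. The plan is to recognize the quantity $\frac{u!}{t_1!\cdots t_\lambda!}\,\ell\binom{\ell-m+u}{u}$ as $\ell/n$ times a single honest multinomial coefficient, and then settle an elementary divisibility. Set $n:=\ell-m+u+1$. Using $t_1+\cdots+t_\lambda=u+1$ one has $(\ell-m)+t_1+\cdots+t_\lambda=n$, and expanding everything into factorials gives
\begin{equation*}
	\frac{u!}{t_1!\cdots t_\lambda!}\,\ell\binom{\ell-m+u}{u}
	=\frac{\ell\,(\ell-m+u)!}{(\ell-m)!\,t_1!\cdots t_\lambda!}
	=\frac{\ell}{n}\binom{n}{\,\ell-m,\;t_1,\ldots,t_\lambda\,}.
\end{equation*}
Here $\ell-m\ge 0$ because $m\le\ell$, the $t_i$ are positive, and $n\ge 1$ because $u\ge 0$ (as $\lambda\ge 1$), so the displayed multinomial coefficient is a genuine positive integer. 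Thus it suffices to show that $n$ divides $\ell\binom{n}{\ell-m,t_1,\ldots,t_\lambda}$.

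For this I would combine two elementary facts. First, writing $C:=\binom{n}{\ell-m,t_1,\ldots,t_\lambda}$, for each nonzero part $a$ among $\ell-m,t_1,\ldots,t_\lambda$ the multinomial recurrence gives $aC=n\binom{n-1}{\ldots}$, where the remaining coefficient is obtained by lowering that part by one; hence $n\mid aC$. Since $n$ divides $aC$ for every nonzero part $a$, it divides their greatest common divisor, which equals $gC$ with $g:=\gcd(\ell-m,t_1,\ldots,t_\lambda)$ — here one uses $\gcd(c x_1,\ldots,c x_k)=c\,\gcd(x_1,\ldots,x_k)$ with $c=C>0$. Second, $g\mid\ell$: indeed $g$ divides every $t_i$, hence $g$ divides $t_1 r_1+\cdots+t_\lambda r_\lambda=m$, while also $g\mid\ell-m$, so $g$ divides $m+(\ell-m)=\ell$.

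Putting these together, since $g\mid\ell$ we get $gC\mid\ell C$, and since $n\mid gC$ we conclude $n\mid\ell C$; hence $\frac{\ell}{n}C$ is an integer, which is exactly the claim. The boundary case $m=\ell$ (so $\ell-m=0$) is covered by simply omitting that part, and one should record $\lambda\ge 1$ so that $u\ge 0$; notice that in this argument the ordering $0<r_1<\cdots<r_\lambda$ and even the primality of $\ell$ play no role. I expect the only real obstacle to be the opening move — spotting that the sum of the ``parts'' $\ell-m,t_1,\ldots,t_\lambda$ is exactly $\ell-m+u+1=n$, so that the expression is $\ell/n$ times a bona fide multinomial coefficient; everything after that is standard divisibility bookkeeping.
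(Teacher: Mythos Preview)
Your argument for part~\ref{it.int} is correct. The key identity
\[
\frac{u!}{t_1!\cdots t_\lambda!}\,\ell\binom{\ell-m+u}{u}=\frac{\ell}{n}\binom{n}{\ell-m,t_1,\ldots,t_\lambda},\qquad n:=\ell-m+u+1,
\]
is verified as you say, and the divisibility chain $n\mid aC$ for each nonzero part $a$, hence $n\mid gC$ with $g=\gcd(\text{nonzero parts})$, together with $g\mid\ell$, gives $n\mid\ell C$. The edge case $m=\ell$ is handled, and you are right that neither the ordering of the $r_i$ nor the primality of $\ell$ is used.

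This is a genuinely different route from the paper. The paper does not attack the integrality directly: instead it defines $d_{r_1,\ldots,r_\lambda}^{t_1,\ldots,t_\lambda}$ to be (up to sign) the quantity in question, and in the course of proving parts~\ref{it.el_m} and~\ref{it.el_0} derives a recursion (Equation~\eqref{eq.coeff}) expressing each $d$ as an integer linear combination of $d$'s with strictly smaller $\sum t_{i,1}r_i$; since the base value $d_{r_1,\ldots,r_\lambda}^{0,\ldots,0}=-1$ is an integer, integrality follows by induction. Your argument is self-contained and more elementary, requiring no input from the proof of the closed formulas, whereas the paper's argument is essentially free once the recursion has been set up for other purposes.
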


\medskip

The proof of Theorem \ref{thm.form} is found in Section \ref{em.proof_mod}. Yui introduced an algorithm for computing $\Phi_\ell(X,Y)$ using the coefficients of the $j$-function in \cite{Yui}. However, the paper does not deduce closed formulas for the coefficients of $\Phi_\ell(X,Y)$. Different algorithms for computing $\Phi_\ell(X,Y)$ are found in \cite{BLS}, \cite{Charles-Lauter}, and \cite{Ito_Alg}.

Our interest in the formulas was motivated by our conjecture below on congruences modulo powers of the primes $2,3$ and $5$ satisfied by the coefficients of $\Phi_\ell(X,Y)$.

\begin{conjecture}\label{conj.div}
	Let $\ell$ be a prime and let $a_{m,n}$ be the coefficient of $\Phi_\ell(X,Y)$. Assume that $\ell+1>m+n$ and let $c:=\ell+1-m-n$. Then the following is true.
	\begin{enumerate}[label=(\alph*)]
		\item If $\ell\ne 2$, then $a_{m,n}\equiv 0 \operatorname{mod} 2^{15c}$.
		\item If $\ell\ne 3$, then $a_{m,n}\equiv 0\operatorname{mod} 3^{3c}$. Moreover, if $\ell\equiv 1\operatorname{mod} 3$, then
		$a_{m,n}\equiv 0\operatorname{mod} 3^{\lceil\frac{9}{2}c\rceil}$.
		\item If $\ell\ne 5$, then $a_{m,n}\equiv 0\operatorname{mod} 5^{3c}$.
	\end{enumerate}
\end{conjecture}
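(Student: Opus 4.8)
The plan is to fix a prime $p\in\{2,3,5\}$ and an auxiliary prime $\ell\neq p$, and to bound the $p$-adic valuation $v_p(a_{m,n})$ from below by a quantity linear in $c=\ell+1-m-n$. Theorem~\ref{thm.form} only reaches the edge coefficients $a_{\ell,\ell-m}$, and among those only $a_{\ell,0}$ (where $c=1$) falls under the hypothesis $\ell+1>m+n$; so the first step would be to produce an explicit formula for an arbitrary interior coefficient. For this I would start from the factorization
\begin{equation*}
	\Phi_\ell(X,j(z))=\bigl(X-j(\ell z)\bigr)\prod_{k=0}^{\ell-1}\Bigl(X-j\bigl(\tfrac{z+k}{\ell}\bigr)\Bigr),
\end{equation*}
so that the coefficient of $X^{m}$ is $(-1)^{\ell+1-m}e_{\ell+1-m}$ of the $\ell+1$ conjugate Hauptmoduls, a $q$-series with integer coefficients invariant under $z\mapsto z+1$. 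Matching this against $\sum_{n}a_{m,n}j(z)^n$ and inverting — an iterated Lagrange inversion of the same flavour as the one behind Theorem~\ref{thm.form} — should express each $a_{m,n}$ as an explicit $\mathbb{Z}$-linear combination of monomials $\prod_i c_{s_i}^{t_i}$ in the Fourier coefficients of $j$, carrying multinomial prefactors.

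The arithmetic input would then be the classical Lehner–Kolberg congruences for the Fourier coefficients $c_i$ of $j$ modulo powers of $2,3,5$ (for instance $v_2(c_{2^{a}u})\ge 3a+8$, $v_3(c_{3^{a}u})\ge 2a+3$ and $v_5(c_{5^{a}u})\ge a+1$ for $a\ge 1$, $u$ prime to $p$), combined with Kummer's theorem for the $p$-adic valuation of the multinomial prefactors. Summing over the partitions indexing the formula, one would try to show the minimal valuation attained is at least $15c$, $3c$, $3c$ respectively. The refinement $3^{\lceil 9c/2\rceil}$ when $\ell\equiv 1\pmod 3$ I would attribute to the elliptic point $j=0$, with complex multiplication by $\mathbb{Z}[\zeta_3]$: since $\ell\equiv 1\pmod 3$ is precisely the splitting condition in $\mathbb{Q}(\sqrt{-3})$, the extra $\ell$-isogenies emanating from curves with $j$-invariant near $0$ should force an additional $3$-adic vanishing that raises the slope from $3$ to $9/2$, and I would try to isolate it through the local behaviour of the above $q$-series at the cusp after twisting by a suitable power of $E_4$.

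The main obstacle is that the claimed divisibility vastly exceeds anything a term-by-term valuation estimate can deliver, so the decisive phenomenon is cancellation, not size. This is already visible on the boundary: for $\ell=3$ one has $a_{3,0}=2^{15}\cdot 3^{2}\cdot 5^{3}$, yet the leading term $-(\ell+1)c_0=-4\cdot 744$ of the formula in Theorem~\ref{thm.form}\ref{it.el_0} has $2$-adic valuation only $5$, so ten of the fifteen powers of $2$ arise purely from cancellation among the partition terms. Consequently a proof cannot rest on the closed formula alone; the cancellation must be captured structurally. I would attempt this by reinterpreting $\sum_n a_{m,n}j^n=\pm e_{\ell+1-m}(\{j_k\})$ through modular forms modulo $p$, exploiting $E_4\equiv 1\pmod{(2^4,3,5)}$ and $E_6\equiv 1\pmod{(2^3,3^2)}$ together with $j=E_4^3/\Delta$ and $j-1728=E_6^2/\Delta$, to show that modulo the asserted power of $p$ the symmetric function collapses onto the anti-diagonal $m+n=\ell+1$ and the two edges.

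I should stress that Conjecture~\ref{conj.div} is stated as a conjecture precisely because this cancellation has not been established in general. The formulas of Theorem~\ref{thm.form} do let one verify the boundary cases $c=1$, such as $a_{\ell,0}$, and thereby accumulate the supporting congruences promised in the abstract; but pinning down the exact linear constants $15,3,3$ and the slope $9/2$ for \emph{all} interior coefficients — that is, proving that the cancellation is as strong and as uniform as predicted — is the crux that I expect to remain the essential difficulty.
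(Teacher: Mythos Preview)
Your assessment is accurate: Conjecture~\ref{conj.div} is stated in the paper as a conjecture and is not proved there. The paper offers no argument for the general statement; instead it assembles supporting evidence --- computational verification for all primes $\ell\le 400$ (Proposition~\ref{prop.verif}), the edge congruences of Propositions~\ref{prop.2Div} and~\ref{prop.3Div} derived from Theorem~\ref{thm.form}, and the result on $a_{0,0}$ in Proposition~\ref{prop.a_0_0_2} obtained via CM $j$-invariants and the Gross--Zagier description of their valuations. Your final paragraph correctly anticipates this: the formulas reach only the boundary $c=1$ cases, and the interior remains open.

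Your sketch goes further than the paper attempts, and the strategy you outline --- expressing interior $a_{m,n}$ through elementary symmetric functions of the $\ell+1$ Hauptmoduls, then feeding in Lehner--Kolberg --- is a natural line of attack the paper does not pursue. But you have also put your finger on exactly why it stalls: the target valuations (e.g.\ $2^{15c}$) are far larger than what any termwise bound on $\prod c_{s_i}^{t_i}$ can supply, so the phenomenon is governed by cancellation among the partition terms rather than by the size of any one of them. Your example with $a_{3,0}$ makes this vivid. The paper sidesteps this obstacle for $a_{0,0}$ by abandoning the $q$-expansion approach entirely and instead evaluating $\Phi_\ell(j,j)=0$ at a CM point whose $j$-invariant already carries the required $p$-adic valuation (Propositions~\ref{prop.a_0_0} and~\ref{prop.a_0_0_2}); this gives the single coefficient $a_{0,0}$ but does not propagate to general $(m,n)$. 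So neither the paper's methods nor your proposed extension currently closes the gap, and your closing caveat is the honest summary.
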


\medskip

Related congruences for the coefficients of $\Phi_\ell(X,Y)$ of the form $a_{\ell,\ell-m}$ are found in Proposition \ref{prop.2Div}, Proposition \ref{prop.3Div}, and Conjecture \ref{conj.a_l_m_5Div}.

We have verified that Conjecture \ref{conj.div} is true for all primes $\ell< 400$ (see Proposition \ref{prop.verif}), through examining the explicit computations of the modular polynomials obtained by Sutherland \cite{Sutherland}.

\begin{remark}
	Computations suggest that Conjecture \ref{conj.div} is sharp. For instance, computations suggest that for any given $\ell$, there exists $m,n\ge 0$ with $c:=\ell+1-m-n>0$ such that $\operatorname{ord}_2(a_{m,n})=15c$.
\end{remark}

Theorem \ref{thm.form} can be applied to yield the following congruences, which provide evidence supporting Conjecture \ref{conj.div} when taking $m=\ell$.
\medskip

\begin{prorep}[\ref{prop.2Div}]
	Let $\ell$ be an odd prime and let $0<m\le \ell$ be an integer. Let $a_{\ell,\ell-m}$ be the coefficient of the $\ell$-th classical modular polynomial $\Phi_\ell(X,Y)$. Then

	\begin{enumerate}[label=(\roman*)]
		\item If $m\equiv 4\operatorname{mod} 8$, then $a_{\ell,\ell-m}\equiv 0\operatorname{mod} 2$.
		\item If $m\equiv 2\operatorname{mod} 4$, then $a_{\ell,\ell-m}\equiv 0\operatorname{mod} 2^2$.
		\item If $m\equiv 1\operatorname{mod} 8$, then $a_{\ell,\ell-m}\equiv 0\operatorname{mod} 2^3$.
		\item If $m\equiv 5\operatorname{mod} 8$, then 
		$a_{\ell,\ell-m}\equiv 0\operatorname{mod} 2^4$.
		\item If $m\equiv 3\operatorname{mod} 4$, then $a_{\ell,\ell-m}\equiv 0\operatorname{mod} 2^5$. 
	\end{enumerate}

\end{prorep}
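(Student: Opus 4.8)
The plan is to estimate the $2$-adic valuation of each individual summand in the formulas of Theorem~\ref{thm.form} and show it is at least the asserted power of $2$; since those formulas have only finitely many summands, this proves the proposition. One uses Theorem~\ref{thm.form}\ref{it.el_m} when $0<m<\ell$ and Theorem~\ref{thm.form}\ref{it.el_0} when $m=\ell$. Because $\ell$ is odd and the coefficients $\frac{u!}{t_1!\cdots t_\lambda!}$ and $\binom{\ell-m+u}{u}$ are non-negative integers (the latter since $0\le u\le \ell-m+u$), a summand attached to data $0<r_1<\cdots<r_\lambda$, $t_i\ge 1$, $\sum_i t_ir_i=m$ has $2$-adic valuation at least $\sum_i t_i\operatorname{ord}_2(c_{r_i-1})$; the same bound holds for the summands of \ref{it.el_0}, where one must additionally dispose of the explicit term $-(\ell+1)c_0$. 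Everything is thus reduced to lower bounds for $\operatorname{ord}_2(c_n)$.

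The first ingredient is therefore a package of $2$-adic congruences for the Fourier coefficients of $j$. Writing $j=E_4^3/\Delta$ and using $E_4\equiv 1\pmod{16}$ (as $240=16\cdot 15$), hence $E_4^3\equiv1\pmod{16}$, one gets $qj\equiv\prod_{n\ge 1}(1-q^n)^{-24}\pmod{16}$; analysing this product modulo $2$, $4$ and $8$ — via $(1-x)^{24}\equiv(1-x^8)^3\pmod 2$, $(1-x)^4\equiv(1-x^2)^2\pmod 4$ and $(1-x)^8\equiv(1-x^2)^4\pmod 8$ — shows that $c_n\equiv 0\pmod 2$ unless $n\equiv 7\pmod 8$, that $c_n\equiv 0\pmod 4$ unless $n\equiv 3\pmod 4$, and that $c_n\equiv 0\pmod 8$ whenever $n$ is even. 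Together with $\operatorname{ord}_2(c_0)=3$ and Lehner's congruence $\operatorname{ord}_2(c_n)\ge 3\operatorname{ord}_2(n)+8$ for even $n>0$, this yields $\operatorname{ord}_2(c_{r-1})\ge w(r)$, where $w(r)$ depends only on $r\bmod 8$:
\begin{equation*}
w(1)=3,\qquad w(2)=w(6)=2,\qquad w(3)=w(7)=11,\qquad w(4)=1,\qquad w(5)=14,\qquad w(r)=0 \text{ for } 8\mid r,
\end{equation*}
and moreover $\operatorname{ord}_2(c_{r-1})\ge 17$ when $r\equiv 1\pmod 8$ with $r\ge 9$.

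Now fix a summand. In each of the five cases $m\not\equiv 0\pmod 8$, so the set $B:=\{i:8\nmid r_i\}$ is nonempty, and since $\sum_{i\notin B}t_ir_i\equiv 0\pmod 8$ the residues $r_i\bmod 8$ with $i\in B$, counted with multiplicity $t_i$, form a multiset of elements of $\{1,\dots,7\}$ summing to $\equiv m\pmod 8$, while the valuation of the summand is at least $\sum_{i\in B}t_iw(r_i)$. It then remains to check the purely combinatorial fact that the minimum of $\sum_i t_iw(\rho_i)$, over multisets $\{\rho_i\}\subseteq\{1,\dots,7\}$ with $\sum_i t_i\rho_i\equiv m\pmod 8$, is at least $1$, $2$, $3$, $4$, $5$ when $m$ is, respectively, $\equiv 4$, $\equiv 2$ or $6$, $\equiv 1$, $\equiv 5$, $\equiv 3$ or $7$ modulo $8$ — exactly matching parts (i)--(v). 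When the target residue of $m$ is $4$, $2$, $6$ or $1$ this is immediate, the minimum being realised by the single residue $4$, $2$, $6$ or $1$. When it is $3$, $5$ or $7$ the multiset must contain an odd residue; if that residue is one of $3,5,7$ we already gain $w=11$, whereas if it is $1$ (weight $3$) the even residues present must have weighted sum $\equiv 2$, $4$, $6\pmod 8$, which costs a further $\ge 2$, $\ge 1$, $\ge 2$ (note a single residue $4$ realises $4\pmod 8$ but neither $2$ nor $6\pmod 8$), for totals $\ge 5$, $\ge 4$, $\ge 5$; and if the index $1$ occurs with multiplicity $\ge 3$, or an index $\equiv 1\pmod 8$ larger than $1$ occurs, the bound follows a fortiori.

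Finally, when $m=\ell$ — which is odd, hence relevant only to parts (iii)--(v) — one argues with Theorem~\ref{thm.form}\ref{it.el_0}: the summands other than $-(\ell+1)c_0$ satisfy the bound above and are treated identically, while $\operatorname{ord}_2\big((\ell+1)c_0\big)=3+\operatorname{ord}_2(\ell+1)$ is $\ge 4$ if $\ell\equiv 1\pmod 4$ and $\ge 5$ if $\ell\equiv 3\pmod 4$. I expect two points to require genuine care: first, pinning down the $2$-adic congruences for $c_n$ in every residue class modulo $8$ — classical, but the analysis of $\prod(1-q^n)^{-24}$ modulo $8$ and the use of Lehner's sharper $2$-power bound must be done carefully; and second, the combinatorial verification for $m\equiv 3,5,7\pmod 8$, where the asserted exponent is attained not by the leading summand $\ell c_{m-1}$ (whose valuation is far larger) but by summands dominated by the low-valuation coefficient $c_0$, so that the crux is to show the class of $m$ modulo $8$ always forces such a summand to absorb enough extra valuation — from a high multiplicity, or from an unavoidable even-index factor $c_{r-1}$ — to reach the stated power of $2$.
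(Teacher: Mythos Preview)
Your approach is essentially the paper's: bound the $2$-adic valuation of each summand in Theorem~\ref{thm.form} via congruences for the $c_n$, then do a case analysis on $m\bmod 8$; the paper cites Kolberg for the odd-$n$ congruences that you derive directly from $qj\equiv\prod(1-q^n)^{-24}\pmod{16}$, and organises the case split less systematically than your weight function $w$, but the substance is identical.

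One slip to fix: the quantity $\frac{u!}{t_1!\cdots t_\lambda!}$ is \emph{not} in general an integer (take $t_1=t_2=2$, giving $3!/(2!\,2!)=3/2$), so you cannot separately assert integrality of the two factors. What your argument actually needs --- and what Theorem~\ref{thm.form}\ref{it.int} supplies --- is that the full coefficient $\frac{u!}{t_1!\cdots t_\lambda!}\,\ell\binom{\ell-m+u}{u}$ is an integer; with that in hand your lower bound $\operatorname{ord}_2(\text{summand})\ge\sum_i t_i\,\operatorname{ord}_2(c_{r_i-1})$ is valid and the rest of the argument goes through unchanged.
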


\medskip

\begin{prorep}[\ref{prop.3Div}]
	Let $\ell$ be a prime and let $0<m\le \ell$ be an integer. Let $a_{\ell,\ell-m}$ be the coefficient of the $\ell$-th classical modular polynomial $\Phi_\ell(X,Y)$. Then
	\begin{enumerate}[label=(\roman*)]
		\item If $m\equiv 1\operatorname{mod} 3$, then $a_{\ell,\ell-m}\equiv 0\operatorname{mod} 3$.
		\item If $m\equiv 2\operatorname{mod} 3$, then 
		$a_{\ell,\ell-m}\equiv 0\operatorname{mod} 3^2$.
	\end{enumerate}
\end{prorep}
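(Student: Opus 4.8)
The plan is to extract $a_{\ell,\ell-m}$ from Theorem~\ref{thm.form} and to bound the $3$-adic valuation of each summand. By part~\ref{it.int} of Theorem~\ref{thm.form}, for any admissible data $0<r_1<\cdots<r_\lambda$, $t_i\ge 1$ with $\sum_i t_ir_i=m$ and $u=\sum_i t_i-1$, the number $\frac{u!}{t_1!\cdots t_\lambda!}\ell\binom{\ell-m+u}{u}$ is an integer, so its $3$-adic valuation is $\ge 0$ and it cannot lower the valuation of a summand. Consequently, looking at \eqref{eq.a_l_l_m} if $m<\ell$ and at \eqref{eq.a_l_0} if $m=\ell$, it is enough to prove: $\operatorname{ord}_3\!\bigl(c_{r_1-1}^{t_1}\cdots c_{r_\lambda-1}^{t_\lambda}\bigr)\ge 1$ for all admissible data when $m\equiv 1\pmod 3$; that this quantity is $\ge 2$ when $m\equiv 2\pmod 3$; and that the extra term $-(\ell+1)c_0$ appearing in \eqref{eq.a_l_0} satisfies the corresponding bound. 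For $\ell=2$, where Theorem~\ref{thm.form} does not apply, both statements are checked directly from the explicit polynomial $\Phi_2$.

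The arithmetic input I would record first is a pair of congruences for the Fourier coefficients $c_n$ of $j$ (both are classical, and also follow from Lehner's congruences): \textbf{(a)} $3\mid c_n$ whenever $n\ge 0$ and $n\not\equiv 2\pmod 3$; and \textbf{(b)} $9\mid c_n$ whenever $n\equiv 1\pmod 3$. For (a): since $240=2^4\cdot 3\cdot 5$, writing $E_4=1+3h$ with $h\in q\mathbb{Z}[[q]]$ gives $j-\Delta^{-1}=(E_4^3-1)\Delta^{-1}\in 3\mathbb{Z}[[q]]$, so $j\equiv\Delta^{-1}\pmod 3$; and $\Delta^{-1}=q^{-1}\prod_{k\ge1}(1-q^k)^{-24}\equiv q^{-1}\prod_{k\ge1}(1-q^{3k})^{-8}\pmod 3$ is supported on exponents $\equiv 2\pmod 3$. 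For (b): from $E_4=1+3h$ one gets $E_4^3=1+9h(1+3h+3h^2)$, hence $j-\Delta^{-1}\in 9\mathbb{Z}[[q]]$ and $j\equiv\Delta^{-1}\pmod 9$. Using Jacobi's identity $\prod_{k\ge1}(1-q^k)^3=\sum_{n\ge0}(-1)^n(2n+1)q^{n(n+1)/2}$, together with the facts that $n(n+1)/2\equiv 1\pmod 3$ precisely when $n\equiv 1\pmod 3$ and that $3\mid 2n+1$ in that case, one may write $\eta^3=q^{1/8}(F+3G)$ with $F\in 1+q^3\mathbb{Z}[[q^3]]$ and $G\in q\mathbb{Z}[[q^3]]$. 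Then $\Delta=\eta^{24}=q(F+3G)^8\equiv qF^8+6qF^7G\pmod 9$, and inverting, $\Delta^{-1}\equiv q^{-1}F^{-8}-6q^{-1}F^{-9}G\pmod 9$; since $q^{-1}F^{-8}$ is supported on exponents $\equiv 2\pmod 3$ and $q^{-1}F^{-9}G$ on exponents $\equiv 0\pmod 3$, the coefficient of $q^n$ in $\Delta^{-1}$ is divisible by $9$ whenever $n\equiv 1\pmod 3$, which gives (b).

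With (a) and (b) in hand, the combinatorial step is short. Fix admissible data and reduce $\sum_i t_ir_i=m$ modulo $3$: the pairs with $3\mid r_i$ contribute $0$, so $\sum_{i:\,3\nmid r_i}t_ir_i\equiv m\pmod 3$. If $m\equiv 1\pmod 3$, this sum is nonzero, so some index $j$ has $3\nmid r_j$; then $r_j-1\not\equiv 2\pmod 3$, so $3\mid c_{r_j-1}$ by (a), whence $3\mid c_{r_1-1}^{t_1}\cdots c_{r_\lambda-1}^{t_\lambda}$. If $m\equiv 2\pmod 3$, set $S=\sum_{i:\,3\nmid r_i}t_i$; then $S\ge 1$, since $S=0$ would force $3\mid m$. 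If $S\ge 2$, then $\operatorname{ord}_3\!\bigl(c_{r_1-1}^{t_1}\cdots c_{r_\lambda-1}^{t_\lambda}\bigr)\ge S\ge 2$ by (a). If $S=1$, there is a unique index $j$ with $3\nmid r_j$, and it has $t_j=1$ with $r_j\equiv\sum_{i:\,3\nmid r_i}t_ir_i\equiv 2\pmod 3$; hence $r_j-1\equiv 1\pmod 3$ and $r_j-1\ge 1$, so $9\mid c_{r_j-1}$ by (b), and a fortiori $9\mid c_{r_1-1}^{t_1}\cdots c_{r_\lambda-1}^{t_\lambda}$. Finally, for the extra term $-(\ell+1)c_0$ present in \eqref{eq.a_l_0} when $m=\ell$: $c_0=744$ is divisible by $3$, which covers part~(i); and if $m=\ell\equiv 2\pmod 3$ then $3\mid\ell+1$, so $9\mid(\ell+1)c_0$, which covers part~(ii). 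This treats every prime $\ell\ge 3$ uniformly, including $\ell=3$, where the factor $\ell$ in the formulas only strengthens the estimates.

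The step I expect to be the main obstacle is congruence (b): in contrast with the one-line reduction modulo $3$ behind (a), it requires the refinement $E_4^3\equiv 1\pmod 9$ together with the $\eta$-product computation sketched above (or a direct appeal to the classical theory of congruences for the coefficients of $j$). Once (a) and (b) are in place, the remainder is routine bookkeeping with the formulas of Theorem~\ref{thm.form}.
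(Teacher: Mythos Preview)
Your proof is correct and follows essentially the same approach as the paper's: both apply Theorem~\ref{thm.form}, use the integrality statement in part~\ref{it.int} to reduce to a $3$-adic lower bound on the product $c_{r_1-1}^{t_1}\cdots c_{r_\lambda-1}^{t_\lambda}$, and finish with the same elementary case analysis on the residues of the $r_i$ modulo~$3$. The only substantive difference is in the arithmetic input: the paper quotes Lehner and Kolberg to obtain the stronger facts $3^5\mid c_n$ for $n\equiv 0\pmod 3$ (with $n\ge 1$) and $3^3\mid c_n$ for $n\equiv 1\pmod 3$, whereas you prove from scratch the weaker (but exactly sufficient) statements $3\mid c_n$ for $n\not\equiv 2\pmod 3$ and $9\mid c_n$ for $n\equiv 1\pmod 3$ via $E_4^3\equiv 1\pmod 9$ and the Jacobi identity for $\eta^3$. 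Your self-contained derivation is a nice touch, and your explicit treatment of $\ell=2$ (which lies outside the hypotheses of Theorem~\ref{thm.form}) is a detail the paper's proof leaves implicit.
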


\begin{remark}
	Computations seem to suggest that in general $a_{\ell,\ell-m}$ is not divisible by $2$ when $m\equiv 0\operatorname{mod} 8$ and not divisible by $3$ when $m\equiv 0\operatorname{mod} 3$.
\end{remark}

Further evidence for Conjecture \ref{conj.div} is provided by Proposition \ref{prop.a_0_0_2}, which proves in particular that the coefficient $a_{0,0}$ of $\Phi_\ell(X,Y)$ is always divisible by $2^{15}3^3 5^3$ for any prime $\ell>5$.

Our initial interest in formulating Conjecture \ref{conj.div} came from considerations about elliptic curves and is discussed briefly at the end of Section \ref{sec.modularpoly}.

\begin{acknowledgement}
	Much of this work is contained in the author’s doctoral thesis at the University of Georgia. The author would like to thank his advisor, Dino Lorenzini, for his very helpful guidance and feedback. The author also thanks Pete Clark for his suggestions on Propositions \ref{prop.a_0_0} and \ref{prop.a_0_0_2}.
\end{acknowledgement}

\section{Coefficients of the modular polynomials}\label{sec.modularpoly}

In this section, we prove Theorem \ref{thm.form} and deduce some congruences satisfied by the coefficients of the modular polynomials from the theorem.

\begin{proposition}\label{prop.verif}
	Conjecture \ref{conj.div} is true for $\ell\le \bd$.
\end{proposition}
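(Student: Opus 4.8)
This statement is a finite computational verification, and the plan is to carry it out directly from explicitly known modular polynomials. Note first that Theorem \ref{thm.form} by itself supplies only the $\ell$ coefficients of the form $a_{\ell,\ell-m}$, so for the full statement of Conjecture \ref{conj.div} the entire polynomial $\Phi_\ell(X,Y)$ is needed. Since Conjecture \ref{conj.div} concerns only prime $\ell$, and since for $\ell\in\{2,3,5\}$ exactly one of the three assertions (a), (b), (c) is excluded by hypothesis while $\Phi_2$, $\Phi_3$, $\Phi_5$ are given by short explicit formulas, those three primes are handled by direct inspection of the (few, and small) coefficients. For each remaining prime $5<\ell\le 400$ I would take the explicit integer polynomial $\Phi_\ell(X,Y)$ from Sutherland's tables \cite{Sutherland}.

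For a fixed such $\ell$ the plan is then: loop over all pairs $(m,n)$ with $0\le m\le n\le\ell$ and $m+n\le\ell$; set $c:=\ell+1-m-n\ge 1$; and verify $\operatorname{ord}_2(a_{m,n})\ge 15c$, $\operatorname{ord}_3(a_{m,n})\ge 3c$, and $\operatorname{ord}_5(a_{m,n})\ge 3c$, together with the sharper bound $\operatorname{ord}_3(a_{m,n})\ge\lceil\tfrac{9}{2}c\rceil$ when $\ell\equiv 1\pmod 3$. By the symmetry $a_{m,n}=a_{n,m}$ it is enough to treat $m\le n$, and by the hypothesis $\ell+1>m+n$ the relevant pairs $(m,n)$ range over a triangle with roughly $\ell^2/4$ lattice points; each individual check is merely a $p$-adic valuation of an explicit integer, which is immediate in any computer algebra system.

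The main obstacle is the sheer size of the input data rather than any conceptual difficulty. There are $78$ primes below $400$, and for $\ell$ near the top of this range $\Phi_\ell$ has on the order of $\ell^2/2$ coefficients whose bit-lengths grow roughly like $\ell\log\ell$, so the largest polynomials are sizable (up to tens of megabytes); nevertheless, once the coefficient lists have been parsed the valuation checks run quickly, and the symmetry together with the triangular constraint cuts the work roughly in half. As partial independent checks one could (i) recompute the coefficients $a_{\ell,\ell-m}$ read off from Sutherland's data using the closed formulas of Theorem \ref{thm.form}, or recompute $\Phi_\ell$ itself by Yui's algorithm \cite{Yui}, and (ii) confirm that the congruences so obtained are consistent with Propositions \ref{prop.2Div} and \ref{prop.3Div}.
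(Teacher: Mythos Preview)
Your proposal is correct and matches the paper's own proof: both verify the conjecture by a direct computer check of the coefficients of $\Phi_\ell(X,Y)$ for all primes $\ell\le 400$ using Sutherland's explicit tables \cite{Sutherland}. The paper gives fewer implementation details and simply notes the verification was done in Magma, but the approach is the same.
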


\begin{proof}
	
	By applying the algorithms in \cite{BLS} and \cite{Bruinier-Ono-Sutherland}, Sutherland computed the modular polynomials $\Phi_N(X,Y)$ for small $N$ \cite{Sutherland}. In particular, he computed $\Phi_N(X,Y)$ for $N\le \bd$. Note that the coefficients of $a_{m,n}$ are often of massive size. For instance, the constant coefficient of $\Phi_5(X,Y)$ is divisible already by $2^{90}$. By checking the coefficients of modular polynomials from \cite{Sutherland}, we verified that Conjecture \ref{conj.div} is true for all primes $\ell\le 400$. Our computations were done using Magma \cite{Magma}.
\end{proof}

\begin{remark}
	We briefly discuss here the history of computing $\Phi_\ell(X,Y)$ for prime $\ell$. 
	The polynomial $\Phi_2(X,Y)$ was given explicitly in 1922 by Fricke in his book \cite{Fricke} on page $372$.
	We could not find the first reference for $\Phi_2(X,Y)$. The case $\ell=3$ was done by Smith in 1878 \cite{Smith}. It seems fair to say that the case $\ell=2$ must have been known to Smith at the time. The case $\ell=5$ was due to Berwick (\cite{Berwick}, 1916). It is interesting to note that the case $\ell=7$ seems to have been computed only in 1975 by Herrmann \cite{Herrmann}. Note that Herrmann seemed to have been unaware of Berwick's computation of the case $\ell=5$ in 1916. The case $\ell=11$ was obtained by Kaltofen and Yui (\cite{Kaltofen-Yui}, 1984). More recently, Sutherland has computed $\Phi_\ell(X,Y)$ for all primes $\ell<1000$ (see \cite{Sutherland}) using the algorithm in \cite{BLS}. Due to the generally massive size of the coefficients of $\Phi_\ell(X,Y)$, we only verified Conjecture \ref{conj.div} up to $400$ using Sutherland's computations. 
\end{remark}

To illustrate Conjecture \ref{conj.div}, we list the factored coefficients of $\Phi_\ell(X,Y)$ for $\ell=5$ (see \cite{Herrmann} page 193, or \cite{Sutherland}).
{\allowdisplaybreaks
	\begin{align*}
		a_{0,0}&=2^{\bf 90}\cdot 3^{\bf 18}\cdot 5^3\cdot 11^9\\
		a_{0,1}&=2^{\bf 77}\cdot 3^{\bf 16}\cdot 5^3\cdot 11^6\cdot 31\cdot 1193\\
		a_{1,1}&=-2^{\bf 62}\cdot 3^{\bf 13}\cdot 11^3\cdot 26984268714163\\
		a_{0,2}&=2^{\bf 60}\cdot 3^{\bf 13}\cdot 5^{2}\cdot 11^{3}\cdot 13^{2}\cdot 3167\cdot 204437\\
		a_{1,2}&=2^{\bf 47}\cdot 3^{\bf 10}\cdot 5^4\cdot 53359\cdot 131896604713\\
		a_{2,2}&=2^{\bf 30}\cdot 3^{\bf 8}\cdot 5^4\cdot 7\cdot 13\cdot 1861\cdot 6854302120759\\
		a_{0,3}&=2^{\bf 48}\cdot 3^{\bf 9}\cdot 5^2\cdot 31\cdot 1193\cdot 24203\cdot 2260451\\
		a_{1,3}&=-2^{\bf 31}\cdot 3^{\bf 7}\cdot 5^3\cdot 327828841654280269\\
		a_{2,3}&=2^{\bf 17}\cdot 3^{\bf 4}\cdot 5^3\cdot 2311\cdot 2579\cdot 3400725958453\\
		a_{3,3}&=-2^2\cdot 5^2\cdot 11\cdot 17\cdot 131\cdot 1061\cdot 169751677267033\\
		a_{0,4}&=2^{\bf 30}\cdot 3^{\bf 7}\cdot 5\cdot 13^2\cdot 3167\cdot 204437\\
		a_{1,4}&=2^{\bf 20}\cdot 3^{\bf 4}\cdot 5^3\cdot 12107359229837\\
		a_{2,4}&=3\cdot 5^3\cdot 167\cdot 6117103549378223\\
		a_{3,4}&=2^5\cdot 3\cdot 5^2\cdot 197\cdot 227\cdot 421\cdot 2387543\\
		a_{4,4}&=2^3\cdot 5^2\cdot 257\cdot 32412439\\
		a_{0,5}&=2^{\bf 17}\cdot 3^{\bf 4}\cdot 5\cdot 31\cdot 1193 \\
		a_{1,5}&=-2\cdot 3\cdot 5^2\cdot 1644556073\\
		a_{2,5}&=2^5\cdot 5^2\cdot 13\cdot 195053\\
		a_{3,5}&=-2^2\cdot 3^2\cdot 5\cdot 131\cdot 193\\
		a_{4,5}&=2^3\cdot 3\cdot 5\cdot 31\\
\end{align*}}%
\begin{emp}\label{em.proof_mod}
	
	Now we will give the proof of Theorem \ref{thm.form}.

	\begin{proof}[Proof of Theorem \ref{thm.form}]
		Fix a prime $\ell\ge 3$. From \eqref{eq.qexpansion}, we know that 
		\[j(\ell z)=\sum\limits_{i=-1}^\infty c_i q^{\ell i}.\]
		
		By construction, $\Phi_\ell(X,Y)=X^{\ell+1}+Y^{\ell+1}+\sum\limits_{0\le r,s\le \ell}a_{r,s}X^rY^s,$ and
		
		\begin{equation}\label{eq.construction}
			\Phi_\ell(j(\ell z), j(z))=0.
		\end{equation}

		We now prove \ref{thm.form} \ref{it.el_m}. Observe that \eqref{eq.a_l_l_m} is equivalent to the following:
		\begin{equation}\label{eq.a_l_l_m_alt}
			a_{\ell,\ell-m}=\sum\limits_{\substack{ t_1+t_2 2+\cdots +t_{m} m=m\\
					0\le t_i,\;
					u:=t_1+\cdots +t_m-1}} (-1)^{u}\frac{u!}{\prod\limits_{1\le i\le m} t_i ! }\ell\binom{\ell-m+u}{u}c_{0}^{t_1}\cdots c_{m-1}^{t_m}.
		\end{equation}

		We prove \eqref{eq.a_l_l_m_alt} by induction on $m$. Since $a_{\ell,\ell}=-1$, it is immediate that \eqref{eq.a_l_l_m_alt} holds for $m=0$. Assume that \eqref{eq.a_l_l_m_alt} is true for $a_{\ell,\ell-1},\dots,a_{\ell,\ell-m+1}$ for some $m-1$ satisfying $1\le m-1\le \ell-2$. In the following, we show that it is true for $a_{\ell,\ell-m}$. Recall the notion of multinomial coefficients. Let $f\in\mathbb{Z}_{\ge 1}$ and $g_1, \dots, g_\eta\in\mathbb{Z}_{\ge 0}$ satisfying $\sum_{1\le i\le \eta} g_i=f$. Define $\binom{f}{g_1,\cdots,g_\eta}:=\frac{f!}{g_1!\cdots g_\eta !}$.

		\begin{claim}\label{cl.coeff3}
			The coefficient of the term $q^{-\ell^2-\ell+m}$ in the $q$-expansion of the left-hand side of \eqref{eq.construction}, i.e. $j(\ell z)^{\ell+1}+j(z)^{\ell+1}+\sum\limits_{0\le r,s\le \ell}a_{r,s}j(\ell z)^r j(z)^s$, is

			\begin{equation}\label{eq.coeff2}
				a_{\ell,\ell-m}+\sum_{0\le n\le m-1} a_{\ell,\ell-n}\left(\sum\limits_{\substack{t_1+t_2 2+\cdots+t_{m-n}(m-n)=m-n\\
						t_i\ge 0}}\binom{\ell-n}{t_1,\dots,t_{m-n},\ell-n-\sum\limits_{1\le i\le m-n} t_i}c_0^{t_1}\cdots c_{m-n-1}^{t_{m-n}}\right).
			\end{equation}
		\end{claim}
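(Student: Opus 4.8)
The plan is to determine, monomial by monomial, which terms of $\Phi_\ell(X,Y)$ contribute to the coefficient of $q^{-\ell^2-\ell+m}$ after the substitution $X=j(\ell z)$, $Y=j(z)$, and then to read off each contribution from the $q$-expansions using the multinomial theorem. Throughout I would normalize
\[
j(\ell z)=q^{-\ell}\Bigl(1+c_0q^{\ell}+c_1q^{2\ell}+\cdots\Bigr),\qquad
j(z)=q^{-1}\bigl(1+c_0q+c_1q^2+\cdots\bigr),
\]
so that $j(\ell z)^rj(z)^s$ equals $q^{-\ell r-s}$ times a power series in $q$ with constant term $1$; in particular its lowest-order term is exactly $q^{-\ell r-s}$.

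First I would isolate the relevant monomials. The term $X^{\ell+1}$ gives $j(\ell z)^{\ell+1}$, which is a Laurent series in $q^{\ell}$ alone, hence contributes nothing to $q^{-\ell^2-\ell+m}$ since $0<m<\ell$ forces $\ell\nmid(-\ell^2-\ell+m)$. The term $Y^{\ell+1}$ gives $j(z)^{\ell+1}$, whose lowest term $q^{-(\ell+1)}$ lies strictly above $q^{-\ell^2-\ell+m}$ when $0<m<\ell$ and $\ell\ge 3$ (indeed $m+1\le\ell<\ell^2$), so it contributes $0$. For a term $a_{r,s}j(\ell z)^rj(z)^s$ with $0\le r,s\le\ell$, a nonzero contribution to $q^{-\ell^2-\ell+m}$ requires $\ell r+s\ge\ell^2+\ell-m$; but $r\le\ell-1$ gives $\ell r+s\le\ell^2<\ell^2+\ell-m$, so necessarily $r=\ell$, and then $s\ge\ell-m$, i.e.\ $s=\ell-n$ with $0\le n\le m$. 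Thus the coefficient in question equals $\sum_{0\le n\le m}a_{\ell,\ell-n}\cdot\big(\text{coefficient of }q^{-\ell^2-\ell+m}\text{ in }j(\ell z)^{\ell}j(z)^{\ell-n}\big)$.

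Next I would evaluate these coefficients. From the normalization,
\[
j(\ell z)^{\ell}j(z)^{\ell-n}=q^{-\ell^2-\ell+n}\Bigl(1+c_0q^{\ell}+\cdots\Bigr)^{\ell}\bigl(1+c_0q+c_1q^2+\cdots\bigr)^{\ell-n},
\]
so the coefficient of $q^{-\ell^2-\ell+m}$ in it is the coefficient of $q^{m-n}$ in the product of the two power series. Since $0\le m-n\le m<\ell$, the first factor contributes only its constant term $1$, so this equals the coefficient of $q^{m-n}$ in $\bigl(1+c_0q+c_1q^2+\cdots\bigr)^{\ell-n}$. Expanding by the multinomial theorem and collecting the monomials of total $q$-degree $m-n$ — equivalently, summing over $t_i\ge 0$ with $t_1+2t_2+\cdots=m-n$, which forces $t_i=0$ for $i>m-n$ and $\sum_i t_i\le m-n\le\ell-n$ so that $\ell-n-\sum_i t_i\ge 0$ — produces exactly the inner sum of \eqref{eq.coeff2}, with multinomial coefficient $\binom{\ell-n}{t_1,\dots,t_{m-n},\ell-n-\sum_i t_i}$; the case $n=m$ contributes the single term $a_{\ell,\ell-m}\cdot 1$. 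Assembling these gives \eqref{eq.coeff2}.

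I do not expect a genuine obstacle: once the $q$-expansions are normalized, the argument is essentially bookkeeping. The points needing care are the support analysis in the second paragraph — ruling out $X^{\ell+1}$, $Y^{\ell+1}$, and all $r\le\ell-1$, where the hypotheses $0<m<\ell$ and $\ell\ge 3$ are used — together with the observation that the $j(\ell z)^{\ell}$ factor is invisible in the final extraction precisely because $m-n<\ell$.
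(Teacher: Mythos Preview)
Your proof is correct and follows essentially the same approach as the paper: both isolate the contributing monomials by comparing the lowest $q$-degree of each term to $-\ell^2-\ell+m$, then apply the multinomial theorem to extract the coefficient of $q^{m-n}$ from $(1+c_0q+c_1q^2+\cdots)^{\ell-n}$, noting that the $j(\ell z)^{\ell}$ factor contributes only its constant term since $m-n<\ell$. The paper's version introduces an explicit truncation $j_1+j_2$ before invoking the multinomial theorem, whereas you argue directly that $t_i=0$ for $i>m-n$ and $\sum_i t_i\le m-n\le\ell-n$; these are equivalent bookkeeping devices.
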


		Indeed, the coefficient of the term $q^{-\ell^2-\ell+m}$ in the $q$-expansion of $j(\ell z)^{\ell+1}$ is 0 because all the powers in the $q$-expansion of $j(\ell z)$ are divisible by $\ell$ and $1\le m\le\ell-1$. 
		
		Assume that $0\le r\le \ell$ and $0\le s\le\ell+1$ satisfy one of the following three conditions: (a) $r=0$ and $s=\ell+1$; (b) $0\le r<\ell$ and $0\le s\le \ell$; (c) $r=\ell$ and $0\le s<\ell-m$. Then the lowest degree in the $q$-expansion of $a_{r,s}j(\ell z)^rj(z)^s$ is greater than $-\ell^2-\ell+m$. 
		
		The coefficient of the term $q^{-\ell^2-\ell+m}$ in the $q$-expansion of $a_{\ell,\ell-m}j(\ell z)^\ell j(z)^{\ell-m}$ is $a_{\ell,\ell-m}$.

		\begin{claim}\label{cl.A_n}
			Let $0\le n< m$ and let $A_n$ denote the coefficient of the term $q^{-\ell^2-\ell+m}$ in the $q$-expansion of $j(\ell z)^\ell j(z)^{\ell-n}$. Then
			$$A_n=\sum\limits_{\substack{t_1+t_2 2+\cdots+t_{m-n}(m-n)=m-n\\
					t_i\ge 0}}\binom{\ell-n}{t_1,\dots,t_{m-n},\ell-n-\sum\limits_{1\le i\le m-n} t_i}c_0^{t_1}\cdots c_{m-n-1}^{t_{m-n}}.$$
		\end{claim}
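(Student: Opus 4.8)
The plan is to renormalize the two $q$-expansions and then extract the desired coefficient by the multinomial theorem. Set $h(q):=1+\sum_{k\ge 1}c_{k-1}q^{k}=1+c_0q+c_1q^2+\cdots$, so that \eqref{eq.qexpansion} gives $j(z)=q^{-1}h(q)$ and $j(\ell z)=q^{-\ell}h(q^{\ell})$. Multiplying these,
\[
j(\ell z)^{\ell}\,j(z)^{\ell-n}=q^{-\ell^2-\ell+n}\,h(q^{\ell})^{\ell}\,h(q)^{\ell-n},
\]
so $A_n$ — the coefficient of $q^{-\ell^2-\ell+m}$ on the left — is precisely the coefficient of $q^{m-n}$ in $h(q^{\ell})^{\ell}h(q)^{\ell-n}$.

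Next I would exploit the size bound on $m-n$. Under the inductive hypothesis in the proof of \ref{it.el_m} we have $m\le\ell-1$, and since $0\le n<m$ this gives $0\le m-n\le m\le\ell-1<\ell$. Every monomial of $h(q^{\ell})^{\ell}$ other than its constant term $1$ is a multiple of $q^{\ell}$, hence contributes nothing to the coefficient of $q^{m-n}$; therefore the coefficient of $q^{m-n}$ in $h(q^{\ell})^{\ell}h(q)^{\ell-n}$ equals the coefficient of $q^{m-n}$ in $h(q)^{\ell-n}$ alone. Expanding $h(q)^{\ell-n}=\bigl(1+\sum_{k\ge1}c_{k-1}q^{k}\bigr)^{\ell-n}$ by the multinomial theorem, a general term is indexed by nonnegative integers $t_1,t_2,\dots$ (the exponents of the $q^{k}$ summands) together with the slack exponent $t_0:=\ell-n-\sum_{i\ge1}t_i\ge0$ on the $1$, and contributes $\binom{\ell-n}{t_0,t_1,t_2,\dots}\prod_{k\ge1}c_{k-1}^{t_k}\,q^{\sum_{k\ge1}kt_k}$. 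Imposing $\sum_{k\ge1}kt_k=m-n$ forces $t_k=0$ for $k>m-n$, so only $t_1,\dots,t_{m-n}$ survive, and collecting these terms yields exactly the displayed expression for $A_n$, with the last entry of the multinomial coefficient equal to $\ell-n-\sum_{1\le i\le m-n}t_i$.

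The argument is essentially bookkeeping, so there is no genuine obstacle; the one place where care is needed — and where the hypothesis $m\le\ell-1$ is actually consumed — is the passage from $h(q^{\ell})^{\ell}h(q)^{\ell-n}$ to $h(q)^{\ell-n}$, which relies on $0\le m-n<\ell$. I would also double-check the power-of-$q$ accounting in the first display, since an off-by-one there would misidentify which coefficient of $h(q)^{\ell-n}$ is being computed.
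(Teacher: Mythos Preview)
Your proof is correct and follows essentially the same route as the paper: renormalize $j(z)=q^{-1}h(q)$ and $j(\ell z)=q^{-\ell}h(q^{\ell})$, use $m-n<\ell$ to discard all but the constant term of $h(q^{\ell})^{\ell}$, and then read off the coefficient of $q^{m-n}$ in $h(q)^{\ell-n}$ via the multinomial theorem. The only cosmetic difference is that the paper first truncates $h(q)$ to a polynomial $j_1(z)=\sum_{i=0}^{m-n}c_{i-1}q^{i}$ before invoking the multinomial theorem, whereas you apply it directly and then note that $t_k=0$ for $k>m-n$; these are the same observation packaged in two orders.
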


		Because $j(z)=\sum\limits_{i=-1}^\infty c_i q^i=q^{-1}\sum\limits_{i=0}^\infty c_{i-1} q^i$ and $j(\ell z)=\sum\limits_{i=-1}^\infty c_i q^{\ell i}=q^{-\ell}\sum\limits_{i=0}^\infty c_{i-1} q^{\ell i}$, we have that

		$$j(\ell z)^\ell j(z)^{\ell-n}=q^{-\ell^2-\ell+n}(\sum\limits_{i=0}^\infty c_{i-1} q^{\ell i})^\ell(\sum\limits_{i=0}^\infty c_{i-1} q^i)^{\ell-n}.$$

		In the $q$-expansion of $(\sum\limits_{i=0}^\infty c_{i-1} q^{\ell i})^\ell$, the lowest degree term is $1$, and the second lowest degree is $\ell$. So $A_n$ equals the coefficient of $q^{m-n}$ in the $q$-expansion of $(\sum\limits_{i=0}^\infty c_{i-1} q^i)^{\ell-n}$.

		Let $j_1(z)=\sum\limits_{i=0}^{m-n} c_{i-1} q^i$ and $j_2(z)=\sum\limits_{i=m-n+1}^\infty c_{i-1} q^i$. Then $(\sum\limits_{i=0}^\infty c_{i-1} q^i)^{\ell-n}=(j_1(z)+j_2(z))^{\ell-n}=\sum\limits_{k=0}^{\ell-n}\binom{\ell-n}{k} (j_1(z))^{\ell-n-k}(j_2(z))^{k}$.

		If $k>0$, then the lowest degree in the $q$-expansion of $j_2(z)^k$ is $k(m-n+1)>m-n$. So $A_n$ equals the coefficient of the term $q^{m-n}$ in the $q$-expansion of $(j_1(z))^{\ell-n}$. 
		
		{\allowdisplaybreaks
			\begin{align*}
				(j_1(z))^{\ell-n}&=(\sum\limits_{i=0}^{m-n} c_{i-1} q^i)^{\ell-n}\\
				&\stackrel{(*)}{=}\sum\limits_{\substack{t_1+\cdots+t_{m-n+1}=\ell-n\\
						t_i\ge 0}} \binom{\ell-n}{t_1,\dots,t_{m-n},t_{m-n+1}}(c_0 q)^{t_1}(c_1 q^2)^{t_2}\cdots(c_{m-n-1}q^{m-n})^{t_{m-n}}1^{t_{m-n+1}}\\
				&=\sum\limits_{\substack{t_1+\cdots+t_{m-n}\le\ell-n\\
						t_i\ge 0}}\binom{\ell-n}{t_1,\dots,t_{m-n},\ell-n-\sum\limits_{1\le i\le m-n} t_i}c_0^{t_1}\cdots c_{m-n-1}^{t_{m-n}}q^{t_1+t_2 2+\cdots+t_{m-n}(m-n)}.
		\end{align*}}%
		Note that $(*)$ is by the Multinomial Theorem. 
		It follows that Claim \ref{cl.A_n} is true. So the coefficient of the term $q^{-\ell^2-\ell+m}$ in the $q$-expansion of $j(\ell z)^{\ell+1}+j(z)^{\ell+1}+\sum\limits_{0\le r,s\le \ell}a_{r,s}j(\ell z)^r j(z)^s$ is \eqref{eq.coeff2}. Therefore Claim \ref{cl.coeff3} is true.

		Since $\Phi_\ell(j(\ell z), j(z))=0$ in \eqref{eq.construction}, we find that the expression \eqref{eq.coeff2} equals $0$. Therefore,
		{\allowdisplaybreaks
			\begin{equation}\label{eq.ind}
				a_{\ell,\ell-m}=\enspace-\sum_{0\le n\le m-1} a_{\ell,\ell-n}\left(\sum\limits_{\substack{t_1+t_2 2+\cdots+t_{m-n}(m-n)=m-n\\
						t_i\ge 0}}\binom{\ell-n}{t_1,\dots,t_{m-n},\ell-n-\sum\limits_{1\le i\le m-n} t_i}c_0^{t_1}\cdots c_{m-n-1}^{t_{m-n}}\right).
		\end{equation}}%
		
		Our next step is to use the induction hypothesis to express $a_{\ell,\ell-n}$ when $n\le m-1$. For convenience, let us introduce the following notation. 
		
		For any $\lambda>0$ and $0\le r_1<r_2<\cdots<r_\lambda\le \ell$ and $0\le t_1,\dots,t_\lambda\le \ell$ such that $0\le \sum\limits_{1\le i\le\lambda} t_i r_i\le \ell$, we define
		\begin{equation*}
			d_{r_1,\dots,r_\lambda}^{t_1,\dots,t_\lambda}:=\left((-1)^{-1+\sum\limits_{1\le i\le\lambda} t_i}\right)\left(\frac{1}{\prod\limits_{1\le i\le\lambda} t_i ! }\right)\ell\left(\frac{(\ell-1-\sum\limits_{1\le i\le\lambda} t_i r_i+\sum\limits_{1\le i\le\lambda} t_i)!}{(\ell-\sum\limits_{1\le i\le\lambda} t_i r_i)!}\right),
		\end{equation*}
		so that \eqref{eq.a_l_l_m_alt} becomes
		\begin{equation}\label{eq.formsimp}
			a_{\ell,\ell-m}=\sum\limits_{\substack{ t_1+t_2 2+\cdots +t_{m} m=m\\
					0\le t_i,\;
					u:=t_1+\cdots +t_m-1}} d_{1,\dots,m}^{t_1,\dots,t_m}c_{0}^{t_1}\cdots c_{m-1}^{t_m}.
		\end{equation}

		Given $t_{i,1}$ with $0\le t_{i,1}\le t_i$, let $t_{i,2}:=t_i-t_{i,1}$. Then 
		{\allowdisplaybreaks
			\begin{align*}
				&a_{\ell,\ell-m}\\
				&\qquad=\enspace-\sum_{0\le n\le m-1}\left(\sum\limits_{\substack{ \overline{t}_1+\overline{t}_2 2+\cdots +\overline{t}_{m} m=n\\
						0\le \overline{t}_i,\;
						u:=\overline{t}_1+\cdots +\overline{t}_m-1}} (-1)^{u}\frac{u!}{\prod\limits_{1\le i\le m} \overline{t}_i ! }\ell\binom{\ell-n+u}{u}c_{0}^{\overline{t}_1}\cdots c_{m-1}^{\overline{t}_m}\right)\cdot\\
				&\pushright{\left(\sum_{\substack{ \hat{t}_1 +\hat{t}_2 2+\cdots +\hat{t}_m m=m-n\\
							0\le \hat{t}_i}} \binom{\ell-n}{\hat{t}_1,\dots,\hat{t}_m,\ell-n-\sum\limits_{1\le i\le m} \hat{t}_i}c_{0}^{\hat{t}_1}\cdots c_{m-1}^{\hat{t}_m}\right)\quad}\\
				&\qquad=\enspace-\sum_{0\le n\le m-1}\sum\limits_{\substack{ \overline{t}_1+\overline{t}_2 2+\cdots +\overline{t}_{m} m=n\\
						\hat{t}_1 +\hat{t}_2 2+\cdots +\hat{t}_m m=m-n\\
						\overline{t}_i\ge 0,\;\hat{t}_i\ge 0
				}} \left((-1)^{-1+\sum\limits_{1\le i\le m} \overline{t}_i}\right)\left(\frac{(-1+\sum\limits_{1\le i\le m} \overline{t}_i)!}{\prod\limits_{1\le i\le m} \overline{t}_i ! }\right)\ell\binom{\ell-1-n+\sum\limits_{1\le i\le m} \overline{t}_i}{-1+\sum\limits_{1\le i\le m} \overline{t}_i}\cdot\\
				&\pushright{ \binom{\ell-n}{\hat{t}_1,\dots,\hat{t}_m,\ell-n-\sum\limits_{1\le i\le m} \hat{t}_i}c_{0}^{\overline{t}_1+\hat{t}_1}\cdots c_{m-1}^{\overline{t}_m+\hat{t}_m}\quad}\\
				&\qquad=\enspace-\sum_{\substack{ t_1+t_2 2+\cdots +t_m m=m\\
						0\le t_i}}\sum_{\substack{ 0\le t_{\beta,1}\le t_\beta\\
						\text{for } 1\le \beta\le m\\
						(t_{1,1},\dots,t_{m,1})\ne \\
						(t_1,\dots,t_m)}} \left((-1)^{-1+\sum\limits_{1\le i\le m} t_{i,1}}\right)\left(\frac{(-1+\sum\limits_{1\le i\le m} t_{i,1})!}{\prod\limits_{1\le i\le m} t_{i,1} ! }\right)\ell\cdot\\
				&\pushright{\binom{\ell-1-\sum\limits_{1\le i\le m} t_{i,1} i+\sum\limits_{1\le i\le m} t_{i,1}}{-1+\sum\limits_{1\le i\le m}t_{i,1}}\binom{\ell-\sum\limits_{1\le i\le m} t_{i,1}i}{t_{1,2},\dots,t_{m,2},\ell-\sum\limits_{1\le i\le m} t_{i,1}i-\sum\limits_{1\le i\le m} t_{i,2}}c_{0}^{t_1}\cdots c_{m-1}^{t_m}\quad}\\
				&\qquad=\enspace-\sum_{\substack{ t_1+t_2 2+\cdots +t_m m=m\\
						0\le t_i}}\left(\sum_{\substack{ 0\le t_{\beta,1}\le t_\beta\\
						\text{for } 1\le \beta\le m\\
						(t_{1,1},\dots,t_{m,1})\ne \\
						(t_1,\dots,t_m)}} d_{1,\dots,m}^{t_{1,1},\dots,t_{m,1}}\binom{\ell-\sum\limits_{1\le i\le m} t_{i,1}r_i}{t_{1,2},\dots,t_{m,2},\ell-\sum\limits_{1\le i\le m} t_{i,1}r_i-\sum\limits_{1\le i\le m} t_{i,2}}\right)c_{0}^{t_1}\cdots c_{m-1}^{t_m}.
		\end{align*}}%

		To prove \eqref{eq.formsimp}, it is now enough to prove that
		{\allowdisplaybreaks
			\begin{equation}\label{eq.coeff}
				d_{r_1,\dots,r_\lambda}^{t_1,\dots,t_\lambda}=-\sum_{\substack{ 0\le t_{1,1}\le t_1\\
						\cdot\cdot\cdot\\
						0\le t_{\lambda,1}\le t_\lambda\\
						(t_{1,1},\dots,t_{\lambda,1})\ne \\
						(t_1,\dots,t_\lambda)}} d_{r_1,\dots,r_\lambda}^{t_{1,1},\dots,t_{\lambda,1}}\binom{\ell-\sum\limits_{1\le i\le \lambda} t_{i,1}r_i}{t_{1,2},\dots,t_{\lambda,2},\ell-\sum\limits_{1\le i\le \lambda} t_{i,1}r_i-\sum\limits_{1\le i\le \lambda} t_{i,2}}
		\end{equation}}%
		for any $\lambda>0$, $0<r_1<\cdots <r_\lambda$ and $t_1,\dots,t_\lambda\ge 1$ such that $\sum\limits_{1\le i\le\lambda} r_it_i=m$. The formula \eqref{eq.coeff} is proved below using Claim \ref{cl.recur}. First, we need the following.

		For $d,n\in\mathbb{N}$, let $\bold{s}(d,n)$ and $\bold{S}(d,n)$ denote the Stirling number of the first and of the second kind, respectively. Recall that $\bold{s}(d,n)$ is $(-1)^{d-n}$ multiplies the number of elements in the symmetric group $S_d$ with $n$ disjoints cycles and $\bold{S}(d,n)$ is the number of ways to partition a set of $d$ elements into $n$ non-empty subsets. If $0\le d<n$, then $\bold{S}(d,n)=0$ by the definition. The numbers $\bold{s}(d,n)$ and $\bold{S}(d,n)$ are characterized by Equations \eqref{eq.stirling1} and \eqref{eq.stirling2_0} below, respectively. 
		
		For $n \ge 1$, we have that 
		\begin{equation}\label{eq.stirling1}
			\prod\limits_{0\le i\le n-1} (x-i)=\sum\limits_{0\le i\le n} \bold{s}(n,i)x^i.
		\end{equation}
		See \cite{Riordan} p. 33 (34). For $d\ge 0$ and $n\ge 0$, we have that 
		\begin{equation}\label{eq.stirling2_0}
			\sum\limits_{0\le i\le n} (-1)^i\binom{n}{i} (n-i)^d= n!\bold{S}(d,n).
		\end{equation}
		See \cite{Riordan} p. 33 (38) and Example $12$ on p. $13$. By changing $i$ to $n-i$, we get that 
		\begin{equation}\label{eq.stirling2}
			\sum\limits_{0\le i\le n} (-1)^i\binom{n}{i} i^d=(-1)^n n!\bold{S}(d,n).
		\end{equation}

		For the rest of the proof of Theorem \ref{thm.form} \ref{it.el_m}, fix $\lambda>0$, $0<r_1<\cdots <r_\lambda$ and $t_1,\dots,t_\lambda\ge 1$ such that $\sum\limits_{1\le i\le\lambda} r_it_i=m$.

		\begin{claim}\label{cl.recur} Let $\lambda>0$ and let $\alpha$ be an integer with $0\le\alpha\le\lambda$. For each integer $i$ with $\alpha+1\le i\le\lambda$, fix $t_{i,1}$ with $0\le t_{i,1}\le t_i$. Then the following is true.

			\begin{enumerate}[label=(\alph*)]
				\item If $\alpha=0$, then
				{\allowdisplaybreaks
					\begin{align*}
						&d_{r_1,\dots,r_\lambda}^{t_{1,1},\dots,t_{\lambda,1}}\binom{\ell-\sum\limits_{1\le i\le \lambda} t_{i,1}r_i}{t_{1,2},\dots,t_{\lambda,2},\ell-\sum\limits_{1\le i\le \lambda} t_{i,1}r_i-\sum\limits_{1\le i\le \lambda} t_{i,2}}\\
						&\qquad=\enspace\left((-1)^{-1+\sum\limits_{1\le i\le\lambda} t_{i,1}}\right)\left( \frac{\ell}{\prod\limits_{1\le i\le\lambda} t_{i,1}! t_{i,2}! }\right)\sum_{0\le v_0\le -1+\sum\limits_{1\le i\le \lambda} t_i}  \bold{s}(-1+\sum\limits_{1\le i\le \lambda} t_i,v_0)\cdot\\
						&\pushright{(\ell-1-\sum\limits_{1\le i\le\ell} t_{i,1}(r_i-1))^{v_0}}.
				\end{align*}}%
				\item If $1\le \alpha< \lambda$, then
				{\allowdisplaybreaks
					\begin{align}\label{eq.uptoalpha}
						&\sum_{\substack{ 0\le t_{1,1}\le t_1,\\
								\dots\\
								0\le t_{\alpha,1}\le t_\alpha}} d_{r_1,\dots,r_\lambda}^{t_{1,1},\dots,t_{\lambda,1}}\binom{\ell-\sum\limits_{1\le i\le \lambda} t_{i,1}r_i}{t_{1,2},\dots,t_{\lambda,2},\ell-\sum\limits_{1\le i\le \lambda} t_{i,1}r_i-\sum\limits_{1\le i\le \lambda} t_{i,2}}\\
						&\qquad=\enspace\left((-1)^{-1+\sum\limits_{\alpha<i\le\lambda} t_{i,1}}\right)\left(\frac{\ell}{\prod\limits_{\alpha< i\le\lambda} t_{i,1}! t_{i,2}! }\right)\sum_{\substack{0\le v_0\le -1+\sum\limits_{1\le i\le \lambda} t_i\\ 0\le v_j\le v_0-\sum\limits_{1\le k<j} v_k\\ \text{for } 1\le j\le\alpha}}\bold{s}(-1+\sum\limits_{1\le i\le \lambda} t_i,v_0)\cdot\nonumber\\
						&\pushright{(\ell-1-\sum\limits_{\alpha<i\le \lambda} t_{i,1}(r_i-1))^{v_0-\sum\limits_{1\le i\le\alpha} v_i}\prod_{1\le i\le \alpha} (-1)^{t_i}(1-r_i)^{v_i}\bold{S}(v_i,t_i)\binom{v_0-\sum\limits_{1\le j< i} v_j}{v_i}.}\nonumber
				\end{align}}%
				\item 
				If $\alpha=\lambda$, then		
				\begin{equation}\label{eq.lambda}
					\begin{split}
						&\sum_{\substack{ 0\le t_{1,1}\le t_1\\
								\cdot\cdot\cdot\\
								0\le t_{\lambda,1}\le t_\lambda}} d_{r_1,\dots,r_\lambda}^{t_{1,1},\dots,t_{\lambda,1}}\binom{\ell-\sum\limits_{1\le i\le \lambda} t_{i,1}r_i}{t_{1,2},\dots,t_{\lambda,2},\ell-\sum\limits_{1\le i\le \lambda} t_{i,1}r_i-\sum\limits_{1\le i\le \lambda} t_{i,2}}\\
						&=-\ell\sum_{\substack{0\le v_0\le -1+\sum\limits_{1\le i\le \lambda} t_i\\ 0\le v_j\le v_0-\sum\limits_{1\le k<j} v_k\\ \text{for } 1\le j\le\lambda}}\left(\bold{s}(-1+\sum\limits_{1\le i\le \lambda} t_i,v_0)(\ell-1)^{v_0-\sum\limits_{1\le i\le\lambda} v_i}\cdot\right. \\
						&\pushright{\left.\prod_{1\le i\le \lambda} (-1)^{t_i}(1-r_i)^{v_i}\bold{S}(v_i,t_i)\binom{v_0-\sum\limits_{1\le j< i} v_j}{v_i}\right).}
					\end{split}
				\end{equation}

			\end{enumerate}

		\end{claim}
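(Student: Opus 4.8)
The plan is to prove parts (a), (b) and (c) simultaneously by induction on $\alpha$, running from $\alpha=0$ up to $\alpha=\lambda$: part (a) is the base case $\alpha=0$, part (b) is the inductive step in the range $1\le\alpha<\lambda$, and part (c) is the terminal instance $\alpha=\lambda$ of that same step, read with the conventions that an empty sum is $0$ and an empty product is $1$ (so that at $\alpha=\lambda$ the leftover sign $(-1)^{-1+\sum_{\alpha<i\le\lambda}t_{i,1}}$ becomes $-1$ and the leftover linear form $\ell-1-\sum_{\alpha<i\le\lambda}t_{i,1}(r_i-1)$ becomes $\ell-1$). Throughout I abbreviate $s:=\sum_{1\le i\le\lambda}t_i$, so that the recurring quantity $-1+\sum_{1\le i\le\lambda}t_i$ equals $s-1$, and I keep the notation $t_{i,2}:=t_i-t_{i,1}$.

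For the base case $\alpha=0$ all of $t_{1,1},\dots,t_{\lambda,1}$ are fixed, so the left-hand side is a single product. Unwinding the definition of $d_{r_1,\dots,r_\lambda}^{t_{1,1},\dots,t_{\lambda,1}}$ and expanding the multinomial coefficient, the factor $(\ell-\sum_{1\le i\le\lambda}t_{i,1}r_i)!$ cancels and one is left with
\[
(-1)^{-1+\sum_{1\le i\le\lambda}t_{i,1}}\,\frac{\ell}{\prod_{1\le i\le\lambda}t_{i,1}!\,t_{i,2}!}\cdot\frac{\bigl(\ell-1-\sum_{1\le i\le\lambda}t_{i,1}(r_i-1)\bigr)!}{\bigl(\ell-s-\sum_{1\le i\le\lambda}t_{i,1}(r_i-1)\bigr)!}.
\]
The last ratio is a product of $s-1$ consecutive descending integers starting from $\ell-1-\sum_{1\le i\le\lambda}t_{i,1}(r_i-1)$, so applying \eqref{eq.stirling1} with $n=s-1$ and $x=\ell-1-\sum_{1\le i\le\lambda}t_{i,1}(r_i-1)$ rewrites it as $\sum_{0\le v_0\le s-1}\bold{s}(s-1,v_0)\,x^{v_0}$; this gives exactly the right-hand side of (a).

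For the inductive step I would assume the identity at level $\alpha-1$ for every admissible choice of the fixed parameters $t_{\alpha,1},\dots,t_{\lambda,1}$, then write the left-hand side of \eqref{eq.uptoalpha} at level $\alpha$ as $\sum_{0\le t_{\alpha,1}\le t_\alpha}$ applied to the sum over $t_{1,1},\dots,t_{\alpha-1,1}$, and apply the induction hypothesis to the inner sum (this is (a) when $\alpha=1$ and (b) at level $\alpha-1$ otherwise), now with $t_{\alpha,1}$ treated as one more fixed parameter. After this substitution the variable $t_{\alpha,1}$ survives only in the sign $(-1)^{-1+\sum_{\alpha\le i\le\lambda}t_{i,1}}$, in the factor $1/(t_{\alpha,1}!\,t_{\alpha,2}!)$, and inside the power $\bigl(\ell-1-\sum_{\alpha\le i\le\lambda}t_{i,1}(r_i-1)\bigr)^{v_0-\sum_{1\le i<\alpha}v_i}$. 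Putting $N:=\ell-1-\sum_{\alpha<i\le\lambda}t_{i,1}(r_i-1)$ and $w:=v_0-\sum_{1\le i<\alpha}v_i$, I would expand $\bigl(N-t_{\alpha,1}(r_\alpha-1)\bigr)^{w}$ by the binomial theorem and collapse the remaining $t_{\alpha,1}$-dependence via \eqref{eq.stirling2} in the form
\[
\sum_{0\le t_{\alpha,1}\le t_\alpha}\frac{(-1)^{t_{\alpha,1}}}{t_{\alpha,1}!\,(t_\alpha-t_{\alpha,1})!}\,t_{\alpha,1}^{v_\alpha}=(-1)^{t_\alpha}\,\bold{S}(v_\alpha,t_\alpha).
\]
Reassembling, the exponent of $N$ drops to $v_0-\sum_{1\le i\le\alpha}v_i$, a new index $v_\alpha$ with $0\le v_\alpha\le v_0-\sum_{j<\alpha}v_j$ appears, and the factor $(-1)^{t_\alpha}(1-r_\alpha)^{v_\alpha}\bold{S}(v_\alpha,t_\alpha)\binom{v_0-\sum_{j<\alpha}v_j}{v_\alpha}$ joins the product over $1\le i\le\alpha$; this is precisely the right-hand side of \eqref{eq.uptoalpha} at level $\alpha$. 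Taking $\alpha=\lambda$ in this computation, with the empty-sum/empty-product conventions above, yields (c) in the form \eqref{eq.lambda}.

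The hard part is not conceptual but the sheer index bookkeeping: one must keep straight which of the $t_{i,1}$ have already been summed out and which are still fixed, verify that the exponent of the single surviving linear form $\ell-1-\sum_{1\le i\le\lambda}t_{i,1}(r_i-1)$ drops by exactly $v_\alpha$ at each stage, and check that the successive ranges of the auxiliary indices nest correctly into the constraints $0\le v_j\le v_0-\sum_{k<j}v_k$. The two Stirling identities do all the real work — \eqref{eq.stirling1} once, to convert the falling factorial in the base case into a polynomial in $x$, and \eqref{eq.stirling2} at every inductive step, to collapse the alternating binomial sum over $t_{\alpha,1}$ — while everything else reduces to the binomial theorem and cancellation of factorials.
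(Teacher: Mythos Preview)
Your proposal is correct and follows essentially the same route as the paper: part (a) is obtained by unwinding the definition of $d$, cancelling the factor $(\ell-\sum t_{i,1}r_i)!$, and applying \eqref{eq.stirling1} to the resulting falling factorial of length $s-1$; parts (b) and (c) are then proved by induction on $\alpha$, at each step writing the $\alpha$-level sum as $\sum_{t_{\alpha,1}}$ of the $(\alpha-1)$-level expression, binomially expanding the power $(N-t_{\alpha,1}(r_\alpha-1))^{w}$, and collapsing the alternating sum over $t_{\alpha,1}$ via \eqref{eq.stirling2}. The only differences from the paper are cosmetic (you package the $t_\alpha!$ into the summand rather than pulling it out), and your explicit remarks about the empty-sum/empty-product conventions at $\alpha=\lambda$ match the paper's treatment of part (c).
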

		
		\begin{proof}[Proof of Claim \ref{cl.recur}]

			(a) Assume that $\alpha=0$. Then
			{\allowdisplaybreaks
				\begin{align*}
					& d_{r_1,\dots,r_\lambda}^{t_{1,1},\dots,t_{\lambda,1}}\binom{\ell-\sum\limits_{1\le i\le \lambda} t_{i,1}r_i}{t_{1,2},\dots,t_{\lambda,2},\ell-\sum\limits_{1\le i\le \lambda} t_{i,1}r_i-\sum\limits_{1\le i\le \lambda} t_{i,2}}\\
					&\qquad=\enspace\left((-1)^{-1+\sum\limits_{1\le i\le\lambda} t_{i,1}}\right)\left(\frac{(-1+\sum\limits_{1\le i\le\lambda} t_{i,1})!}{\prod\limits_{1\le i\le\lambda} t_{i,1}! }\right)\ell\binom{\ell-1-\sum\limits_{1\le i\le \lambda} t_{i,1}r_i+\sum\limits_{1\le i\le \lambda} t_{i,1}}{-1+\sum\limits_{1\le i\le \lambda} t_{i,1}}\cdot\\
					&\pushright{\binom{\ell-\sum\limits_{1\le i\le \lambda} t_{i,1}r_i}{t_{1,2},\dots,t_{\lambda,2},\ell-\sum\limits_{1\le i\le \lambda} t_{i,1}r_i-\sum\limits_{1\le i\le \lambda} t_{i,2}}}\\
					&\qquad=\enspace\left((-1)^{-1+\sum\limits_{1\le i\le\lambda} t_{i,1}}\right)\left(\frac{1}{\prod\limits_{1\le i\le\lambda} t_{i,1}! t_{i,2}! }\right)\ell\prod_{0\le j\le -2+\sum\limits_{1\le i\le \lambda} t_i} (\ell-1-\sum_{1\le i\le\lambda} t_{i,1}(r_i-1)-j)\\
					&\qquad\stackrel{\text{use }\eqref{eq.stirling1}}{=}\enspace\left((-1)^{-1+\sum\limits_{1\le i\le\lambda} t_{i,1}}\right)\left(\frac{1}{\prod\limits_{1\le i\le\lambda} t_{i,1}! t_{i,2}! }\right)\ell\sum_{0\le v_0\le -1+\sum\limits_{1\le i\le \lambda} t_i} \bold{s}(-1+\sum_{1\le i\le\lambda} t_i,v_0)\cdot\\
					&\pushright{(\ell-1-\sum_{1\le i\le\lambda} t_{i,1}(r_i-1))^{v_0}}.
			\end{align*}}%

			(b) We use induction on $\alpha$. Let $\alpha=1$. Then
			
			{\allowdisplaybreaks
				\begin{align*}
					&\sum_{\substack{ 0\le t_{1,1}\le t_1}} d_{r_1,\dots,r_\lambda}^{t_{1,1},\dots,t_{\lambda,1}}\binom{\ell-\sum\limits_{1\le i\le \lambda} t_{i,1}r_i}{t_{1,2},\dots,t_{\lambda,2},\ell-\sum\limits_{1\le i\le \lambda} t_{i,1}r_i-\sum\limits_{1\le i\le \lambda} t_{i,2}}\\
					&\qquad\stackrel{(a)}{=}\enspace\sum_{0\le t_{1,1}\le t_1}\left((-1)^{-1+\sum\limits_{1\le i\le\lambda} t_{i,1}}\right)\left( \frac{\ell}{\prod\limits_{1\le i\le\lambda} t_{i,1}! t_{i,2}! }\right)\sum_{0\le v_0\le -1+\sum\limits_{1\le i\le \lambda} t_i}  \bold{s}(-1+\sum\limits_{1\le i\le \lambda} t_i,v_0)\cdot\\
					&\pushright{(\ell-1-\sum\limits_{1\le i\le\ell} t_{i,1}(r_i-1))^{v_0}}	\\
					&\qquad=\enspace\sum_{0\le t_{1,1}\le t_1}\left((-1)^{-1+\sum\limits_{1\le i\le\lambda} t_{i,1}}\right)\left(\frac{\ell}{\prod\limits_{1\le i\le\lambda} t_{i,1}! t_{i,2}! }\right)\sum_{0\le v_0\le -1+\sum\limits_{1\le i\le \lambda} t_i}  \bold{s}(-1+\sum\limits_{1\le i\le \lambda} t_i,v_0)\cdot\\
					&\pushright{\sum_{0\le v_1\le v_0}\binom{v_0}{v_1}(\ell-1-\sum_{1<i\le\lambda} t_{i,1}(r_i-1))^{v_0-v_1}(t_{1,1}(1-r_1))^{v_1}}\\
					&\qquad=\enspace\left((-1)^{-1+\sum\limits_{1<i\le\lambda} t_{i,1}}\right)\left(\frac{\ell}{\prod\limits_{1< i\le\lambda} t_{i,1}! t_{i,2}! }\right)\sum_{\substack{0\le v_0\le -1+\sum\limits_{1\le i\le \lambda} t_i\\ 0\le v_1\le v_0}}\bold{s}(-1+\sum\limits_{1\le i\le \lambda} t_i,v_0)\cdot\\
					&\pushright{(\ell-1-\sum\limits_{1<i\le \lambda} t_{i,1}(r_i-1))^{v_0-v_1}(1-r_1)^{v_1}\binom{v_0}{v_1}\frac{1}{t_1 !}\sum_{0\le t_{1,1}\le t_1} (-1)^{t_{1,1}}\binom{t_1}{t_{1,1}}t_{1,1}^{v_1}}\\
					&\qquad\stackrel{\text{use }\eqref{eq.stirling2}}{=}\enspace\left((-1)^{-1+\sum\limits_{1<i\le\lambda} t_{i,1}}\right)\left(\frac{\ell}{\prod\limits_{1< i\le\lambda} t_{i,1}! t_{i,2}! }\right)\sum_{\substack{0\le v_0\le -1+\sum\limits_{1\le i\le \lambda} t_i\\ 0\le v_1\le v_0}}\bold{s}(-1+\sum\limits_{1\le i\le \lambda} t_i,v_0)\cdot\\
					&\pushright{(\ell-1-\sum\limits_{1<i\le \lambda} t_{i,1}(r_i-1))^{v_0-v_1}(1-r_1)^{v_1}\binom{v_0}{v_1}(-1)^{t_1} \bold{S}(v_1,t_1)}\\
					&\qquad=\enspace\left((-1)^{-1+\sum\limits_{1<i\le\lambda} t_{i,1}}\right)\left(\frac{\ell}{\prod\limits_{1< i\le\lambda} t_{i,1}! t_{i,2}! }\right)\sum_{\substack{0\le v_0\le -1+\sum\limits_{1\le i\le \lambda} t_i\\ 0\le v_1\le v_0}}\bold{s}(-1+\sum\limits_{1\le i\le \lambda} t_i,v_0)\cdot\\
					&\pushright{(\ell-1-\sum\limits_{1<i\le \lambda} t_{i,1}(r_i-1))^{v_0-v_1}(-1)^{t_1}(1-r_1)^{v_1}\bold{S}(v_1,t_1)\binom{v_0}{v_1} }.
			\end{align*}}%
			So the base case $\alpha=1$ is true. Assume that Claim \ref{cl.recur} is true up to $\alpha-1\le \lambda-2$. Then
			{\allowdisplaybreaks
				\begin{align*}
					&\sum_{\substack{ 0\le t_{1,1}\le t_1,\\
							\dots\\
							0\le t_{\alpha,1}\le t_\alpha}} d_{r_1,\dots,r_\lambda}^{t_{1,1},\dots,t_{\lambda,1}}\binom{\ell-\sum\limits_{1\le i\le \lambda} t_{i,1}r_i}{t_{1,2},\dots,t_{\lambda,2},\ell-\sum\limits_{1\le i\le \lambda} t_{i,1}r_i-\sum\limits_{1\le i\le \lambda} t_{i,2}}\\
					&\qquad=\enspace\sum_{0\le t_{\alpha,1}\le t_\alpha}\left(\sum_{\substack{ 0\le t_{1,1}\le t_1,\\
							\dots\\
							0\le t_{\alpha-1,1}\le t_{\alpha-1}}} d_{r_1,\dots,r_\lambda}^{t_{1,1},\dots,t_{\lambda,1}}\binom{\ell-\sum\limits_{1\le i\le \lambda} t_{i,1}r_i}{t_{1,2},\dots,t_{\lambda,2},\ell-\sum\limits_{1\le i\le \lambda} t_{i,1}r_i-\sum\limits_{1\le i\le \lambda} t_{i,2}}\right)	\\
					&\qquad\stackrel{(*)}{=}\enspace\sum_{0\le t_{\alpha,1}\le t_\alpha}\left((-1)^{-1+\sum\limits_{\alpha-1<i\le\lambda} t_{i,1}}\right)\left(\frac{\ell}{\prod\limits_{\alpha-1< i\le\lambda} t_{i,1}! t_{i,2}! }\right)\sum_{\substack{0\le v_0\le -1+\sum\limits_{1\le i\le \lambda} t_i\\ 0\le v_j\le v_0-\sum\limits_{1\le k<j} v_k\\ \text{for } 1\le j\le\alpha-1}}\bold{s}(-1+\sum\limits_{1\le i\le \lambda} t_i,v_0)\cdot\\
					&\pushright{(\ell-1-\sum\limits_{\alpha-1<i\le \lambda} t_{i,1}(r_i-1))^{v_0-\sum\limits_{1\le i\le\alpha-1} v_i}\prod_{1\le i\le \alpha-1} (-1)^{t_i}(1-r_i)^{v_i}\bold{S}(v_i,t_i)\binom{v_0-\sum\limits_{1\le j< i} v_j}{v_i}}\\
					&\qquad=\enspace\sum_{0\le t_{\alpha,1}\le t_\alpha}\left((-1)^{-1+\sum\limits_{\alpha-1<i\le\lambda} t_{i,1}}\right)\left(\frac{\ell}{\prod\limits_{\alpha-1< i\le\lambda} t_{i,1}! t_{i,2}! }\right)\sum_{\substack{0\le v_0\le -1+\sum\limits_{1\le i\le \lambda} t_i\\ 0\le v_j\le v_0-\sum\limits_{1\le k<j} v_k\\ \text{for } 1\le j\le\alpha-1}}\bold{s}(-1+\sum\limits_{1\le i\le \lambda} t_i,v_0)\cdot\\
					&\pushright{\sum_{0\le v_\alpha\le v_0-\sum\limits_{1\le i\le\alpha-1} v_i}\binom{v_0-\sum\limits_{1\le i\le\alpha-1} v_i}{v_\alpha}(\ell-1-\sum_{\alpha<i\le\lambda} t_{i,1}(r_i-1))^{v_0-\sum\limits_{1\le i\le\alpha} v_i}\cdot}\\
					&\pushright{(t_{\alpha,1}(1-r_\alpha))^{v_\alpha}\prod_{1\le i\le \alpha-1} (-1)^{t_i}(1-r_i)^{v_i}\bold{S}(v_i,t_i)\binom{v_0-\sum\limits_{1\le j< i} v_j}{v_i}}\\
					&\qquad=\enspace\left((-1)^{-1+\sum\limits_{\alpha<i\le\lambda} t_{i,1}}\right)\left(\frac{\ell}{\prod\limits_{\alpha< i\le\lambda} t_{i,1}! t_{i,2}! }\right)\sum_{\substack{0\le v_0\le -1+\sum\limits_{1\le i\le \lambda} t_i\\ 0\le v_j\le v_0-\sum\limits_{1\le k<j} v_k\\ \text{for } 1\le j\le\alpha}}\bold{s}(-1+\sum\limits_{1\le i\le \lambda} t_i,v_0)\cdot\\
					&\pushright{(\ell-1-\sum\limits_{\alpha<i\le \lambda} t_{i,1}(r_i-1))^{v_0-\sum\limits_{1\le i\le\alpha} v_i}\prod_{1\le i\le \alpha-1} (-1)^{t_i}(1-r_i)^{v_i}\bold{S}(v_i,t_i)\binom{v_0-\sum\limits_{1\le j< i} v_j}{v_i}\cdot}\\
					&\pushright{(1-r_\alpha)^{v_\alpha}\binom{v_0-\sum\limits_{1\le i\le\alpha-1} v_i}{v_\alpha}\frac{1}{t_\alpha !}\sum_{0\le t_{\alpha,1}\le t_\alpha} (-1)^{t_{\alpha,1}}\binom{t_\alpha}{t_{\alpha,1}}t_{\alpha,1}^{v_\alpha}}\\
					&\qquad\stackrel{\text{use }\eqref{eq.stirling2}}{=}\enspace\left((-1)^{-1+\sum\limits_{\alpha<i\le\lambda} t_{i,1}}\right)\left(\frac{\ell}{\prod\limits_{\alpha< i\le\lambda} t_{i,1}! t_{i,2}! }\right)\sum_{\substack{0\le v_0\le -1+\sum\limits_{1\le i\le \lambda} t_i\\ 0\le v_j\le v_0-\sum\limits_{1\le k<j} v_k\\ \text{for } 1\le j\le\alpha}}\bold{s}(-1+\sum\limits_{1\le i\le \lambda} t_i,v_0)\cdot\\
					&\pushright{(\ell-1-\sum\limits_{\alpha<i\le \lambda} t_{i,1}(r_i-1))^{v_0-\sum\limits_{1\le i\le\alpha} v_i}\prod_{1\le i\le \alpha-1} (-1)^{t_i}(1-r_i)^{v_i}\bold{S}(v_i,t_i)\binom{v_0-\sum\limits_{1\le j< i} v_j}{v_i}\cdot}\\
					&\pushright{(1-r_\alpha)^{v_\alpha}\binom{v_0-\sum\limits_{1\le i\le \alpha-1} v_i}{v_\alpha}(-1)^{t_\alpha} \bold{S}(v_\alpha,t_\alpha)}\\
					&\qquad=\enspace\left((-1)^{-1+\sum\limits_{\alpha<i\le\lambda} t_{i,1}}\right)\left(\frac{\ell}{\prod\limits_{\alpha< i\le\lambda} t_{i,1}! t_{i,2}! }\right)\sum_{\substack{0\le v_0\le -1+\sum\limits_{1\le i\le \lambda} t_i\\ 0\le v_j\le v_0-\sum\limits_{1\le k<j} v_k\\ \text{for } 1\le j\le\alpha}}\bold{s}(-1+\sum\limits_{1\le i\le \lambda} t_i,v_0)\cdot\\
					&\pushright{(\ell-1-\sum\limits_{\alpha<i\le \lambda} t_{i,1}(r_i-1))^{v_0-\sum\limits_{1\le i\le\alpha} v_i}\prod_{1\le i\le \alpha} (-1)^{t_i}(1-r_i)^{v_i}\bold{S}(v_i,t_i)\binom{v_0-\sum\limits_{1\le j< i} v_j}{v_i}.}
			\end{align*}}%
			Note that $(*)$ is by the induction hypothesis of the Claim \ref{cl.recur} for $\alpha-1$.	Hence part (b) of Claim \ref{cl.recur} is true. 
			
			(c) Formula \eqref{eq.lambda} comes from Formula \eqref{eq.uptoalpha} by taking $\alpha=\lambda$ and letting the empty sums $\sum\limits_{\alpha<i\le\lambda} t_{i,1}$ and $\sum\limits_{\alpha<i\le \lambda} t_{i,1}(r_i-1)$ to be $0$, and the empty product $\prod\limits_{\alpha< i\le\lambda} t_{i,1}! t_{i,2}!$ to be $1$. The proof in part (b) still works when $\alpha=\lambda$. This completes the proof of Claim \ref{cl.recur}.
		\end{proof}

		\noindent{\it Proof of Theorem \ref{thm.form} \ref{it.el_m} (continued).} From part (c) of Claim \ref{cl.recur}, we get that 
		{\allowdisplaybreaks	
			\begin{align}
				&\sum_{\substack{ 0\le t_{1,1}\le t_1\\
						\cdot\cdot\cdot\\
						0\le t_{\lambda,1}\le t_\lambda}} d_{r_1,\dots,r_\lambda}^{t_{1,1},\dots,t_{\lambda,1}}\binom{\ell-\sum\limits_{1\le i\le \lambda} t_{i,1}r_i}{t_{1,2},\dots,t_{\lambda,2},\ell-\sum\limits_{1\le i\le \lambda} t_{i,1}r_i-\sum\limits_{1\le i\le \lambda} t_{i,2}}\nonumber\\
				&=\enspace-\ell\sum_{\substack{0\le v_0\le -1+\sum\limits_{1\le i\le \lambda} t_i\\ 0\le v_j\le v_0-\sum\limits_{1\le k<j} v_k\\ \text{for } 1\le j\le\lambda}}\left(\bold{s}(-1+\sum\limits_{1\le i\le \lambda} t_i,v_0)(\ell-1)^{v_0-\sum\limits_{1\le i\le\lambda} v_i}\prod_{1\le i\le \lambda} (-1)^{t_i}(1-r_i)^{v_i}\bold{S}(v_i,t_i)\binom{v_0-\sum\limits_{1\le j< i} v_j}{v_i}\right)\label{eq.sum}\\
				&=\enspace 0\nonumber.
		\end{align}}%
		The last equality holds because each term in the sum of line \eqref{eq.sum} is $0$. Indeed, in the factor $\prod_{1\le i\le \lambda} (-1)^{t_i}(1-r_i)^{v_i}\bold{S}(v_i,t_i)\binom{v_0-\sum\limits_{1\le j< i} v_j}{v_i}$, there exists $1\le i\le \lambda$ such that $v_i<t_i$ and hence $\bold{S}(v_i,t_i)=0$. We can show this by contradiction. If $v_i\ge t_i$ for each $1\le i\le \lambda$, then $v_0-\sum\limits_{0\le i\le\lambda} v_i\le v_0-\sum\limits_{0\le i\le\lambda} t_i\le -1$. But $v_\lambda\le v_0-\sum\limits_{1\le i<\lambda} v_i$ and therefore $v_0-\sum\limits_{0\le i\le\lambda} v_i\ge 0$. This is a contradiction.

		Note that if $(t_{1,1},\dots,t_{\lambda,1})= (t_1,\dots,t_\lambda)$, then $$\binom{\ell-\sum\limits_{1\le i\le \lambda} t_{i,1}r_i}{t_{1,2},\dots,t_{\lambda,2},\ell-\sum\limits_{1\le i\le \lambda} t_{i,1}r_i-\sum\limits_{1\le i\le \lambda} t_{i,2}}=\binom{\ell-\sum\limits_{1\le i\le \lambda} t_{i,1}r_i}{0,\dots,0,\ell-\sum\limits_{1\le i\le \lambda} t_{i,1}r_i}=1.$$ 
		Hence,
		\begin{equation*}
			d_{r_1,\dots,r_\lambda}^{t_1,\dots,t_\lambda}=-\sum_{\substack{ 0\le t_{1,1}\le t_1\\
					\cdot\cdot\cdot\\
					0\le t_{\lambda,1}\le t_\lambda\\
					(t_{1,1},\dots,t_{\lambda,1})\ne \\
					(t_1,\dots,t_\lambda)}} d_{r_1,\dots,r_\lambda}^{t_{1,1},\dots,t_{\lambda,1}}\binom{\ell-\sum\limits_{1\le i\le \lambda} t_{i,1}r_i}{t_{1,2},\dots,t_{\lambda,2},\ell-\sum\limits_{1\le i\le \lambda} t_{i,1}r_i-\sum\limits_{1\le i\le \lambda} t_{i,2}}.
		\end{equation*}
		Therefore, Equation \eqref{eq.coeff} holds. This completes the proof of part \ref{it.el_m} of Theorem \ref{thm.form}. 
		
		From \eqref{eq.coeff}, we know that $d_{r_1,\dots,r_\lambda}^{t_1,\dots,t_\lambda}$ can be represented in terms of $d_{r_1,\dots,r_\lambda}^{t_{1,1},\dots,t_{\lambda,1}}$ with $0\le t_{i,1}\le t_i$, $0\le \sum r_i t_{i,1}<\sum r_i t_i$, and the binomial coefficients using product, summation, and negation. Because $d_{r_1,\dots,r_\lambda}^{0,\dots,0}=-1$ is an integer, we get that $d_{r_1,\dots,r_\lambda}^{t_1,\dots,t_\lambda}$ is an integer. This proves Theorem \ref{thm.form} \ref{it.int} when $0<m<\ell$.
		
		\vspace{0.2cm}
		
		{\it Proof of Theorem \ref{thm.form} \ref{it.el_0}}	We start with the following claim.
		\begin{claim}\label{cl.coeff5}
			The coefficient of the term $q^{-\ell^2}$ in the $q$-expansion of the left-hand side of \eqref{eq.construction}, i.e. $j(\ell z)^{\ell+1}+j(z)^{\ell+1}+\sum\limits_{0\le r,s\le \ell}a_{r,s}j(\ell z)^r j(z)^s$, is 
			\begin{equation}\label{eq.coeff4}
				\aligned
				a_{\ell,0}+&a_{\ell-1,\ell}+a_{\ell,\ell}\ell c_0+(\ell+1)c_0+\\
				&\sum_{0\le n\le \ell-1} a_{\ell,\ell-n}\left(\sum\limits_{\substack{t_1+t_2 2+\cdots+t_{\ell-n}(\ell-n)=\ell-n\\
						t_i\ge 0}}\binom{\ell-n}{t_1,\dots,t_{\ell-n},\ell-n-\sum\limits_{1\le i\le \ell-n} t_i}c_0^{t_1}\cdots c_{\ell-n-1}^{t_{\ell-n}}\right).
				\endaligned
			\end{equation}
		\end{claim}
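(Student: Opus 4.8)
The plan is to compute the coefficient of $q^{-\ell^2}$ separately in each summand of $j(\ell z)^{\ell+1}+j(z)^{\ell+1}+\sum_{0\le r,s\le\ell}a_{r,s}j(\ell z)^rj(z)^s$, imitating the proof of Claim \ref{cl.coeff3} but tracking the three features special to the exponent $-\ell^2$ (the ``$m=\ell$'' case): the term $j(\ell z)^{\ell+1}$ now contributes, the off-diagonal coefficient $a_{\ell-1,\ell}$ now contributes, and in the diagonal term with $n=0$ one must keep the second-lowest-order term of $\big(\sum_{i\ge 0}c_{i-1}q^{\ell i}\big)^{\ell}$.

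First I would handle the pure powers. Writing $j(\ell z)=q^{-\ell}\sum_{i\ge 0}c_{i-1}q^{\ell i}$, the coefficient of $q^{-\ell^2}$ in $j(\ell z)^{\ell+1}$ equals the coefficient of $q^{\ell}$ in $\big(\sum_{i\ge 0}c_{i-1}q^{\ell i}\big)^{\ell+1}=(1+c_0q^{\ell}+\cdots)^{\ell+1}$, namely $\binom{\ell+1}{1}c_0=(\ell+1)c_0$. The term $j(z)^{\ell+1}$ has lowest $q$-degree $-(\ell+1)>-\ell^2$ (as $\ell\ge 3$), so it contributes $0$. Next I would identify which $a_{r,s}j(\ell z)^rj(z)^s$ can contribute, using that this product has lowest $q$-degree $-\ell r-s$: for $0\le r,s\le\ell$ the inequality $-\ell r-s\le-\ell^2$ forces either $r=\ell$ (any $s$) or $(r,s)=(\ell-1,\ell)$, and in the latter case $-\ell r-s=-\ell^2$, so only the leading coefficient $1\cdot 1$ matters and the contribution is $a_{\ell-1,\ell}$.

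For $r=\ell$ and $s=\ell-n$ with $0\le n\le\ell$, I would write
\[
j(\ell z)^{\ell} j(z)^{\ell-n}=q^{-\ell^2-\ell+n}\Big(\sum_{i\ge0}c_{i-1}q^{\ell i}\Big)^{\ell}\Big(\sum_{i\ge0}c_{i-1}q^{i}\Big)^{\ell-n},
\]
so the coefficient of $q^{-\ell^2}$ is the coefficient of $q^{\ell-n}$ in the product of the two series. Since $\big(\sum_{i\ge0}c_{i-1}q^{\ell i}\big)^{\ell}=1+\ell c_0q^{\ell}+O(q^{2\ell})$: for $0<n\le\ell-1$ only the constant term $1$ contributes, and — exactly as in the computation of $A_n$ in the proof of Claim \ref{cl.A_n}, via the Multinomial Theorem applied to $\big(\sum_{i\ge0}c_{i-1}q^{i}\big)^{\ell-n}$ — the contribution is $a_{\ell,\ell-n}$ times the multinomial sum displayed in \eqref{eq.coeff4}; for $n=\ell$ the product contributes $1\cdot 1$ at degree $0$, giving $a_{\ell,0}$; and for $n=0$ both the $1$ and the $\ell c_0q^{\ell}$ contribute, giving $a_{\ell,\ell}$ times [the multinomial sum for $n=0$] plus $a_{\ell,\ell}\,\ell c_0$. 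Adding the pieces $(\ell+1)c_0$, $a_{\ell-1,\ell}$, $a_{\ell,0}$, $a_{\ell,\ell}\,\ell c_0$, and $\sum_{0\le n\le\ell-1}a_{\ell,\ell-n}(\cdots)$ — the last already including the $a_{\ell,\ell}$-multinomial piece at $n=0$ — yields exactly \eqref{eq.coeff4}.

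The only genuinely new point relative to Claims \ref{cl.coeff3} and \ref{cl.A_n}, and the place to be careful, is the $n=0$ term: because $\ell-n=\ell$ now equals the gap between the two smallest exponents appearing in $\big(\sum_{i\ge0}c_{i-1}q^{\ell i}\big)^{\ell}$, one cannot discard the term $\ell c_0q^{\ell}$, and this is precisely the source of the summand $a_{\ell,\ell}\,\ell c_0$ in \eqref{eq.coeff4}. The remaining bookkeeping (the vanishing of $j(z)^{\ell+1}$ and of all $a_{r,s}$ with $r\le\ell-2$ or $(r,s)\ne(\ell-1,\ell),(\ell,*)$) is a routine adaptation of what was already done for Claim \ref{cl.coeff3}.
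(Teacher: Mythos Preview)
Your proposal is correct and follows essentially the same approach as the paper: both compute the coefficient of $q^{-\ell^2}$ term-by-term, isolate the same contributing pieces ($j(\ell z)^{\ell+1}$ giving $(\ell+1)c_0$, $a_{\ell-1,\ell}j(\ell z)^{\ell-1}j(z)^{\ell}$ giving $a_{\ell-1,\ell}$, and the $r=\ell$ terms handled via the Multinomial Theorem), and single out the $n=0$ case where the second-lowest term $\ell c_0 q^{\ell}$ of $\big(\sum_{i\ge 0}c_{i-1}q^{\ell i}\big)^{\ell}$ must be retained, yielding the extra $a_{\ell,\ell}\,\ell c_0$. The paper packages the $r=\ell$ computations into a separate subclaim (Claim~\ref{cl.A_n_l_0}) with parts for $A_0$ and $A_n$ ($1\le n\le\ell-1$), but the content is the same as yours.
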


		Assume that $0\le r\le \ell$ and $0\le s\le\ell+1$ satisfy one of the following three conditions: (a) $r=0$ and $s=\ell+1$; (b) $0\le r\le\ell-2$ and $0\le s\le \ell$; (c) $r=\ell-1$ and $0\le s\le \ell-1$. Then the lowest degree in the $q$-expansion of $a_{r,s}j(\ell z)^rj(z)^s$ is greater than $-\ell^2$.

		The coefficients of the term $q^{-\ell^2}$ in the $q$-expansion of $j(\ell z)^{\ell}$ and $j(\ell z)^{\ell-1}j(z)^\ell$ are both $1$. The coefficient of the term $q^{-\ell^2}$ in the $q$-expansion of $j(\ell z)^{\ell+1}$ is $(\ell+1)c_0$. This is clear because $j(\ell z)^{\ell+1}=q^{-\ell(\ell+1)}(\sum\limits_{i=0}^\infty c_{i-1} q^{\ell i})^{\ell+1}$.

		\begin{claim}\label{cl.A_n_l_0}
			Let $1\le n\le \ell-n$ and let $A_n$ be the coefficient of the term $q^{-\ell^2}$ in the $q$-expansion of $j(\ell z)^{\ell}j(z)^{\ell-n}$. Then the following is true.
			\begin{enumerate}[label=(\alph*)]
				\item\label{it.A_0} $$A_0=\ell c_0+\sum\limits_{\substack{t_1+t_2 2+\cdots+t_{\ell}\ell=\ell\\
						t_i\ge 0}}\binom{\ell}{t_1,\dots,t_{\ell},\ell-\sum\limits_{1\le i\le \ell} t_i}c_0^{t_1}\cdots c_{\ell-1}^{t_{\ell}}.$$
				\item\label{it.A_n} If $1\le n\le\ell-1$, then $$A_n=\sum\limits_{\substack{t_1+t_2 2+\cdots+t_{\ell-n}(\ell-n)=\ell-n\\
						t_i\ge 0}}\binom{\ell-n}{t_1,\dots,t_{\ell-n},\ell-n-\sum\limits_{1\le i\le \ell-n} t_i}c_0^{t_1}\cdots c_{\ell-n-1}^{t_{\ell-n}}.$$
			\end{enumerate}
		\end{claim}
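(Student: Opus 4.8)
The plan is to mimic, almost verbatim, the proof of Claim \ref{cl.A_n}, but to be careful about one extra low-degree contribution that shows up only in the case $n=0$. First I would use $j(z)=q^{-1}\sum_{i=0}^\infty c_{i-1}q^i$ and $j(\ell z)=q^{-\ell}\sum_{i=0}^\infty c_{i-1}q^{\ell i}$ to write
$$j(\ell z)^\ell j(z)^{\ell-n}=q^{-\ell^2-\ell+n}\Bigl(\sum_{i=0}^\infty c_{i-1}q^{\ell i}\Bigr)^{\!\ell}\Bigl(\sum_{i=0}^\infty c_{i-1}q^{i}\Bigr)^{\!\ell-n},$$
so that $A_n$ is exactly the coefficient of $q^{\ell-n}$ in the product $P(q):=\bigl(\sum_{i\ge 0}c_{i-1}q^{\ell i}\bigr)^{\ell}\bigl(\sum_{i\ge 0}c_{i-1}q^{i}\bigr)^{\ell-n}$. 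Next I would record the low-order behaviour of the first factor: since $c_{-1}=1$, the exponents occurring in $\sum_{i\ge 0}c_{i-1}q^{\ell i}$ are all multiples of $\ell$, so $\bigl(\sum_{i\ge 0}c_{i-1}q^{\ell i}\bigr)^{\ell}=1+\ell c_0 q^{\ell}+(\text{terms of degree}\ge 2\ell)$ — the only ways to reach degree $<2\ell$ are to take the constant term from each of the $\ell$ factors, or $c_0q^{\ell}$ from exactly one factor and $1$ from the rest. The second factor $\bigl(\sum_{i\ge 0}c_{i-1}q^{i}\bigr)^{\ell-n}$ has only nonnegative exponents and constant term $c_{-1}^{\ell-n}=1$.

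For part \ref{it.A_n}, where $1\le n\le \ell-1$, the target degree satisfies $0<\ell-n<\ell$, so only the constant term $1$ of the first factor of $P(q)$ can contribute; hence $A_n$ equals the coefficient of $q^{\ell-n}$ in $\bigl(\sum_{i\ge 0}c_{i-1}q^{i}\bigr)^{\ell-n}$. I would then truncate exactly as in Claim \ref{cl.A_n}: writing $\sum_{i\ge 0}c_{i-1}q^i=j_1(z)+j_2(z)$ with $j_1(z)=\sum_{i=0}^{\ell-n}c_{i-1}q^i$, every summand of the binomial expansion involving $j_2(z)$ has degree $\ge \ell-n+1$, so $A_n$ is the coefficient of $q^{\ell-n}$ in $j_1(z)^{\ell-n}$; expanding by the Multinomial Theorem and extracting $q^{\ell-n}$ (which forces $t_1+2t_2+\cdots+(\ell-n)t_{\ell-n}=\ell-n$) gives precisely the claimed formula.

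For part \ref{it.A_0}, where $n=0$, the target degree is exactly $\ell$, which coincides with the second-lowest exponent appearing in $\bigl(\sum_{i\ge 0}c_{i-1}q^{\ell i}\bigr)^{\ell}$. So there are two contributions to the coefficient of $q^{\ell}$ in $P(q)$: the term $\ell c_0 q^{\ell}$ of the first factor times the constant term $1$ of the second (giving $\ell c_0$), and the constant term $1$ of the first factor times the coefficient of $q^{\ell}$ of the second (nothing else can contribute, since the $q^{2\ell}$ and higher terms of the first factor already overshoot). The second of these, by the same truncation ($j_1(z)=\sum_{i=0}^{\ell}c_{i-1}q^i$) and the Multinomial Theorem, equals $\sum_{\substack{t_1+\cdots+\ell t_\ell=\ell,\ t_i\ge 0}}\binom{\ell}{t_1,\dots,t_\ell,\ell-\sum t_i}c_0^{t_1}\cdots c_{\ell-1}^{t_\ell}$, and adding $\ell c_0$ yields the formula in \ref{it.A_0}.

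The only genuine subtlety — the sole place this argument departs from the proof of Claim \ref{cl.A_n} — is the bookkeeping in part \ref{it.A_0}: one must \emph{not} discard the $\ell c_0 q^{\ell}$ term of $\bigl(\sum_{i\ge 0}c_{i-1}q^{\ell i}\bigr)^{\ell}$, because the exponent being extracted sits exactly at that degree. This is the origin of the extra summand $\ell c_0$ in \ref{it.A_0}, and, one step downstream in Claim \ref{cl.coeff5}, of the boundary terms $a_{\ell-1,\ell}$, $a_{\ell,\ell}\,\ell c_0$, and $(\ell+1)c_0$. Everything else is a routine repetition of the truncation-plus-multinomial computation already performed.
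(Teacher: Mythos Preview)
Your proposal is correct and follows essentially the same approach as the paper: factor out $q^{-\ell^2-\ell+n}$, observe that $\bigl(\sum_{i\ge 0}c_{i-1}q^{\ell i}\bigr)^{\ell}=1+\ell c_0 q^{\ell}+(\text{degree}\ge 2\ell)$, so that for $1\le n\le\ell-1$ only the constant term contributes while for $n=0$ the extra $\ell c_0$ term must be kept, and then invoke the truncation-plus-multinomial computation from Claim~\ref{cl.A_n}. The paper's proof is slightly terser in that it simply cites Claim~\ref{cl.A_n} for the multinomial step rather than rewriting it, but the content is identical.
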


		Recall that
		$$j(\ell z)^\ell j(z)^{\ell}=q^{-\ell^2-\ell}(\sum\limits_{i=0}^\infty c_{i-1} q^{\ell i})^\ell(\sum\limits_{i=0}^\infty c_{i-1} q^i)^{\ell}.$$

		In the $q$-expansion of $(\sum\limits_{i=0}^\infty c_{i-1} q^{\ell i})^\ell$, the lowest degree term is $1$, the second lowest degree term is $\ell c_0 q^\ell$, and the third lowest degree is $2\ell$. So $A_0$ is the sum of $\ell c_0$ and the coefficient of $q^\ell$ in the $q$-expansion of $(\sum\limits_{i=0}^\infty c_{i-1} q^i)^{\ell}$.

		Assume that $1\le n\le\ell-1$. Then
		$$j(\ell z)^\ell j(z)^{\ell-n}=q^{-\ell^2-\ell+n}(\sum\limits_{i=0}^\infty c_{i-1} q^{\ell i})^\ell(\sum\limits_{i=0}^\infty c_{i-1} q^i)^{\ell-n}.$$

		In the $q$-expansion of $(\sum\limits_{i=0}^\infty c_{i-1} q^{\ell i})^\ell$, the lowest degree term is $1$, the second lowest degree is $\ell$. So $A_n$ is the coefficient of $q^{\ell-n}$ in the $q$-expansion of $(\sum\limits_{i=0}^\infty c_{i-1} q^i)^{\ell-n}$.
		By the Multinomial Theorem as used in the proof of Claim \ref{cl.A_n}, we know that part \ref{it.A_0} and \ref{it.A_n} of Claim \ref{cl.A_n_l_0} hold. 
		
		It follows that the coefficient of the term $q^{-\ell^2}$ in the $q$-expansion of $j(\ell z)^{\ell+1}+j(z)^{\ell+1}+\sum\limits_{0\le r,s\le \ell}a_{r,s}j(\ell z)^r j(z)^s$ is \eqref{eq.coeff4}. Therefore Claim \ref{cl.coeff5} is true.
		
		Since $\Phi_\ell(j(\ell z), j(z))=0$ in \eqref{eq.construction}, we find that the expression \eqref{eq.coeff4} equals $0$. Therefore,
		\begin{equation*}
			\aligned
			a_{\ell,0}=\enspace&
			-a_{\ell-1,\ell}-a_{\ell,\ell}\ell c_0-(\ell+1)c_0-\\
			&\sum_{0\le n\le \ell-1} a_{\ell,\ell-n}\left(\sum\limits_{\substack{t_1+t_2 2+\cdots+t_{\ell-n}(\ell-n)=\ell-n\\
					t_i\ge 0}}\binom{\ell-n}{t_1,\dots,t_{\ell-n},\ell-n-\sum\limits_{1\le i\le \ell-n} t_i}c_0^{t_1}\cdots c_{\ell-n-1}^{t_{\ell-n}}\right)\\
			=\enspace& -(\ell+1)c_0-\\
			&\sum_{0\le n\le \ell-1} a_{\ell,\ell-n}\left(\sum\limits_{\substack{t_1+t_2 2+\cdots+t_{\ell-n}(\ell-n)=\ell-n\\
					t_i\ge 0}}\binom{\ell-n}{t_1,\dots,t_{\ell-n},\ell-n-\sum\limits_{1\le i\le \ell-n} t_i}c_0^{t_1}\cdots c_{\ell-n-1}^{t_{\ell-n}}\right),
			\endaligned
		\end{equation*}
		where we used the facts that $a_{\ell,\ell}=-1$ and $a_{\ell-1,\ell}=a_{\ell,\ell-1}=\ell c_0$ proved in Theorem \ref{thm.form} \ref{it.el_m}.
		
		The rest of the proof of Theorem \ref{thm.form} \ref{it.el_0} is the same as the proof of Theorem \ref{thm.form} \ref{it.el_m}. We simply take $m=\ell$ after the line \eqref{eq.ind} in the proof of Theorem \ref{thm.form} \ref{it.el_m}. The case $m=\ell$ in Theorem \ref{thm.form} \ref{it.int} follows as in the case $0<m<\ell$. The proof of Theorem \ref{thm.form} is now complete.
	\end{proof}
\end{emp}

\begin{comment}
	\begin{remark}
		The factor $\frac{(-1+\sum t_i)!}{t_1!\cdots t_\lambda!}$ in the summand on the right hand side of \eqref{eq.a_l_0} is an integer. This follows from the proof of Theorem \ref{thm.form}. Note that this is true in general if $\operatorname{gcd}(t_1,\dots,t_\lambda)=1$. 
	\end{remark}
\end{comment}

\begin{example}\label{eg.firstfew}
	For use later in Proposition \ref{prop.mle7}, we list explicitly the first few coefficients $a_{\ell,\ell-m}$ with $m<\ell$ in Theorem \ref{thm.form}: $a_{\ell,\ell-1}=\ell c_0$, $a_{\ell,\ell-2}=\ell c_1-\binom{\ell}{2}c_0^2$, $a_{\ell,\ell-3}=\ell c_2-\ell(\ell-2)c_0 c_1+\binom{\ell}{3}c_0^3$, 
	
	{\allowdisplaybreaks
		\begin{align*}
			a_{\ell,\ell-4}&=\ell c_3-\ell(\ell-3)(\frac{c_1^2}{2}+c_0c_2)+\ell\binom{\ell-2}{2}c_0^2c_1-\binom{\ell}{4}c_0^4,\\
			a_{\ell,\ell-5}&=\ell c_4-\ell(\ell-4)(c_0 c_3+c_1 c_2)+\ell\binom{\ell-3}{2}(c_0^2c_2+c_0 c_1^2)-\ell\binom{\ell-2}{3}c_0^3 c_1+\binom{\ell}{5}c_0^5,\\
			a_{\ell,\ell-6}&=\ell c_5-\ell(\ell-5)(c_4 c_0+c_3 c_1+\frac{1}{2}c_2^2)+\ell\binom{\ell-4}{2}(c_3 c_0^2+2c_2 c_1 c_0+\frac{1}{3}c_1^3)\\
			&\pushright{-\ell\binom{\ell-3}{3}(c_2 c_0^3+\frac{3}{2}c_1^2 c_0^2)+\ell\binom{\ell-2}{4}c_1 c_0^4-\binom{\ell}{6}c_0^6},\\
			a_{\ell,\ell-7}&=\ell c_6-\ell(\ell-6)(c_2c_3+c_0c_5+c_1c_4)+\ell\binom{\ell-5}{2}(c_0^2c_4+2c_0c_1c_3+c_1^2c_2+c_0c_2^2)\\
			&\pushright{-\ell\binom{\ell-4}{3}(3c_0^2c_1c_2+c_0^3c_3+c_0 c_1^3)+\ell\binom{\ell-3}{4}(c_0^4c_2+2c_0^3c_1^2)-\ell\binom{\ell-2}{5}c_0^5c_1+\binom{\ell}{7}c_0^7}.
	\end{align*}}%
\end{example}

We now use Theorem \ref{thm.form} to provide some evidence supporting Conjecture \ref{conj.div}. Recall that part (a) of Conjecture \ref{conj.div} implies that $a_{\ell,0}$ is divisible by $2^{15}$. In our next proposition, we prove divisibilities by powers of $2$ for more general coefficients $a_{\ell,\ell-m}$ with $m\le\ell$.

\begin{proposition}\label{prop.2Div}
	
	Let $\ell$ be an odd prime and let $0<m\le \ell$ be an integer. Let $a_{\ell,\ell-m}$ be the coefficient of the $\ell$-th classical modular polynomial $\Phi_\ell(X,Y)$. Then

	\begin{enumerate}[label=(\roman*)]
		\item If $m\equiv 4\operatorname{mod} 8$, then $a_{\ell,\ell-m}\equiv 0\operatorname{mod} 2$.
		\item If $m\equiv 2\operatorname{mod} 4$, then $a_{\ell,\ell-m}\equiv 0\operatorname{mod} 2^2$.
		\item If $m\equiv 1\operatorname{mod} 8$, then $a_{\ell,\ell-m}\equiv 0\operatorname{mod} 2^3$.
		\item If $m\equiv 5\operatorname{mod} 8$, then 
		$a_{\ell,\ell-m}\equiv 0\operatorname{mod} 2^4$.
		\item If $m\equiv 3\operatorname{mod} 4$, then $a_{\ell,\ell-m}\equiv 0\operatorname{mod} 2^5$. 
	\end{enumerate}

\end{proposition}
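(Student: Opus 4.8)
The plan is to feed the closed formula of Theorem~\ref{thm.form} into a term-by-term $2$-adic estimate and then run a short case analysis on $m\bmod 8$. The $2$-adic input I would use is: $\operatorname{ord}_2(c_0)=3$ (immediate from $c_0=744=2^3\cdot 93$); $\operatorname{ord}_2(c_n)\ge 11$ for every even $n\ge 2$ (Lehner's congruence $c_n\equiv 0\bmod 2^{3a+8}$ whenever $2^a\Vert n$ with $a\ge 1$); $\operatorname{ord}_2(c_n)\ge 2$ for $n\equiv 1\bmod 4$; and $\operatorname{ord}_2(c_n)\ge 1$ for $n\equiv 3\bmod 8$. The last two are elementary: from $j=E_4^3\,q^{-1}\prod_{k\ge 1}(1-q^k)^{-24}$ together with $E_4\equiv 1\bmod 16$ and the congruences $(1-x)^2\equiv 1-x^2\bmod 2$, $(1-x)^4\equiv(1-x^2)^2\bmod 4$, iterating gives $\prod_k(1-q^k)^{-24}\equiv\prod_k(1-q^{8k})^{-3}\bmod 2$ and $\equiv\prod_k(1-q^{4k})^{-6}\bmod 4$, so $qj$ lies in $\mathbb Z[[q^8]]$ modulo $2$ and in $\mathbb Z[[q^4]]$ modulo $4$; reading off exponents yields $c_n\equiv 0\bmod 2$ unless $n\equiv 7\bmod 8$ and $c_n\equiv 0\bmod 4$ unless $n\equiv 3\bmod 4$.

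Next I would reduce to a combinatorial inequality. For $0<m<\ell$ use Theorem~\ref{thm.form}\ref{it.el_m}, and for $m=\ell$ use Theorem~\ref{thm.form}\ref{it.el_0}; in the latter case the extra term $-(\ell+1)c_0$ has $\operatorname{ord}_2=\operatorname{ord}_2(\ell+1)+3$, which is at least the claimed exponent by a one-line check (here $\ell\equiv m\bmod 8$, and $\operatorname{ord}_2(\ell+1)\ge 1$, with $\ge 2$ if $\ell\equiv 3\bmod 4$ and $\ge 3$ if $\ell\equiv 7\bmod 8$), and by Theorem~\ref{thm.form}\ref{it.int} the remaining coefficients are integers (also when $m=\ell$, where the coefficient equals $\tfrac{u!}{t_1!\cdots t_\lambda!}\ell$), hence have nonnegative $2$-adic valuation. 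Thus $a_{\ell,\ell-m}$ is a $\mathbb Z$-combination of monomials $c_{r_1-1}^{t_1}\cdots c_{r_\lambda-1}^{t_\lambda}$ over partitions $t_1r_1+\cdots+t_\lambda r_\lambda=m$, and it suffices to show, for each such partition,
\[
W:=\sum_i t_i\,w(r_i)\ \ge\ \begin{cases}1 & \text{if }m\equiv 4,\\ 2 & \text{if }m\equiv 2,6,\\ 3 & \text{if }m\equiv 1,\\ 4 & \text{if }m\equiv 5,\\ 5 & \text{if }m\equiv 3,7,\end{cases}\pmod 8,
\]
where by the input above we may take $w(r)=0$ for $r\equiv 0\bmod 8$, $w(r)=1$ for $r\equiv 4\bmod 8$, $w(r)=2$ for $r\equiv 2,6\bmod 8$, $w(1)=3$, and $w(r)=11$ for odd $r\ge 3$.

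For the case analysis: a part $r$ contributes $w(r)t_i$ to $W$ and $rt_i$ to $m$, and the parts with $w(r)=0$ are exactly those with $r\equiv 0\bmod 8$, contributing $0$ to $m\bmod 8$. If $m\equiv 4\bmod 8$ the parts cannot all be $\equiv 0\bmod 8$, so $W\ge 1$. If $m\equiv 2,6\bmod 8$, then $W\le 1$ would force the parts to be all $\equiv 0\bmod 8$ (giving $m\equiv 0$) or all $\equiv 0\bmod 8$ except one part $\equiv 4\bmod 8$ of multiplicity $1$ (giving $m\equiv 4$), both impossible, so $W\ge 2$. If $m$ is odd, some part $r$ is odd; if $r\ge 3$ then $W\ge 11$, so we may assume $r=1$ is the only odd part, with $t_1$ necessarily odd. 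If $t_1\ge 3$ then $W\ge 9$; if $t_1=1$ the even parts sum to $m-1$, which is $\equiv 0\bmod 8$, $\equiv 4\bmod 8$, or $\equiv 2\bmod 4$ according as $m\equiv 1$, $m\equiv 5$, or $m\equiv 3,7\bmod 8$, forcing the even parts to contribute respectively $\ge 0$, $\ge 1$, $\ge 2$ to $W$ (in the second case because not all even parts can be $\equiv 0\bmod 8$; in the third because some even part must be $\equiv 2\bmod 4$, whence $w\ge 2$), so $W\ge 3,4,5$. Since every term of the formula then has $2$-adic valuation at least the claimed exponent, so does the sum $a_{\ell,\ell-m}$.

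The step I expect to be the main obstacle is the $2$-adic input, specifically the bound $\operatorname{ord}_2(c_n)\ge 5$ for even $n\ge 2$, which is exactly what is needed for parts (iv) and (v): the elementary reductions above yield only $\operatorname{ord}_2(c_n)\ge 3$ there and stop being useful once one works modulo $2^k$ with $k\ge 4$ (the reduction of $\prod_k(1-q^k)^{-24}$ ceases to be a power series in $q^2$ because of correction terms), so one genuinely needs the deeper divisibility of Lehner, of which only the weak consequence $c_n\equiv 0\bmod 2^5$ for even $n\ge 2$ is used. Granting the input, the combinatorial case analysis is routine bookkeeping.
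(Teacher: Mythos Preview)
Your proposal is correct and follows essentially the same strategy as the paper: plug the closed formula of Theorem~\ref{thm.form} into a term-by-term $2$-adic estimate, using $\operatorname{ord}_2(c_0)=3$, Lehner's $2^{11}$-divisibility of $c_n$ for even $n\ge 2$, and the weaker congruences for odd $n$, then do a case analysis on $m\bmod 8$. The paper carries this out explicitly only for $m=\ell$ (citing Kolberg for the odd-index congruences you derive by hand from $j=E_4^3/\Delta$), and leaves $m<\ell$ to the reader; your weight function $w(r)$ packages both cases uniformly, but the argument is the same.
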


\begin{proof}
	First, we consider the case $m=\ell$ and prove the congruences for $a_{\ell,0}$ using Equation \eqref{eq.a_l_0}. Recall that $c_0=2^3\cdot 3\cdot 31$. So $(\ell+1)c_0 \equiv 0\operatorname{mod} 2^4$. Assume that $t_1 r_1+\cdots +t_\lambda r_\lambda=\ell$ for some $0<r_1<\cdots <r_\lambda$ and $t_1,\dots,t_\lambda\ge 1$.
	
	If $r_1\ge 2$, then $r_i\ge 3$ is odd for some $1\le i\le\lambda$. Hence $c_{r_i-1}\equiv 0\operatorname{mod} 2^{11}$ by Theorem 1 in \cite{Lehner}. If $r_1=1$ and $t_1\ge 2$, then $c_{r_1-1}^{t_1}=c_0^{t_1}\equiv 0\operatorname{mod} 2^{6}$. If $r_1=t_1=1$ and $r_i$ is odd for some $1\le i\le\lambda$. Then $c_{r_i-1}\equiv 0\operatorname{mod} 2^{11}$ by Theorem 1 in \cite{Lehner}. Assume that $r_1=t_1=1$ and $r_i$ is even for each $2\le i\le \lambda$.

	Assume that $\ell\equiv 1\operatorname{mod} 8$. then $c_{r_1-1}=c_0\equiv 0\operatorname{mod} 2^3$. So each summand on the right-hand side of \eqref{eq.a_l_0} is divisible by $2^3$ and hence, $2^3$ divides $a_{\ell,0}$.

	Assume that $\ell\equiv 5\operatorname{mod} 8$. 
	If $r_i\not\equiv 0\operatorname{mod} 8$ for some $2\le i\le\lambda$, then $c_{r_i-1}\equiv 0\operatorname{mod} 2$ by relations (2),(3) and (4) in \cite{Kolberg_2}. So $2^4$ divides $c_{r_1-1}^{t_1}\cdots c_{r_\lambda-1}^{t_\lambda}$. If $r_i\equiv 0\operatorname{mod} 8$ for each $2\le i\le\lambda$, then $\ell\equiv 1\operatorname{mod} 8$. This is a contradiction. So each summand on the right-hand side of \eqref{eq.a_l_0} is divisible by $2^4$ and hence, $2^4$ divides $a_{\ell,0}$.

	Assume that $\ell\equiv 3\operatorname{mod} 4$. If $r_i\equiv 2\operatorname{mod} 4$ for some $2\le i\le\lambda$, then $c_{r_i-1}\equiv 0\operatorname{mod} 2^2$ by the relations (2) and (4) in \cite{Kolberg_2}. So $2^5$ divides $c_{r_1-1}^{t_1}\cdots c_{r_\lambda-1}^{t_\lambda}$. If $r_i\equiv 0\operatorname{mod} 8$ or $r_i\equiv 4\operatorname{mod} 8$ for each $2\le i\le\lambda$, then $\ell\equiv 1\operatorname{mod} 8$ or $\ell\equiv 5\operatorname{mod} 8$. This is a contradiction. So each summand on the right-hand side of \eqref{eq.a_l_0} is divisible by $2^5$ and hence, $2^5$ divides $a_{\ell,0}$. 
	
	The cases where $m<\ell$ are proved using the formula \eqref{eq.a_l_l_m} for $a_{\ell,\ell-m}$ in Theorem \ref{thm.form}. We leave the details to the reader.
\end{proof}

Recall that part (b) of Conjecture \ref{conj.div} implies that $a_{\ell,0}\equiv 0\operatorname{mod} 3^3$, and when $\ell\equiv 1\operatorname{mod} 3$, $a_{\ell,0}\equiv 0\operatorname{mod} 3^5$. Using Theorem \ref{thm.form}, we can show the following:

\begin{proposition}\label{prop.3Div}
	Let $\ell$ be a prime and let $0<m\le \ell$ be an integer. Let $a_{\ell,\ell-m}$ be the coefficient of the $\ell$-th classical modular polynomial $\Phi_\ell(X,Y)$. Then
	\begin{enumerate}[label=(\roman*)]
		\item If $m\equiv 1\operatorname{mod} 3$, then $a_{\ell,\ell-m}\equiv 0\operatorname{mod} 3$.
		\item If $m\equiv 2\operatorname{mod} 3$, then 
		$a_{\ell,\ell-m}\equiv 0\operatorname{mod} 3^2$.
	\end{enumerate}
\end{proposition}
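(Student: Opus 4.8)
The strategy mirrors the proof of Proposition~\ref{prop.2Div}: I apply the closed formulas \eqref{eq.a_l_l_m} and \eqref{eq.a_l_0} of Theorem~\ref{thm.form} and control the $3$-adic valuation of each summand using the divisibility of the Fourier coefficients $c_n$ by powers of $3$. The two inputs I need are the following divisibility facts: \textbf{(A)} $3\mid c_n$ for every $n\ge 0$ with $n\not\equiv 2\pmod 3$; and \textbf{(B)} $9\mid c_n$ for every $n\ge 1$ with $n\equiv 1\pmod 3$. Fact (A) is elementary. Writing $j=E_4^3/\Delta$ and using $240=3\cdot 80$ gives $E_4\equiv 1\pmod 3$, hence $E_4^3\equiv 1\pmod 3$; and $\Delta=q\prod_{n\ge 1}(1-q^n)^{24}=q\prod_{n\ge 1}\bigl((1-q^n)^3\bigr)^8\equiv q\prod_{n\ge 1}(1-q^{3n})^8\pmod 3$ by the freshman's dream. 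Hence $j\equiv q^{-1}\prod_{n\ge 1}(1-q^{3n})^{-8}\pmod 3$, a Laurent series supported in exponents $\equiv 2\pmod 3$, which is (A). For (B) one first notes $E_4=1+3g$ with $g\in\mathbb{Z}[[q]]$, so $E_4^3=1+9g+27g^2+27g^3\equiv 1\pmod 9$ and thus $j\equiv\Delta^{-1}\pmod 9$; the vanishing mod $9$ of the coefficients in exponents $\equiv 1\pmod 3$ then follows from the mod-$9$ expansion of $\Delta^{-1}$, or equivalently from the classical congruences of Lehner \cite{Lehner}. I also record $c_0=2^3\cdot 3\cdot 31$, so $\operatorname{ord}_3(c_0)=1$.

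Part (i): fix $m\equiv 1\pmod 3$ and consider any summand of \eqref{eq.a_l_l_m} (or of \eqref{eq.a_l_0} when $m=\ell$) attached to a solution $t_1 r_1+\cdots+t_\lambda r_\lambda=m$ with $0<r_1<\cdots<r_\lambda$ and $t_i\ge 1$. If all $r_i\equiv 0\pmod 3$ then $m\equiv 0\pmod 3$, a contradiction; so some $r_i\not\equiv 0\pmod 3$, whence $r_i-1\not\equiv 2\pmod 3$ and (A) gives $3\mid c_{r_i-1}$, so $3$ divides the summand. When $m=\ell$ the extra term $-(\ell+1)c_0$ is divisible by $3$ because $3\mid c_0$. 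Therefore $3\mid a_{\ell,\ell-m}$.

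Part (ii): fix $m\equiv 2\pmod 3$. Again each relevant partition has at least one index $i$ with $r_i\not\equiv 0\pmod 3$ (else $m\equiv 0\pmod 3$), and I split into three cases. (a) If there are two distinct such indices $i\ne i'$, then $3\mid c_{r_i-1}$ and $3\mid c_{r_{i'}-1}$ by (A), so $9$ divides $c_{r_i-1}^{t_i}c_{r_{i'}-1}^{t_{i'}}$ and hence the summand. (b) If there is a unique such index $i_0$ with $t_{i_0}\ge 2$, then $3^{t_{i_0}}\mid c_{r_{i_0}-1}^{t_{i_0}}$, so $9$ divides the summand. (c) If there is a unique such index $i_0$ with $t_{i_0}=1$, then every other part is $\equiv 0\pmod 3$, so $m\equiv r_{i_0}\pmod 3$, forcing $r_{i_0}\equiv 2\pmod 3$, i.e. $r_{i_0}-1\equiv 1\pmod 3$; now (B) gives $9\mid c_{r_{i_0}-1}$, so again $9$ divides the summand. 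For $m=\ell$ the extra term $-(\ell+1)c_0$ is divisible by $9$, since $3\mid c_0$ and $\ell\equiv 2\pmod 3$ forces $3\mid\ell+1$. Summing, $9\mid a_{\ell,\ell-m}$. (Theorem~\ref{thm.form} requires $\ell\ge 3$; the cases $\ell=2$, $m\in\{1,2\}$ are checked directly against the known polynomial $\Phi_2(X,Y)$, where $a_{2,1}=1488$ and $a_{2,0}=-162000$.)

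The step I expect to be the main obstacle is justifying fact (B): the freshman's-dream computation behind (A) only yields divisibility by $3$, and obtaining the extra factor requires either pushing the $3$-adic expansion of $\Delta^{-1}$ one order further or invoking a sharper congruence from the literature. Everything else is the routine partition bookkeeping carried out above, in parallel with Proposition~\ref{prop.2Div}.
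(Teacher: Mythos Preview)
Your argument is correct and follows the same strategy as the paper's proof: apply Theorem~\ref{thm.form} and check, partition by partition, that each summand carries enough factors of $3$ coming from the $c_{r_i-1}$'s. Your case split in Part~(ii) (two indices not divisible by $3$; one such index with multiplicity $\ge 2$; one such index with multiplicity $1$) is equivalent to the paper's bookkeeping. One bibliographic point: for your fact~(B), the paper invokes Kolberg (1.13 in \cite{Kolberg}), which actually gives the stronger $3^3\mid c_n$ for $n\equiv 1\pmod 3$, $n\ge 1$; Lehner's Theorem~1 as cited in the paper handles the case $n\equiv 0\pmod 3$ (yielding $3^5$), so your pointer to Lehner for~(B) should be redirected to Kolberg. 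Otherwise the proof is complete.
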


\begin{proof}

	We first prove the congruence for $a_{\ell,0}$ using Equation \eqref{eq.a_l_0}. Recall that $c_0=2^3\cdot 3\cdot 31$. Assume that $t_1 r_1+\cdots +t_\lambda r_\lambda=\ell$ for some $0<r_1<\cdots <r_\lambda$ and $t_1,\dots,t_\lambda\ge 1$.

	Then $r_i\not\equiv 0\operatorname{mod} 3$ for some $1\le i\le \lambda$. If $r_i=1$, then $c_{r_i-1}=c_0\equiv 0\operatorname{mod} 3$. Assume that $r_i> 1$. If $r_i\equiv 1\operatorname{mod} 3$, then $c_{r_i-1}\equiv 0\operatorname{mod} 3^{5}$ by Theorem 1 in \cite{Lehner}. If $r_i\equiv 2\operatorname{mod} 3$, then $c_{r_i-1}\equiv 0\operatorname{mod} 3^{3}$ by 1.13 in \cite{Kolberg}. Assume that $r_1=1$ and $t_1\ge 2$. Then $c_0^{t_1}\equiv 0\operatorname{mod} 3^2$.

	Assume that $\ell\equiv 1\operatorname{mod} 3$. Then $(\ell+1)c_0 \equiv 0\operatorname{mod} 3$. So each summand on the right-hand side of \eqref{eq.a_l_0} is divisible by $3$ and hence, $3$ divides $a_{\ell,0}$.

	Assume that $\ell\equiv 2\operatorname{mod} 3$. Then $(\ell+1)c_0 \equiv 0\operatorname{mod} 3^2$. If $r_1=t_1=1$, then $r_2 t_2+\cdots+r_\lambda t_\lambda=\ell-1$. If $r_i\equiv 0\operatorname{mod} 3$ for each $2\le i\le \lambda$, then $\ell\equiv 1\operatorname{mod} 3$. This is a contradiction. So each summand on the right-hand side of \eqref{eq.a_l_0} is divisible by $3^2$ and hence, $3^2$ divides $a_{\ell,0}$. 
	
	The cases when $m<\ell$ are proved using the formula \eqref{eq.a_l_l_m} for $a_{\ell,\ell-m}$. We leave the details to the reader.
\end{proof}

Recall that Conjecture \ref{conj.div} part (c) predicts certain congruences modulo $5$ satisfied by $a_{m,n}$ with $\ell+1>m+n$. Our computations also suggest that the following $5$-divisibilities hold in some cases where $\ell+1\le m+n$.

\begin{conjecture}\label{conj.a_l_m_5Div}
	Let $\ell$ be a prime and let $0<m< \ell$ be an integer. Let $a_{\ell,\ell-m}$ be the coefficient of the $\ell$-th classical modular polynomial $\Phi_\ell(X,Y)$.
	Assume that one of the following three conditions is true:
	\begin{enumerate}[label=(\roman*)]
		\item $\ell\equiv 1\text{ or }3\operatorname{mod} 5$ and $m\equiv 4\operatorname{mod} 5$;
		\item $\ell\equiv 2\operatorname{mod} 5$ and $m\equiv 3\operatorname{mod} 5$;
		\item $\ell\equiv 4\operatorname{mod} 5$ and $m\equiv 2\operatorname{mod} 5$.
	\end{enumerate}
	Then $5$ divides $a_{\ell,\ell-m}$.
	
\end{conjecture}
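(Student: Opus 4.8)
The plan is to argue as in the proofs of Propositions~\ref{prop.2Div} and~\ref{prop.3Div}: one starts from the closed formula~\eqref{eq.a_l_l_m} for $a_{\ell,\ell-m}$ (valid since $0<m<\ell$), and since $\ell\equiv 1,2,3$ or $4\pmod 5$ the factor $\ell$ is a unit modulo $5$, so it suffices to show that
\[
\sum_{\substack{t_1r_1+\cdots+t_\lambda r_\lambda=m\\ 0<r_1<\cdots<r_\lambda,\ 1\le t_i}}(-1)^u\,\frac{u!}{t_1!\cdots t_\lambda!}\binom{\ell-m+u}{u}\,c_{r_1-1}^{t_1}\cdots c_{r_\lambda-1}^{t_\lambda}\equiv 0\pmod 5,\qquad u:=\Big(\textstyle\sum_i t_i\Big)-1.
\]
The first ingredient is the list of mod $5$ congruences for the $c_n$: by Lehner~\cite{Lehner}, $c_n\equiv 0\pmod 5$ whenever $n\equiv 2$ or $3\pmod 5$ and whenever $5\mid n$ with $n>0$; equivalently, modulo $5$ the only surviving Fourier coefficients of $j$ are $c_0\equiv -1$ and the $c_n$ with $n\equiv\pm1\pmod 5$. (This is most transparent from the congruence $j(z)\equiv\eta(z)/\eta(25z)\pmod 5$, which follows from $E_4\equiv 1\pmod 5$ together with $\eta(z)^{25}\equiv\eta(25z)\pmod 5$; writing $\eta(z)/\eta(25z)=q^{-1}\prod_n(1-q^n)/\prod_n(1-q^{25n})$ and applying Euler's pentagonal number theorem shows the $q$-support lies in exponents $\equiv 0,\pm1\pmod 5$, and Lehner's divisibility $5^{a+1}\mid c(5^am)$ removes the $\equiv 0$ part except $c_0$.) Consequently a summand above is automatically divisible by $5$ unless every part $r_i$ satisfies $r_i=1$ or $r_i\equiv 0$ or $2\pmod 5$; I call such a partition \emph{admissible}, and note that for an admissible partition $m\equiv a+2b\pmod 5$, where $a\in\{0,1,2,\dots\}$ is the multiplicity with which $1$ occurs as a part and $b$ is the total multiplicity of the parts $\equiv 2\pmod 5$. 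The hypothesis on $m\bmod 5$ enters here, constraining which admissible partitions can occur.

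Unlike the situation modulo $2$ and $3$ in Propositions~\ref{prop.2Div} and~\ref{prop.3Div}, here admissible partitions genuinely do occur, and the divisibility by $5$ comes not from the vanishing of a single summand but from a cancellation among them. The smallest instance is $a_{\ell,\ell-2}=\ell c_1-\binom{\ell}{2}c_0^2$ (Example~\ref{eg.firstfew}): modulo $5$ both terms are nonzero, and since $c_0\equiv c_1\equiv -1\pmod 5$ one gets $a_{\ell,\ell-2}\equiv -\ell-\binom{\ell}{2}=-\binom{\ell+1}{2}\pmod 5$, which vanishes precisely when $5\mid\binom{\ell+1}{2}$, i.e.\ when $\ell\equiv 4\pmod 5$ (using that $\ell$ is prime and $\ell\ne5$) --- which is exactly case (iii) with $m=2$. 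The plan is therefore to organise the admissible terms so that the cancellation becomes visible, and I would do this through the Vieta relation
\[
\sum_{m=0}^{\ell}a_{\ell,\ell-m}\,j(z)^{\ell-m}=-\,j(\ell z)-\ell\sum_{k\ge 0}c_{\ell k}\,q^k,
\]
obtained by reading off the coefficient of $X^{\ell}$ in $\Phi_\ell(X,j(z))=\bigl(X-j(\ell z)\bigr)\prod_{b=0}^{\ell-1}\bigl(X-j((z+b)/\ell)\bigr)$ and using $\sum_{b=0}^{\ell-1}j((z+b)/\ell)=\ell\sum_{k\ge 0}c_{\ell k}q^k$. Reducing this identity modulo $5$, using $j(z)^5\equiv j(5z)\pmod 5$ (so that the base-$5$ digits of an exponent split a power of $j$ into a product of the $j(5^iz)$), and extracting the coefficient of $q^{m-\ell}$, should turn the claim into a finite identity among the $\binom{\ell-m+u}{u}$ and ordinary binomial coefficients, which one then evaluates modulo $5$ by Lucas' theorem together with the hypotheses on $\ell$ and $m$; Ramanujan's congruence $p(5n+4)\equiv 0\pmod 5$ should also enter, through the factor $\prod_n(1-q^{25n})^{-1}$ in the expansion of the reduction of $j$.

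The main obstacle is exactly this cancellation: one must show that for every admissible partition the surviving contributions sum to $0$ in $\mathbb F_5$, uniformly across the four pairs $(\ell,m)\bmod 5$ listed in (i)--(iii), and pinning down why precisely these pairs (and no others) work is the heart of the matter. I expect the cleanest execution to be an induction on $m$ using the recursion~\eqref{eq.ind} reduced modulo $5$: with $h(q):=q\,j(z)$, so that $h(q)\equiv\prod_{25\nmid n}(1-q^n)\pmod 5$ has $q$-support concentrated on exponents $\equiv 0,1,2\pmod 5$, the quantity $[q^{m-m'}]h(q)^{\ell-m'}$ occurring in~\eqref{eq.ind} is governed by the base-$5$ expansion of $\ell-m'$, and the induction hypothesis together with the congruence conditions should force the resulting sum to vanish. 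Obtaining a usable closed form for the admissible-partition sum modulo $5$ --- equivalently, carrying out the coefficient extraction from the reduced Vieta relation --- is the substantial part of the argument.
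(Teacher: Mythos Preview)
The statement you are attempting to prove is labelled in the paper as Conjecture~\ref{conj.a_l_m_5Div}, and the paper does \emph{not} prove it. The only result the paper establishes towards it is Proposition~\ref{prop.mle7}, which verifies the cases $m\le 7$ by substituting the explicit expressions for $a_{\ell,\ell-m}$ listed in Example~\ref{eg.firstfew} and reducing modulo~$5$. There is thus no ``paper's own proof'' of the full statement to compare against; what you have written goes well beyond what the paper claims.

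Your set-up is sound. The identification of the $q$-support of $j$ modulo~$5$ via $j\equiv\eta(z)/\eta(25z)\pmod 5$ and Euler's pentagonal number theorem is correct, and combined with Lehner's $5\mid c_{5k}$ for $k\ge 1$ it does restrict the ``admissible'' partitions in~\eqref{eq.a_l_l_m} to those whose parts $r_i$ lie in $\{1\}\cup\{r\equiv 0,2\pmod 5\}$. You are also right that this is structurally different from Propositions~\ref{prop.2Div} and~\ref{prop.3Div}: there the congruences follow because each surviving summand is individually divisible, whereas here (already for $m=2$) nonzero summands must cancel. Your Vieta relation for the coefficient of $X^\ell$ in $\Phi_\ell(X,j(z))$ is correct as stated.

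The proposal, however, is not a proof but a programme. You explicitly flag that ``obtaining a usable closed form for the admissible-partition sum modulo~$5$\ldots is the substantial part of the argument'' and do not carry it out. Neither the induction on $m$ via~\eqref{eq.ind} nor the coefficient extraction from the reduced Vieta identity is executed; in particular, you do not explain why the specific pairs $(\ell\bmod 5,\,m\bmod 5)$ in (i)--(iii) are exactly those for which the cancellation occurs, beyond the single worked case $m=2$. The invocation of $j(z)^5\equiv j(5z)\pmod 5$ and Lucas' theorem is plausible heuristics, but no concrete identity is derived from them, and the suggested role of Ramanujan's congruence $p(5n+4)\equiv 0\pmod 5$ is left unspecified. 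As it stands, the argument does not go further than the paper: the paper checks $m\le 7$ explicitly, and your text reproduces the $m=2$ check while leaving the general case open. If you can complete the coefficient extraction and make the cancellation explicit, that would resolve the conjecture and supersede what the paper achieves; but the present write-up does not do so.
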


Recall that $c_0=744$, $c_1=196884$, $c_2=21493760$, $c_3=864299970$, $c_4=20245856256$, $c_5=333202640600$, and $c_6=4252023300096$ (see e.g. \cite{vanWij} p. 398).

\begin{proposition}\label{prop.mle7}
	Conjecture \ref{conj.a_l_m_5Div} is true for $m\le 7$.
\end{proposition}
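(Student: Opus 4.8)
The plan is to reduce the statement to a short, elementary computation using the explicit formulas for $a_{\ell,\ell-m}$ with $2\le m\le 7$ recorded in Example~\ref{eg.firstfew} together with the residues of $c_0,\dots,c_6$ modulo $5$. First I would observe that among $1\le m\le 7$ only $m\in\{2,3,4,7\}$ can satisfy the hypotheses of Conjecture~\ref{conj.a_l_m_5Div}: an $m$ with $m\not\equiv 2,3,4\pmod5$ meets none of (i)--(iii), so $m=1,5,6$ need no proof, whereas $m=2$ and $m=7$ fall under (iii) (forcing $\ell\equiv4\pmod5$), $m=3$ under (ii) ($\ell\equiv2\pmod5$), and $m=4$ under (i) ($\ell\equiv1$ or $3\pmod5$). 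So the proposition splits into four cases, each with a prescribed residue of $\ell$ modulo $5$.

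Next I would record, by reducing modulo $5$ the numerical values recalled just before the proposition, that $c_0\equiv c_1\equiv-1$, $c_2\equiv c_3\equiv c_5\equiv0$, and $c_4\equiv c_6\equiv1\pmod5$. Substituting these into the formulas of Example~\ref{eg.firstfew} annihilates most of the monomials $c_{r_1-1}^{t_1}\cdots c_{r_\lambda-1}^{t_\lambda}$, and several of the remaining summands carry a factor $\ell-j$ or a binomial coefficient $\binom{\ell}{k}$, $\binom{\ell-j}{k}$ that vanishes modulo $5$ for the prescribed residue of $\ell$; such a binomial coefficient is read off either by a direct factorial computation (when $k\le 4$) or, uniformly, from the last two base-$5$ digits of $\ell$ by Lucas' theorem. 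Carrying this out term by term settles $m=2,3,4$: for $m=3$, $\ell\equiv2\pmod5$, all three summands of $a_{\ell,\ell-3}$ vanish ($c_2\equiv0$, $\ell-2\equiv0$, $\binom{\ell}{3}\equiv0$); for $m=4$, $\ell\equiv3\pmod5$, likewise; for $m=4$, $\ell\equiv1\pmod5$, only two summands survive and they contribute $-\ell(\ell-3)\tfrac{c_1^2}{2}\equiv1$ and $\ell\binom{\ell-2}{2}c_0^2c_1\equiv4$, adding to $0$; and for $m=2$, $\ell\equiv4\pmod5$, one gets $\ell c_1\equiv1$ and $-\binom{\ell}{2}c_0^2\equiv-1$.

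The case $m=7$ with $\ell\equiv4\pmod5$ needs one extra observation and is the main obstacle. In the seven-term formula for $a_{\ell,\ell-7}$ the summands involving $\binom{\ell-4}{3}$ and $\binom{\ell-3}{4}$ are $\equiv0\pmod5$ (since $\ell-4\equiv0$ and $\ell-3\equiv1$), and after applying $c_2\equiv c_3\equiv c_5\equiv0$ the first three summands collapse to $\ell\bigl(c_6+(\ell-6)+\binom{\ell-5}{2}\bigr)\equiv\ell(1+3+1)\equiv0\pmod5$. What remains are the two summands $-\ell\binom{\ell-2}{5}c_0^5c_1$ and $\binom{\ell}{7}c_0^7$, and neither binomial coefficient is divisible by $5$ in general. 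Here the identity $\binom{\ell}{7}=\binom{\ell-2}{5}\cdot\frac{\ell(\ell-1)}{42}$ is decisive: since $\ell(\ell-1)\equiv42\equiv2\pmod5$ it yields $\binom{\ell}{7}\equiv\binom{\ell-2}{5}\pmod5$ (equivalently, by Lucas' theorem both are congruent to the penultimate base-$5$ digit of $\ell$). Using $c_0\equiv c_1\equiv-1$, the two leftover summands are $\equiv-4\binom{\ell-2}{5}$ and $\equiv-\binom{\ell-2}{5}$ modulo $5$, so they add to $-5\binom{\ell-2}{5}\equiv0\pmod5$ and hence $5\mid a_{\ell,\ell-7}$. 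I expect this pairing of the two ``bad'' summands --- recognizing that they cannot be killed individually but cancel one another --- to be the only genuine difficulty; the rest is bookkeeping with Example~\ref{eg.firstfew}, the residues of the $c_i$, and elementary facts about binomial coefficients modulo $5$.
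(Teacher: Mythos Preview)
Your proposal is correct and takes essentially the same approach as the paper: reduce the explicit formulas of Example~\ref{eg.firstfew} modulo $5$ using the residues of $c_0,\dots,c_6$, and check the four relevant cases $m\in\{2,3,4,7\}$ with the prescribed residue class of $\ell$. The paper's proof is terser---it simply records the surviving terms in each case and asserts the final congruence---whereas you spell out the intermediate steps, including the pairing argument for the two surviving summands in the $m=7$ case; but the method is identical.
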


\begin{proof}

	We reduce the expressions in \ref{eg.firstfew} modulo $5$ and find: If $\ell\equiv 4\operatorname{mod} 5$, then $$a_{\ell,\ell-2}\equiv\ell c_1-\frac{\ell(\ell-1)}{2} c_0^2\equiv 0\operatorname{mod} 5,$$ and 
	
	\begin{align*}
		a_{\ell,\ell-7}&\equiv\ell c_6-\ell(\ell-6)c_1c_4+\ell\binom{\ell-5}{2}c_0^2c_4
		-\ell\binom{\ell-2}{5}c_0^5c_1+\binom{\ell}{7}c_0^7\equiv 0\operatorname{mod} 5.
	\end{align*}
	
	If $\ell\equiv 2\operatorname{mod} 5$, then  $$a_{\ell,\ell-3}\equiv-\ell(\ell-2)c_0 c_1+\binom{\ell}{3}c_0^3\equiv 0\operatorname{mod} 5.$$

	If $\ell\equiv 1\text{ or }3\operatorname{mod} 5$, then 	$$a_{\ell,\ell-4}\equiv-\frac{\ell(\ell-3)}{2}c_1^2+\ell\binom{\ell-2}{2}c_1c_0^2-\binom{\ell}{4}c_0^4\equiv 0\operatorname{mod} 5.$$
\end{proof}

Let $\ell$ be an odd prime such that $\legendre{-3}{\ell}=1$. The following theorem of Ito shows that Conjecture \ref{conj.div} is true for both $a_{0,0}$ and $a_{1,0}$ of $\Phi_{\ell}(X,Y)$ for such $\ell$.
\begin{theorem}\label{thm.ito}[Ito \cite{Ito}, see Theorem 1]
	Let $\ell\ge 5$ be a prime and let $a_{0,0}$ and $a_{1,0}$ be the coefficients of the $\ell$-th classical modular polynomial $\Phi_\ell(X,Y)$. Then $\ell\equiv 1\operatorname{mod} 6$ if and only if $a_{0,0}=a_{1,0}=0$.
\end{theorem}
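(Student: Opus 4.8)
The plan is to deduce the theorem from a count of cyclic $\ell$-isogenies out of the elliptic curve $E_0$ with $j(E_0)=0$, which has complex multiplication by the maximal order $\mathcal{O}:=\mathbb{Z}[\zeta_3]$ of $K:=\mathbb{Q}(\sqrt{-3})$. First I would specialize $Y=0$. Since
\[
\Phi_\ell(X,0)=X^{\ell+1}+a_{\ell,0}X^\ell+\cdots+a_{1,0}X+a_{0,0},
\]
the condition $a_{0,0}=a_{1,0}=0$ is equivalent to $X^2\mid\Phi_\ell(X,0)$, that is, to $X=0$ being a root of $\Phi_\ell(X,0)$ of multiplicity at least $2$. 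By the defining property of the modular polynomial, $\Phi_\ell(X,0)=\prod_{C}\bigl(X-j(E_0/C)\bigr)$, the product running over the $\ell+1$ subgroups $C\le E_0$ of order $\ell$; this identity is valid for generic values of the second variable, and persists at $0$ since both sides are polynomials in that variable. Hence $a_{0,0}=a_{1,0}=0$ holds precisely when at least two of these subgroups satisfy $E_0/C\cong E_0$.

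Next I would identify the $C$ with $E_0/C\cong E_0$ by complex multiplication. If $E_0/C\cong E_0$, then the quotient isogeny followed by an isomorphism is an endomorphism $\alpha$ of $E_0$ with $\ker\alpha=C$ and $\deg\alpha=\ell$; since $\operatorname{End}(E_0)=\mathcal{O}$ is commutative, $C=\ker\alpha=E_0[\alpha\mathcal{O}]$ depends only on the ideal $\alpha\mathcal{O}$, which has norm $\ell$. Conversely, for every ideal $\mathfrak{a}$ of $\mathcal{O}$ of norm $\ell$ (necessarily prime, and principal since $\mathcal{O}$ has class number $1$) the subgroup $E_0[\mathfrak{a}]$ has order $\ell$ and $E_0/E_0[\mathfrak{a}]\cong\mathbb{C}/\mathfrak{a}^{-1}\cong\mathbb{C}/\mathcal{O}=E_0$; moreover distinct ideals give distinct subgroups, because $\mathfrak{a}\ne\overline{\mathfrak{a}}$ forces $\mathfrak{a}+\overline{\mathfrak{a}}=\mathcal{O}$ and hence $E_0[\mathfrak{a}]\cap E_0[\overline{\mathfrak{a}}]=0$. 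Thus the number of $C$ with $E_0/C\cong E_0$ equals the number of prime ideals of $\mathcal{O}$ of norm $\ell$, which for $\ell\neq 3$ is $1+\legendre{-3}{\ell}$: this is $2$ when $\ell\equiv 1\operatorname{mod} 3$ and $0$ when $\ell\equiv 2\operatorname{mod} 3$.

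Combining the two steps, for a prime $\ell\ge 5$ we obtain $a_{0,0}=a_{1,0}=0$ if and only if $\ell$ splits in $K$, that is $\ell\equiv 1\operatorname{mod} 3$; and since $\ell$ is odd this is equivalent to $\ell\equiv 1\operatorname{mod} 6$. (When $\ell\equiv 2\operatorname{mod} 3$ one already has $a_{0,0}=\Phi_\ell(0,0)\ne 0$, which settles the remaining direction without any need to discuss $a_{1,0}$.) I expect the main point requiring care to be the second step: one must check that the \emph{only} subgroups $C$ with $j(E_0/C)=0$ are the $\mathcal{O}$-stable ones --- a non-$\mathcal{O}$-stable $C$ yields a quotient whose endomorphism ring is a proper suborder of $\mathcal{O}$, hence $j(E_0/C)\ne 0$ because $j=0$ forces the endomorphism ring to equal $\mathcal{O}$ --- and that distinct $\mathcal{O}$-stable subgroups contribute genuinely distinct factors to $\Phi_\ell(X,0)$, even though the quotient curves are abstractly isomorphic to $E_0$. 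An alternative route to the same count is Kronecker's class-number relation for $\Phi_\ell(X,X)$, combined with $\Phi_\ell(X,X)=2X^{\ell+1}+\sum_{m,n}a_{m,n}X^{m+n}$, whose coefficients of $X^0$ and $X^1$ are $a_{0,0}$ and $2a_{1,0}$ respectively.
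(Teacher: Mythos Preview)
The paper does not prove this statement; it is quoted from Ito \cite{Ito} and used as a black box (for instance in the proof of Proposition~\ref{prop.a_0_0_2}). There is therefore nothing in the paper to compare your argument against.

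That said, your argument is correct and is the natural CM-theoretic proof. The factorization $\Phi_\ell(X,0)=\prod_{C}\bigl(X-j(E_0/C)\bigr)$ holds for every value of the second variable, not just generic ones, since $\Phi_\ell(X,j(\tau))=\prod_{\gamma}\bigl(X-j(\gamma\tau)\bigr)$ is an identity of holomorphic functions on the upper half-plane; specializing at $\tau=e^{2\pi i/3}$ gives exactly what you wrote. Your verification that a non-$\mathcal{O}$-stable subgroup $C$ yields a quotient with endomorphism ring strictly contained in $\mathcal{O}$ (hence $j(E_0/C)\ne 0$) is the one step that needs to be said explicitly, and you flagged it. The bijection between $\mathcal{O}$-stable $C$ of order $\ell$ and ideals of norm $\ell$, together with the splitting behaviour of $\ell$ in $\mathbb{Q}(\sqrt{-3})$, then finishes the proof cleanly. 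The alternative you sketch via $\Phi_\ell(X,X)$ and Kronecker's class-number relation would also work and is closer in spirit to the paper's own Proposition~\ref{prop.a_0_0}, which uses $\Phi_\ell(j,j)=0$ for CM $j$-invariants.
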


For a general prime $\ell$, we prove the following results on the divisibilities of $a_{0,0}$ of $\Phi_\ell(X,Y)$.

\begin{proposition}\label{prop.a_0_0}
	Let $K$ be an imaginary quadratic field with ring of integers $\mathcal{O}_K$ and Hilbert class field $H$. Let $j$ be the $j$-invariant of an elliptic curve $E/H$ with geometric endomorphism ring $\mathcal{O}_K$. Assume that a prime $\ell\in\mathbb{N}$ splits completely in $H$. Let $a_{0,0}$ be the coefficient of the $\ell$-th classical modular polynomial $\Phi_\ell(X,Y)$. Then $|\operatorname{Norm}_\mathbb{Q}^{\mathbb{Q}(j)} j|$ divides $a_{0,0}$.
\end{proposition}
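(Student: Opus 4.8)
The plan is to exhibit $j$ as a root of the one‑variable integral polynomial $\Phi_\ell(X,X)$ and then divide out the Hilbert class polynomial.

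First I would translate the hypothesis that $\ell$ splits completely in $H$. Since $K\subseteq H$, such an $\ell$ is unramified in $K$ and cannot be inert in $K$ (an inert prime has residue degree $2$ in $K$, hence in $H$), so $\ell\mathcal{O}_K=\mathfrak{l}\bar{\mathfrak{l}}$ with $\mathfrak{l}\neq\bar{\mathfrak{l}}$; moreover, by the defining property of the Hilbert class field, $\mathfrak{l}$ (and $\bar{\mathfrak{l}}$) split completely in $H/K$, which forces $\mathfrak{l}$ to be principal, say $\mathfrak{l}=(\pi)$ with $\pi\in\mathcal{O}_K$ and $\operatorname{Norm}_\mathbb{Q}^{K}\pi=\ell$. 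Because $E$ has geometric endomorphism ring $\mathcal{O}_K$, multiplication by $\pi$ is an endomorphism $[\pi]\in\operatorname{End}(E_{\overline{\mathbb{Q}}})$ of degree $\operatorname{Norm}_\mathbb{Q}^K\pi=\ell$, and $\ker[\pi]=E[\mathfrak{l}]$ has prime order $\ell$, hence is cyclic. Thus $[\pi]\colon E\to E$ is a cyclic $\ell$-isogeny defined over $\overline{\mathbb{Q}}$ (with $E/\ker[\pi]\cong E$), so by the characterization of $\Phi_\ell$ recalled from \cite{SilvermanAd}, Theorem II.6.3, we get $\Phi_\ell(j,j)=0$; that is, $j$ is a root of $\Phi_\ell(X,X)\in\mathbb{Z}[X]$.

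Next I would invoke CM theory: the minimal polynomial of $j$ over $\mathbb{Q}$ is the Hilbert class polynomial $H_K(X)=\prod_{[\mathfrak{a}]\in\operatorname{Cl}(K)}(X-j(\mathfrak{a}))$, which is monic of degree $h(K)=[\mathbb{Q}(j):\mathbb{Q}]$ with coefficients in $\mathbb{Z}$ (classically, $j$ is an algebraic integer and $\operatorname{Cl}(K)$ permutes the CM $j$-invariants transitively). Since $H_K(j)=0$ and $\Phi_\ell(j,j)=0$, $H_K(X)$ divides $\Phi_\ell(X,X)$ in $\mathbb{Q}[X]$, and as $H_K$ is monic with integer coefficients, Gauss's lemma gives $\Phi_\ell(X,X)=H_K(X)g(X)$ for some $g\in\mathbb{Z}[X]$. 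Evaluating at $X=0$ yields
\[
a_{0,0}=\Phi_\ell(0,0)=H_K(0)\,g(0),
\]
where $g(0)\in\mathbb{Z}$ and $|H_K(0)|=\bigl|\prod_{[\mathfrak{a}]}j(\mathfrak{a})\bigr|=\bigl|\operatorname{Norm}_\mathbb{Q}^{\mathbb{Q}(j)}j\bigr|$. Hence $|\operatorname{Norm}_\mathbb{Q}^{\mathbb{Q}(j)}j|$ divides $a_{0,0}$. (In the degenerate case $K=\mathbb{Q}(\sqrt{-3})$ one has $h=1$, $j=0$, and the statement reads $0\mid a_{0,0}$, i.e. $a_{0,0}=0$, which is consistent with Theorem \ref{thm.ito}.)

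There is no real computational obstacle; the work is entirely in correctly assembling standard inputs. The point most worth stating carefully is the first step — that ``$\ell$ splits completely in $H$'' is equivalent to ``$\ell=\mathfrak{l}\bar{\mathfrak{l}}$ in $K$ with $\mathfrak{l}$ principal'', which is precisely what produces the degree‑$\ell$ endomorphism $[\pi]$ and hence the self‑$\ell$-isogeny. The remaining ingredients (that $\Phi_\ell$ detects cyclic $\ell$-isogenies between elliptic curves over $\overline{\mathbb{Q}}$, and the integrality and irreducibility of the Hilbert class polynomial) are classical; one should only note that $[\pi]$ need not be defined over $H$, which is harmless since $\Phi_\ell$ records $\overline{\mathbb{Q}}$-isogenies.
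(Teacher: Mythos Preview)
Your proof is correct and follows essentially the same route as the paper: translate ``$\ell$ splits completely in $H$'' into the existence of a principal prime $(\pi)\mid\ell$ in $\mathcal{O}_K$, use the degree-$\ell$ endomorphism $[\pi]$ to get $\Phi_\ell(j,j)=0$, and then divide $\Phi_\ell(X,X)$ by the (monic, integral) minimal polynomial of $j$ via Gauss's lemma to conclude. The only cosmetic differences are that the paper phrases the last step via primitivity of $\Phi_\ell(X,X)$ (leading term $-X^{2\ell}$) rather than monicity of $H_K$, and does not dwell on cyclicity of $\ker[\pi]$, which is automatic for prime $\ell$.
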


\begin{proof}
	Because $\ell$ splits completely in $H$, we have that $\ell$ splits completely as $\mathfrak{p}_1\mathfrak{p}_2$ in $K$ where $\mathfrak{p}_1$ and $\mathfrak{p}_1$ are principal ideals. Assume that $\mathfrak{p}_1=(\alpha)$ for some $\alpha\in\mathcal{O}_K$. Then $|\operatorname{Norm}_\mathbb{Q}^K \alpha|=\ell$. Let $[\alpha]: E\rightarrow E$ be the endomorphism as defined in Proposition II 1.1 of \cite{SilvermanAd}. Then $[\alpha]$ has degree $\ell$ by Corollary II 1.5 in \cite{SilvermanAd}. It follows that $\Phi_\ell(j,j)=0$ and hence $j$ is a root of the polynomial $P(X):=\Phi_\ell(X,X)$. Note that $P(X)$ has leading term $-X^{2\ell}$. In particular, $P(X)$ is primitive. 
	
	Let $f(X)\in\mathbb{Z}[X]$ be a primitive minimal polynomial of $j$ over $\mathbb{Q}$. Then $P(X)=f(X)g(X)$ for some $g(X)\in\mathbb{Q}[X]$. By Gauss's Lemma, $g(X)\in\mathbb{Z}[X]$. Therefore the constant term $\operatorname{Norm}_\mathbb{Q}^{\mathbb{Q}(j)} j$ of $f(X)$ divides $a_{0,0}$.
\end{proof}

For the norms of some CM $j$-invariants, see \cite{Gross-Zagier} Table $1$, and \cite{Berwick_CM}.

\begin{proposition}\label{prop.a_0_0_2}
	Let $\ell>5$ be a prime and let $a_{0,0}$ be the coefficient of $\Phi_\ell(X,Y)$. Then the following is true.
	\begin{enumerate}
		\item $a_{0,0}\equiv 0 \operatorname{mod} 2^{15}3^{3}5^{3}$.
		\item If Conjecture \ref{conj.div} is true for all $a_{m,n}$ with $(m,n)\ne (0,0)$ such that $\ell+1>m+n$, then Conjecture \ref{conj.div} is true for $a_{0,0}$.
	\end{enumerate}
\end{proposition}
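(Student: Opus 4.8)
The plan is to treat the two parts separately, using different tools. For part (1), I would combine Ito's theorem (Theorem \ref{thm.ito}) with the special case $m=\ell$ of Conjecture \ref{conj.div} already established modulo small powers of $2$, $3$, $5$ in Propositions \ref{prop.2Div} and \ref{prop.3Div}, together with a norm argument. More precisely: for the primes $\ell$ with $\legendre{-3}{\ell}=1$, i.e.\ $\ell\equiv 1\operatorname{mod} 6$, Theorem \ref{thm.ito} gives $a_{0,0}=0$, which is trivially divisible by anything. For the remaining primes $\ell>5$ — those with $\ell\equiv 5\operatorname{mod} 6$ — I would produce explicit imaginary quadratic fields $K$ in which $\ell$ splits completely in the Hilbert class field $H$ and apply Proposition \ref{prop.a_0_0}, so that the norm of the corresponding CM $j$-invariant divides $a_{0,0}$. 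Taking, for instance, $K=\mathbb{Q}(i)$ (so $H=K$, CM $j$-invariant $1728=2^6 3^3$), $K=\mathbb{Q}(\sqrt{-2})$ ($j=8000=2^6 5^3$), and $K=\mathbb{Q}(\sqrt{-7})$ ($j=-3375=-3^3 5^3$), and combining the divisibilities one gets, for the primes that split completely in all three fields simultaneously, a large power of $2\cdot3\cdot5$ dividing $a_{0,0}$. The congruence conditions ``$\ell$ splits in $\mathbb{Q}(i)$, $\mathbb{Q}(\sqrt{-2})$, $\mathbb{Q}(\sqrt{-7})$'' are $\ell\equiv 1\operatorname{mod} 4$, $\ell\equiv 1,3\operatorname{mod} 8$, and $\legendre{-7}{\ell}=1$ respectively; these do not cover all residue classes, so I would need a list of small imaginary quadratic fields of class number $1$ (and perhaps class number $2$, using Proposition \ref{prop.a_0_0} with $\mathbb{Q}(j)$ a quadratic field and taking the norm down to $\mathbb{Q}$) whose CM $j$-invariants are supported at $\{2,3,5\}$ or have controlled valuations there, chosen so that every residue class of $\ell\operatorname{mod}$ (some modulus) is handled by at least one field contributing the full $2^{15}3^3 5^3$. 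The data in \cite{Gross-Zagier} Table 1 and \cite{Berwick_CM} is exactly what is needed here.

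Where the CM norm argument is not by itself enough to reach the exponents $15$, $3$, $3$ for a given residue class, I would supplement it with the congruences of Propositions \ref{prop.2Div} and \ref{prop.3Div} applied with $m=\ell$: these already give, unconditionally, $2^{\ge 3}$, $2^{\ge 4}$, or $2^{\ge 5}\mid a_{\ell,0}=a_{0,0}\cdot(\text{unit? no})$ — wait, one must be careful that $a_{\ell,0}$ is literally the coefficient $a_{\ell,\ell-\ell}$, not $a_{0,0}$; these are different coefficients (related by symmetry only as $a_{\ell,0}=a_{0,\ell}$). So the relevant extra input for the constant coefficient $a_{0,0}$ is instead the part of Conjecture \ref{conj.div} being assumed in part (2), or independent CM input. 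Consequently the cleanest route for part (1) is: split into $\ell\equiv 1\operatorname{mod} 6$ (use Theorem \ref{thm.ito}) and $\ell\equiv 5\operatorname{mod} 6$, and for the latter exhibit enough CM fields via Proposition \ref{prop.a_0_0} to force $2^{15}3^3 5^3\mid a_{0,0}$ in every sub-case; this is a finite check over residue classes modulo, say, $\operatorname{lcm}$ of the relevant discriminants.

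For part (2), the argument is a clean bootstrap using the known algebraic structure of $\Phi_\ell(X,X)$. As in the proof of Proposition \ref{prop.a_0_0}, $P(X):=\Phi_\ell(X,X)$ is a primitive polynomial of degree $2\ell$ with leading coefficient $-1$ and constant term $a_{0,0}$. Evaluating the identity $\Phi_\ell(X,Y)=X^{\ell+1}+Y^{\ell+1}+\sum_{0\le m,n\le\ell}a_{m,n}X^mY^n$ at $Y=X$ gives
\[
P(X)=2X^{\ell+1}+\sum_{0\le m,n\le\ell}a_{m,n}X^{m+n},
\]
so the coefficient of $X^0$ in $P$ is $a_{0,0}$ and, more importantly, for each $k$ with $0\le k< \ell+1$ the coefficient of $X^k$ in $P$ is $\sum_{m+n=k,\ 0\le m,n\le\ell}a_{m,n}$, a sum of coefficients $a_{m,n}$ all of which have $\ell+1-m-n=\ell+1-k=:c>0$, hence are all divisible by $2^{15c}3^{3c}5^{3c}$ by the assumed Conjecture \ref{conj.div} (with the one possible exception $(m,n)=(0,0)$, which only occurs when $k=0$). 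On the other hand, $P$ has a rational root: by the CM argument of Proposition \ref{prop.a_0_0}, for a suitable auxiliary field ($K=\mathbb{Q}(\sqrt{-3})$, say, whose $j$-invariant is $0$) one gets $\Phi_\ell(0,0)=a_{0,0}=0$ precisely when $\ell\equiv 1\operatorname{mod} 6$ — but in general I instead want to use that $P(X)$ is divisible (over $\mathbb{Z}$, by Gauss) by the minimal polynomial of any CM $j$-invariant $j_0$ with an endomorphism of degree $\ell$, and read off a divisibility of $a_{0,0}$ by the constant term of $P(X)/(\text{that factor})$. The key step is then to evaluate or factor $P$ so as to isolate $a_{0,0}$ in terms of the other coefficients: writing $a_{0,0}=-P(0)+0$ is vacuous, so instead the plan is to use a single well-chosen specialization — for instance, compare $P(X)$ modulo $2^{15}3^3 5^3$, where by the above all coefficients of $X^k$ for $0<k<\ell+1$ vanish mod $2^{15}3^35^3$ and the coefficient of $X^{\ell+1}$ is $2$; this forces $P(X)\equiv 2X^{\ell+1}+(\text{terms in }X^{k},\ k\ge \ell+1)\pmod{2^{15}3^3 5^3}$, and then combining with the fact (from the CM construction, or from Theorem \ref{thm.ito} in the case $\ell\equiv1\bmod 6$) that $P$ has a root among the CM $j$-invariants whose relevant valuations are known, one solves for $\operatorname{ord}_p(a_{0,0})$.

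The main obstacle, I expect, is part (2): controlling $a_{0,0}=P(0)$ from congruence information about the other coefficients of $P$ requires a genuine input beyond Conjecture \ref{conj.div} for $(m,n)\ne(0,0)$ — namely a value of $P$ (or a factorization of $P$) at a point where the $p$-adic valuation is pinned down. The honest way to supply this is again Proposition \ref{prop.a_0_0} with a fixed small CM field whose $j$-invariant norm has $p$-adic valuation exactly what is needed, so the real content of part (2) is to show that for \emph{every} prime $\ell>5$ one such field exists with $\ell$ split in its Hilbert class field, which loops back to the same finite residue-class bookkeeping as part (1). I would therefore organize the write-up so that this residue-class analysis is done once, cleanly, and both parts quote it. A secondary technical point to watch: when using class number $2$ fields one must take $\operatorname{Norm}^{\mathbb{Q}(j)}_{\mathbb{Q}}j$ rather than a single conjugate, and verify primitivity of the minimal polynomial via Gauss's Lemma exactly as in Proposition \ref{prop.a_0_0}; and one must double-check that the valuations of the CM $j$-invariant norms listed in \cite{Gross-Zagier} and \cite{Berwick_CM} at $2,3,5$ really do sum to at least $15,3,3$ across the chosen covering family.
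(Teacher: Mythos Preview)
Your overall architecture is right --- use CM $j$-invariants via Proposition \ref{prop.a_0_0} to produce a root $j$ of $\Phi_\ell(X,X)$ with large $p$-adic valuation, then for part (2) rewrite $\Phi_\ell(j,j)=0$ as
\[
a_{0,0}=-2j^{\ell+1}-\sum_{\substack{0\le m,n\le\ell\\(m,n)\ne(0,0)}}a_{m,n}j^{m+n}
\]
and bound each term. That is exactly how the paper proceeds. The gap is in how you propose to \emph{find} the CM field.

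Your plan is to cover all primes $\ell$ by a fixed finite list of small imaginary quadratic fields of class number $1$ or $2$, checking residue classes modulo some modulus. This cannot work for the $2^{15}$ part. The class-number-$1$ fields whose $j$-invariant is divisible by $2^{15}$ are exactly those with discriminant $d\in\{-3,-11,-19,-43,-67,-163\}$ (equivalently $d\equiv 5\bmod 8$); your proposed fields $\mathbb{Q}(i),\mathbb{Q}(\sqrt{-2}),\mathbb{Q}(\sqrt{-7})$ give only $2^6$. And for any finite collection of discriminants, the Legendre-symbol conditions $\legendre{d}{\ell}=1$ are independent, so by CRT/Dirichlet there are infinitely many primes $\ell$ (with $\ell\equiv 2\bmod 3$, so Ito does not help) for which $\ell$ is inert in every field on the list. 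No finite residue-class bookkeeping closes this.

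The paper's fix is to choose the field \emph{as a function of $\ell$}. For the $2$-part one picks $x$ (depending on $\ell\bmod 8,32,$ or $128$) so that $\ell-x^2=ny^2$ with $n$ squarefree and $n\equiv 3\bmod 8$; then $\ell$ splits completely in the Hilbert class field of $K=\mathbb{Q}(\sqrt{-n})$ by Cox's theorem on $p=x^2+ny^2$, and since $d_K\equiv 5\bmod 8$, the Gross--Zagier result (Corollary 2.5(2) of \cite{Gross-Zagier}) forces $2^{15}\mid j$ for every CM $j$-invariant attached to $\mathcal{O}_K$. The analogous constructions with $n\equiv 1\bmod 3$ and $n\equiv 2,3\bmod 5$ handle the $3$- and $5$-parts. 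Once you have this $j$ with $v_p(j)\ge 15v(2)$ (resp.\ $3v(3)$, $3v(5)$), your valuation argument for part (2) goes through verbatim: for $m+n<\ell+1$ use the hypothesis on $a_{m,n}$, for $m+n\ge\ell+1$ use $v(j^{m+n})$ alone, and the term $2j^{\ell+1}$ even picks up an extra $v(2)$. So the one genuinely missing ingredient in your proposal is the Gross--Zagier input together with the $\ell=x^2+ny^2$ construction that makes it applicable to \emph{every} $\ell$.
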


\begin{proof}
	
	By Proposition \ref{prop.verif}, we can assume that $\ell>\bd$. Fix a prime $\ell>\bd$.
	
	{\it The $2$-divisibilities.}
	The prime $\ell$ satisfies exactly one of the conditions (a),(b),(c) below. (a) $\ell\equiv 3 \;(\text{resp. } 7)\operatorname{mod} 8$. Let $x:=4\; (\text{resp. } 2)$. (b) $\ell\equiv 5 \;(\text{resp. } 13, 21, 29)\operatorname{mod} 32$. Let $x:=5\; (\text{resp. } 1,3,7)$. (c) 
	$$\ell\equiv 1 \;(\text{resp. } 9,17,25,33,41,49,57,65,73,81,89,97,105,113,121)\operatorname{mod} 128.$$ Let 
	$$x:=9\; (\text{resp. } 27,15,19,25,11,1,3,23,5,17,13,7,21,31,29).$$ 
	Assume that $\ell-x^2>0$. Write $\ell-x^2$ in the form $ny^2$ where $n,y\in\mathbb{N}$ and $n$ is squarefree. Our choice of $x$ makes $n\equiv 3\operatorname{mod} 8$ in all cases. Indeed, in case (a), this is clear. In case (b), we have that $ny^2\equiv 12\operatorname{mod} 32$. The possible residues of $y^2$ modulo $32$ are the following: $0, 1, 4, 9, 16, 17, 25$. If $y^2\equiv0\operatorname{mod} 32$, then $0\equiv12\operatorname{mod} 32$, a contradiction. If $y^2\equiv16\operatorname{mod} 32$, then $4n\equiv 3\operatorname{mod}8$ and hence $0\equiv 3\operatorname{mod} 4$, a contradiction. If the residue of $y^2$ modulo $32$ is one of the following: $1, 9, 17, 25$, then $4$ divides $n$. This contradicts the assumption that $n$ is squarefree. Hence $y^2\equiv 4\operatorname{mod} 32$ and $n\equiv 3\operatorname{mod} 8$. In case (c), we have that $ny^2\equiv 48\operatorname{mod} 128$. The possible residues of $y^2$ modulo $128$ are the following: $ 0, 1, 4, 9, 16, 17, 25, 33, 36, 41, 49, 57, 64, 65, 68, 73, 81, 89, 97, 100,105, 113, 121$. By an argument similar to case (b), we can derive that $y^2\equiv 16$ and thus $n\equiv 3\operatorname{mod} 8$. So $n\equiv 3\operatorname{mod} 8$ in each case.
	
	(i) Let $K:=\mathbb{Q}(\sqrt{-n})$ with ring of integers $\mathcal{O}_K$. By Theorem 9.4 in \cite{Cox}, $\ell$ splits completely in the Hilbert class field $H$ of $K$. Let $E/H$ be an elliptic curve with geometric endomorphism ring $\mathcal{O}_K$ (see \cite{SilvermanAd} p. 99 and p. 105). Let $j:=j(E)$.  Then $|\operatorname{Norm}_\mathbb{Q}^{\mathbb{Q}(j)} j|$ divides $a_{0,0}$ by Proposition \ref{prop.a_0_0}. Moreover, let $d$ be the discriminant of $K$. Then $d=-n\equiv 5\operatorname{mod} 8$. Corollary 2.5, 2) in \cite{Gross-Zagier} shows that $j=2^{15}a$ for some $a\in\mathcal{O}_{\mathbb{Q}(j)}$. Recall that $[\mathbb{Q}(j):\mathbb{Q}]=h(K)$, where $h(K)$ is the class number of $K$. Hence $2^{15h(K)}$ divides $|\operatorname{Norm}_\mathbb{Q}^{\mathbb{Q}(j)} j|$. Therefore $2^{15h(K)}$ divides $a_{0,0}$.

	(ii) Keep the notation in the proof of part (i). Let $\mathfrak{p}$ be a prime ideal in $H$ above the ideal $(2)$ in $\mathbb{Q}$ and let $v$ be the valuation on $H$ associated to $\mathfrak{p}$. Since $\ell$ splits completely in $H$, we know that $\Phi_\ell(j,j)=0$ (see the proof of Proposition \ref{prop.a_0_0}). It follows that $$v(a_{0,0})=v(-2j^{\ell+1}-\sum\limits_{\substack{0\le m,n\le \ell\\
			(m,n)\ne(0,0)}}a_{m,n}j^{m+n}).$$ Assume that Conjecture \ref{conj.div} is true for all $a_{m,n}$ with $(m,n)\ne (0,0)$ such that $\ell+1>m+n$. Because $j=2^{15}a$ with $a\in\mathcal{O}_{\mathbb{Q}(j)}$, we have that $v(a_{m,n}j^{m+n})\ge 15(\ell+1-m-n)v(2)+15(m+n)v(2)=15(\ell+1)v(2)$ for all $m,n$ such that $0\le m,n\le \ell$ and $(m,n)\ne (0,0)$. We also have that $v(-2j^{\ell+1})\ge 15(\ell+1)v(2)+v(2)$. Hence $v(a_{0,0})\ge 15(\ell+1)v(2)$ and Conjecture \ref{conj.div} is true for $a_{0,0}$.

	Assume that $\ell-x^2\le 0$ with $\ell>\bd$. Then it is easy to check that $\ell$ is one of the following: $449, 521, 761, 881$. Let $\ell=449$ (resp. $521$, $761$, $881$). Then the proof of Corollary 3.20 in \cite{Wang_thesis} shows that $\Phi_\ell(j,j)=0$ for $j=-2^{15}$ (resp. $-2^{15}$, $-2^{15} 3^3$, $-2^{15}$) since $\legendre{-11}{449}=1$ (resp. $\legendre{-11}{521}=1$, $\legendre{-19}{761}=1$, $\legendre{-11}{881}=1$). It is now clear that the $2$-divisibilities in (i) and (ii) hold for $\ell$.

	Next, we prove the divisibilities by $3$ and $5$. We keep the notation in the proof of $2$-divisibilities in the case $\ell-x^2>0$. Since the idea is similar, we only point out of the required adjustments. 
	
	{\it The $3$-divisibilities.} Note that we can assume that $\ell\equiv 2\operatorname{mod} 3$ since $a_{0,0}=0$ otherwise by Theorem \ref{thm.ito}. Let $\ell\equiv 2\operatorname{mod} 3$ and let $x:=1$. Then $n\equiv 1\operatorname{mod} 3$ and $d\equiv 2\operatorname{mod} 3$. Corollary 2.5, 2) in \cite{Gross-Zagier} shows that $j=1728+3^6 a=3^3(2^6+a)$ for some $a\in\mathcal{O}_{\mathbb{Q}(j)}$. The $3$-divisibilities in (i) and (ii) follow.
	
	{\it The $5$-divisibilities.} If $\ell\equiv 1 \;(\operatorname{resp.} 2, 3, 4)\operatorname{mod} 5$, let $x:=2\; (\operatorname{resp.} 2, 1, 1)$. Then $n\equiv 2\text{ or } 3\operatorname{mod} 5$ and $d\equiv 2\text{ or } 3\operatorname{mod} 5$. We know that $j=5^3 a$ for some $a\in\mathcal{O}_{\mathbb{Q}(j)}$ by Corollary 2.5, 2) in \cite{Gross-Zagier}. The $5$-divisibilities in (i) and (ii) follow.
\end{proof}

Our initial interest in Conjecture \ref{conj.div} was motivated by some considerations about isogenous elliptic curves (\cite{Wang_red}, \repisogeny). In particular, let us mention the following conjecture, which is implied by Conjecture \ref{conj.div}.

\begin{conjecture}\label{conj.eqvaluation}
	Let $\mathcal{O}_K$ be a discrete valuation domain with field of fractions $K$ and residue field $k$. Let $v$ be the normalized valuation of $K$. Let $E_1/K$ and $E_2/K$ be elliptic curves and assume that there exists an isogeny of degree $d$ from $E_1/K$ to $E_2/K$ where $d$ is coprime to $\operatorname{char}(k)$. Then the following is true.
	\begin{enumerate}[label=(\alph*)]
		\item Assume that $\operatorname{char}(k)=2$. If $0< v(j(E_1))< 15v(2)$, then $v(j(E_1))=v(j(E_2))$. 
		\item Assume that $\operatorname{char}(k)=3$. If $0< v(j(E_1))< 3 v(3)$, then $v(j(E_1))=v(j(E_2))$. Moreover, if $\ell\equiv 1\operatorname{mod} 3$ and $0< v(j(E_1))< \frac{9}{2} v(3)$, then $v(j(E_1))=v(j(E_2))$. 
		\item Assume that $\operatorname{char}(k)=5$. If $0< v(j(E_1))< 3v(5)$, then $v(j(E_1))=v(j(E_2))$.

	\end{enumerate}
\end{conjecture}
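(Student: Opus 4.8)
Conjecture \ref{conj.eqvaluation} is meant to be \emph{deduced} from Conjecture \ref{conj.div}, so the plan is to establish that implication. The first step is a reduction to the case where the isogeny $\phi\colon E_1\to E_2$ has prime degree $\ell$ coprime to $\operatorname{char}(k)$. Over $\bar K$, any isogeny of degree $d$ factors as a multiplication-by-$n$ map on $E_1$ (which leaves the $j$-invariant unchanged) followed by a \emph{cyclic} isogeny of degree $d/n^2$, and a cyclic isogeny of any degree factors further as a chain of cyclic isogenies of prime degree, the primes being the prime divisors of that degree counted with multiplicity; every degree occurring remains coprime to $\operatorname{char}(k)$. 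This produces a chain $E_1=F_0\to F_1\to\cdots\to F_r=E_2$ of prime-degree isogenies. Replacing $K$ by a finite extension $L$ containing all the $j(F_i)$ and extending $v$ to the normalized valuation of $L$ (still written $v$), the integrality congruences of Conjecture \ref{conj.div}, being statements about rational integers, continue to hold. It then suffices to prove $v(j(F_i))=v(j(F_{i+1}))$ for each $i$: the chain of equalities yields $v(j(E_1))=v(j(E_2))$, and the inequality $0<v(j(F_i))<15v(2)$ (resp. its analogue for $3$ and $5$) is propagated along the chain precisely by those equalities.

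So fix a prime $\ell$ with $\ell\nmid\operatorname{char}(k)$, put $j_1:=j(F_i)$ and $j_2:=j(F_{i+1})$, recall $\Phi_\ell(j_1,j_2)=0$, and set $v_1:=v(j_1)$, $v_2:=v(j_2)$. Suppose towards a contradiction that $v_1\ne v_2$. Expanding
\[
0=\Phi_\ell(j_1,j_2)=j_1^{\ell+1}+j_2^{\ell+1}+\sum_{0\le m,n\le\ell}a_{m,n}\,j_1^m j_2^n,
\]
I will show that a single term attains the minimal valuation, contradicting the vanishing of the sum via the ultrametric inequality. Treat the case $v_1<v_2$. Here $v(j_2^{\ell+1})=(\ell+1)v_2>(\ell+1)v_1$. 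For a summand with $c:=\ell+1-m-n>0$, part (a) of Conjecture \ref{conj.div} (applicable since $\ell$ is odd) gives $v(a_{m,n})\ge 15c\,v(2)$, so using $v_2>v_1>0$ and the hypothesis $v_1<15v(2)$,
\begin{align*}
v(a_{m,n}j_1^m j_2^n)&\ \ge\ 15c\,v(2)+mv_1+nv_2\ \ge\ 15c\,v(2)+(m+n)v_1\\
&\ =\ (\ell+1)v_1+c\,(15v(2)-v_1)\ >\ (\ell+1)v_1.
\end{align*}
For a summand with $c\le 0$, i.e.\ $m+n\ge\ell+1$, one has $1\le m\le\ell$ and $n\ge\ell+1-m$, hence (as $v(a_{m,n})\ge 0$ and $v_2>v_1$)
\[
v(a_{m,n}j_1^m j_2^n)\ \ge\ mv_1+nv_2\ \ge\ mv_1+(\ell+1-m)v_2\ =\ (\ell+1)v_1+(\ell+1-m)(v_2-v_1)\ >\ (\ell+1)v_1.
\]
Thus $j_1^{\ell+1}$ is the unique term of minimal valuation $(\ell+1)v_1$, contradicting $\Phi_\ell(j_1,j_2)=0$. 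The case $v_1>v_2$ is identical with $j_2^{\ell+1}$ now the unique minimal term: the hypothesis $0<v_1<15v(2)$ still suffices, since it forces $v_2<v_1<15v(2)$, which is all the cross-term estimate needs. Hence $v_1=v_2$, proving part (a).

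Parts (b) and (c) follow by repeating the computation with the input of Conjecture \ref{conj.div} changed accordingly: the bound $15c\,v(2)$ is replaced by $3c\,v(3)$, by $\lceil\tfrac{9}{2}c\rceil v(3)\ge\tfrac{9}{2}c\,v(3)$ when the isogeny degree is $\equiv 1\bmod 3$, and by $3c\,v(5)$; correspondingly $15v(2)$ is replaced by $3v(3)$, by $\tfrac{9}{2}v(3)$, and by $3v(5)$. The hypotheses $\ell\nmid\operatorname{char}(k)$ guarantee that the relevant case of Conjecture \ref{conj.div} applies (e.g.\ $\ell\ne 2$ in characteristic $2$). I expect the main obstacle to lie not in the arithmetic Newton-polygon estimate, which is routine once Conjecture \ref{conj.div} is granted, but in the bookkeeping of the first step: one must check that the factorization into prime-degree isogenies is fully compatible with the passage to $L$, and one must in particular be careful with the ``moreover'' clause of part (b), since the prime factors of a composite degree $d\equiv 1\bmod 3$ need not individually be $\equiv 1\bmod 3$; that clause may therefore need to be restricted to prime degree, or handled directly via the composite modular polynomial $\Phi_N$.
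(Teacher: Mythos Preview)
Your proposal is correct and follows essentially the same route as the paper's Proposition~\ref{prop.imply}: reduce to prime isogeny degree, then use the ultrametric inequality on the expansion of $\Phi_\ell(j_1,j_2)=0$ together with the divisibility bounds from Conjecture~\ref{conj.div} to force a unique term of minimal valuation. The paper streamlines your symmetric case split by invoking the dual isogeny to assume $v(j_1)\le v(j_2)$ at the outset, but the estimates are the same.

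Two small points. First, the paper separates out the equicharacteristic case (where $\operatorname{char}(K)=p$ and one must argue via the reduction $\widetilde{\Phi}_\ell\in\mathbb{F}_p[X,Y]$, citing a different source for $\widetilde{\Phi}_\ell(j_1,j_2)=0$); your write-up should flag this, since $v(p)=\infty$ there and the inequality $v(j_1)<15v(2)$ is vacuous in the sense that every coefficient with $c>0$ already vanishes in $\mathbb{F}_2$. Second, your caution about the ``moreover'' clause in part~(b) is well placed: the paper only claims the implication and proves the representative case, leaving the others ``similarly''; the use of $\ell$ in the statement indeed signals that this refinement is intended for prime degree, and your observation that prime factors of a composite $d\equiv 1\pmod 3$ need not be $\equiv 1\pmod 3$ shows why.
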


\begin{proposition}\label{prop.imply}
	Conjecture \ref{conj.div} (a) (resp. (b), (c)) implies Conjecture \ref{conj.eqvaluation} (a) (resp. (b), (c)). 
\end{proposition}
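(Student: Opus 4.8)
The plan is to reduce to an isogeny of prime degree and then run a Newton‑polygon type comparison of the valuations of the monomials appearing in the modular equation $\Phi_\ell(j(E_1),j(E_2))=0$, using Conjecture~\ref{conj.div} to control $v(a_{m,n})$.

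\emph{Reduction to prime degree.} Let $\phi\colon E_1\to E_2$ be the given isogeny of degree $d$, with $d$ coprime to $\operatorname{char}(k)$, so $\ker\phi$ is an étale $K$-group scheme, i.e.\ a $\operatorname{Gal}(\overline K/K)$-stable subgroup of $E_1[d]\cong(\mathbb{Z}/d)^2$; write $\ker\phi\cong\mathbb{Z}/a\times\mathbb{Z}/b$ with $a\mid b$. Then $E_1[a]=(\ker\phi)[a]\subseteq\ker\phi$, so $\phi$ factors over $K$ as the multiplication map $[a]\colon E_1\to E_1$ (which fixes the $j$-invariant) followed by an isogeny whose kernel is cyclic of order $b/a\mid d$; a cyclic isogeny in turn factors over $K$ into isogenies of prime degree dividing $d$ (the cyclic kernel has a characteristic, hence Galois-stable, subgroup of each order), each degree coprime to $\operatorname{char}(k)$. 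Along such a chain the valuation of the intermediate $j$-invariant never changes once the prime case is known, so it suffices to prove: for every prime $\ell\neq\operatorname{char}(k)$ and every pair of $\ell$-isogenous $E_1/K,E_2/K$, the hypothesis $0<v(j(E_1))<15v(2)$ (resp.\ $<3v(3)$, $<3v(5)$) forces $v(j(E_1))=v(j(E_2))$. For the ``moreover'' part of (b) one keeps $d=\ell$ prime with $\ell\equiv1\bmod 3$ and uses the bound $\tfrac92 v(3)$.

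\emph{The valuation comparison.} Assume $\Phi_\ell(j_1,j_2)=0$ with $j_i=j(E_i)$, set $v_1=v(j_1),v_2=v(j_2)$, and suppose for contradiction that $v_1\neq v_2$ while $0<v_1<15v(2)$. Write out
\[
j_1^{\ell+1}+j_2^{\ell+1}+\sum_{0\le m,n\le\ell}a_{m,n}j_1^{m}j_2^{n}=0 .
\]
Conjecture~\ref{conj.div}(a) gives $v(a_{m,n})\ge 15(\ell+1-m-n)v(2)$ when $m+n<\ell+1$, and $v(a_{m,n})\ge 0$ always since $a_{m,n}\in\mathbb{Z}$. I claim exactly one monomial on the left, namely $j_i^{\ell+1}$ with $v_i=\min(v_1,v_2)$, has valuation $(\ell+1)\min(v_1,v_2)$, all others being strictly larger; this contradicts the identity. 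The verification is a short case analysis. If $\min=v_1<v_2$: the term $j_2^{\ell+1}$ and every $a_{m,n}$-term with $m+n\ge\ell+1$ (which necessarily has $n\ge1$) have valuation $>(\ell+1)v_1$ because $v_2>v_1>0$; and for $c:=\ell+1-m-n\ge1$,
\[
v(a_{m,n}j_1^m j_2^n)\ \ge\ 15c\,v(2)+mv_1+nv_2\ \ge\ (\ell+1)v_1+c\bigl(15v(2)-v_1\bigr)\ >\ (\ell+1)v_1,
\]
using $v_2\ge v_1$ in the middle and $v_1<15v(2)$ at the end. If $\min=v_2<v_1$ with $v_2>0$, the same computation applies with the roles of $v_1,v_2$ interchanged (legitimate since $v_2<v_1<15v(2)$). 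If $v_2\le 0$, a cruder estimate isolates $j_2^{\ell+1}$: any other monomial either contains a factor $j_1^m$ with $m\ge1$, hence has valuation $\ge mv_1>0\ge(\ell+1)v_2$, or equals $a_{0,n}j_2^{n}$ with $0\le n\le\ell$, hence valuation $\ge 15(\ell+1-n)v(2)>0$; alternatively one may note that $v_1>0$ means $E_1$ has potential good reduction, hence so does the isogenous $E_2$, forcing $v_2\ge 0$. Parts (b) and (c) are verbatim identical after replacing $15v(2)$ by $3v(3)$ and by $3v(5)$ and invoking Conjecture~\ref{conj.div}(b),(c); in the ``moreover'' case of (b) one uses $v(a_{m,n})\ge\lceil\tfrac92 c\rceil v(3)\ge\tfrac92 c\,v(3)$ and the bound $\tfrac92 v(3)$.

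\emph{Main obstacle.} There is no deep difficulty here; the only point needing care is the bookkeeping in the case analysis — certifying that the minimal-valuation monomial is genuinely unique, in particular handling the monomials with $m+n\ge\ell+1$ (where Conjecture~\ref{conj.div} asserts nothing, but $v(a_{m,n})\ge0$ suffices) and the boundary possibility $\min(v_1,v_2)\le 0$ — together with stating the prime-degree reduction correctly (the multiplication-by-$a$ factor and the cyclic factor).
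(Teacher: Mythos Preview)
Your argument is correct and follows essentially the same route as the paper: reduce to prime degree, then use $\Phi_\ell(j_1,j_2)=0$ together with the conjectured lower bounds on $v(a_{m,n})$ to isolate a unique monomial of minimal valuation. Two small remarks: the paper streamlines the case analysis by first relabelling via the dual isogeny so that $v(j_1)\le v(j_2)$, which avoids your separate treatment of $\min(v_1,v_2)=v_2$ and $v_2\le 0$ (and in your ``cruder estimate'' for $v_2<0$ you silently drop the contribution $n v_2$, though your alternative potential--good--reduction argument renders that case vacuous anyway); and the paper explicitly records the equicharacteristic case by passing to $\widetilde\Phi_\ell\in\mathbb F_p[X,Y]$, which you do not mention, although the same inequalities carry over verbatim.
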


\begin{proof}
	We only prove that Conjecture \ref{conj.div} (a) implies Conjecture \ref{conj.eqvaluation} (a). The other statements can be proved similarly. Let $j_1:=j(E_1)$ and $j_2:=j(E_2)$. By possibly relabeling and considering the dual isogeny, assume that $v(j_1)\le v(j_2)$. By Proposition 4.12 and Corollary 4.11 in \cite{SilvermanArith}, every isogeny of degree greater than $1$ decomposes as the composition of isogenies of prime degrees. So it is enough to consider the case where $d$ is prime. Assume that $d$ is a prime. We first work with mixed characteristic $2$. Since there exists an isogeny of degree $d$ from $E_1/K$ to $E_2/K$, we know that $\Phi_d(j_1,j_2)=0$ (see Theorem 5 in Ch. 5 of \cite{Lang}).

	Assume that part (a) of Conjecture \ref{conj.div} is true so that $15v(2)(d+1-m-n)\le v(a_{m,n})$ when $d+1\ge m+n$. Assume for the sake of contradiction that $v(j_1)<v(j_2)$. Since $v(j_1)< 15v(2)$, we claim that 
	\[v(j_1^{d+1})<v(a_{m,n}j_1^mj_2^n)\] 
	for each $0\le m,n\le d+1$ such that $(m,n)\ne (d+1,0)$. Indeed, if $m+n< d+1$, then
	\begin{align*}
		(d+1)v(j_1)&=(d+1-m-n)v(j_1)+(m+n)v(j_1)\\
		&<(d+1-m-n)15v(2)+mv(j_1)+nv(j_2)\le v(a_{m,n})+mv(j_1)+nv(j_2).
	\end{align*}
	
	If $m+n>d+1$, then $$(d+1)v(j_1)<(m+n)v(j_1)\le v(a_{m,n})+mv(j_1)+nv(j_2).$$
	
	If $m+n=d+1$ and $n\ne 0$, then
	
	\[(d+1)v(j_1)=(m+n)v(j_1)<v(a_{m,n})+mv(j_1)+nv(j_2).\] 
	
	Hence the claim is true. It follows that $v(j_1^{d+1})=v(\sum\limits_{0\le m,n\le d+1}a_{m,n}j_1^mj_2^n)= v(\Phi_d(j_1,j_2))$ where $\Phi_d(j_1,j_2)=0$. However $v(j_1)<15v(2)$ by the assumption in Conjecture \ref{conj.eqvaluation} (a). This is a contradiction. So $v(j_1)=v(j_2)$.

	Now we consider the equicharacteristic $2$ case. Let $\widetilde{\Phi}_d(X,Y)\in\mathbb{F}_2[X,Y]$ denote the $d$-th classical modular polynomial $\Phi_d(X,Y)\in\mathbb{Z}[X,Y]$ modulo $2$. Since there exists an isogeny of degree $d$ from $E_1/K$ to $E_2/K$, we know that $\widetilde{\Phi}_d(j_1,j_2)=0$ (see \cite{Moreno} p. 202). By the same arguments as in the mixed characteristic $2$ case, the conclusions follow.  
\end{proof}

\begin{proposition}
	Conjecture \ref{conj.eqvaluation} is true for $d\le 400$.
\end{proposition}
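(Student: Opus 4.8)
The plan is to combine the two preceding results: Proposition~\ref{prop.verif}, which verifies Conjecture~\ref{conj.div} for all primes $\ell\le\bd$, and Proposition~\ref{prop.imply}, which shows that Conjecture~\ref{conj.div} implies Conjecture~\ref{conj.eqvaluation}. The only point that needs to be observed is that the proof of Proposition~\ref{prop.imply} is \emph{local in the primes}: for an isogeny of degree $d$ it passes, via the decomposition of an arbitrary isogeny into a composite of isogenies of prime degree, to the prime-degree case, and there it invokes Conjecture~\ref{conj.div} only for that one prime. Since every prime occurring in such a decomposition of a degree-$d$ isogeny divides $d$, if $d\le\bd$ then only the instances of Conjecture~\ref{conj.div} with $\ell\le\bd$ enter, and these are exactly the ones supplied by Proposition~\ref{prop.verif}.

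Concretely, I would first fix elliptic curves $E_1/K$, $E_2/K$ and an isogeny of degree $d\le 400$ coprime to $\operatorname{char}(k)$, and factor it, as in the proof of Proposition~\ref{prop.imply}, as $E_1=F_0\to F_1\to\cdots\to F_r=E_2$ with each $F_i\to F_{i+1}$ of prime degree $p_i\mid d$, so that $p_i\le 400$ and $p_i\ne\operatorname{char}(k)$ by the coprimality hypothesis. For each individual link, the argument of Proposition~\ref{prop.imply}, applied with the already-verified case $\ell=p_i$ of Conjecture~\ref{conj.div}, gives: if $v(j(F_i))$ lies in the relevant open interval (that is, $(0,15v(2))$ in part~(a), and correspondingly for parts (b) and (c)), then $v(j(F_i))=v(j(F_{i+1}))$.

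It then remains to walk down the chain. Assuming the hypothesis of Conjecture~\ref{conj.eqvaluation}, e.g.\ $0<v(j(E_1))<15v(2)$ in part~(a), an easy induction on $i$ shows that $v(j(F_0))=\cdots=v(j(F_i))$ stays in that interval for every $i$: the inductive step applies the link statement at $F_i$, which is legitimate because $v(j(F_i))=v(j(F_0))$ lies in the required range. Taking $i=r$ gives $v(j(E_1))=v(j(F_r))=v(j(E_2))$, which is the assertion of Conjecture~\ref{conj.eqvaluation}(a); parts (b) and (c) follow verbatim with the corresponding bounds. I do not anticipate any genuine difficulty here: the entire analytic content lives in Theorem~\ref{thm.form}, Proposition~\ref{prop.verif}, and Proposition~\ref{prop.imply}, and what remains is the bookkeeping above, the one substantive remark being that a degree $d\le 400$ forces all prime divisors of $d$, hence all instances of Conjecture~\ref{conj.div} that are used, to be $\le 400$.
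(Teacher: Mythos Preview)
Your proposal is correct and follows exactly the paper's approach: the paper's proof is the single line ``This follows from Propositions~\ref{prop.verif} and~\ref{prop.imply},'' and you have simply unpacked the reduction to prime degree (already carried out inside the proof of Proposition~\ref{prop.imply}) together with the observation that every prime divisor of $d\le 400$ is itself $\le 400$. Your explicit induction along the chain $F_0\to\cdots\to F_r$ makes precise the step the paper leaves implicit when it says ``it is enough to consider the case where $d$ is prime.''
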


\begin{proof}
	This follows from Propositions \ref{prop.verif} and \ref{prop.imply}.
\end{proof}

\bibliographystyle{plain}
\nocite{*}
\bibliography{ref_mod.bib}

\end{document}